\newtheorem{thm}{Theorem}[section]
\newtheorem{lem}[thm]{Lemma}
\newtheorem{cor}[thm]{Corollary}
\newtheorem{notation}[thm]{Notation}
\newcommand{\cut}{\backslash\backslash}
\theoremstyle{remark}
\newtheorem{remark}[thm]{Remark}
\theoremstyle{definition}
\newtheorem{definition}[thm]{Definition}
\begin{document}

\title[Knot genus in a fixed 3-manifold]{The computational complexity of knot genus in a fixed 3-manifold}
\author{Marc Lackenby, Mehdi Yazdi}
\address{Mathematical Institute, University of Oxford, \newline Woodstock Road, Oxford OX2 6GG, United Kingdom}

\address{Department of Mathematics, King's College London, \newline Strand, London WC2R 2LS, United Kingdom}

\maketitle

\begin{abstract}
We show that the problem of deciding whether a knot in a fixed closed orientable 3-dimensional manifold bounds a surface of genus at most $g$ is in \textbf{co-NP}. This answers a question of Agol, Hass, and Thurston in 2002. Previously, this was known for rational homology 3-spheres, by the work of the first author. 
\end{abstract}

\section{Introduction}

Let $M$ be a compact 3-manifold, and let $K$ be a knot inside $M$. Since the work of Dehn in 1910 \cite{dehn1910topologie}, deciding whether $K$ can be unknotted has been a major question in low-dimensional topology. Dehn formulated the word and the isomorphism problems for groups in an attempt to solve this question. (The isomorphism problem was stated by Tietze \cite{tietze1908topologischen} in 1908 as well.) This in turn led to Novikov's discovery of the undecidability of the word problem for finitely presented groups \cite{novikov1955algorithmic} and the undecidability of the isomorphism problem for finitely presented groups by Adian \cite{Adian1957unsolvability} and Rabin \cite{rabin1958unsolvability}. Haken \cite{haken1961theorie} was the first person to prove that the unknot recognition problem is decidable using the theory of normal surfaces, introduced previously by Kneser \cite{kneser1929geschlossene}. 

Seifert defined the \emph{genus} of a knot in the 3-sphere \cite{seifert1935geschlecht}. Consider all connected, compact, embedded, orientable surfaces in $M$ whose boundary coincides with $K$, and let the genus, $g(K)$, be the minimum genus of the surfaces in this family. If there is no such surface, then we define $g(K)= \infty$ in this case. An easy observation is that $g(K) < \infty$ if and only if $K$ represents the trivial element in the first homology group $H_1(M; \mathbb{Z})$. Furthermore, $g(K) = 0$ if and only if $K$ is the unknot. 

Therefore, one of the most basic decision problems in low-dimensional topology is \textsc{\mbox{3-manifold} knot genus}, defined as follows: given a knot $K$ in a compact 3-manifold $M$ and a non-negative integer $g$, is the genus of $K$ less than or equal to $g$? The manifold $M$ is provided via a triangulation in which $K$ is a specified subcomplex. Agol, Hass and Thurston \cite{agol2006computational} proved that this problem is \textbf{NP}-complete. A consequence is that if \textsc{3-manifold knot genus} were to be in \textbf{co-NP}, then \textbf{NP} $=$ \textbf{co-NP}, contradicting a basic conjecture in complexity theory. 

It is natural to ask whether the difficulty of \textsc{3-manifold knot genus} is a consequence of the fact that $K$ and $M$ can both vary. What if we fix the manifold $M$, and only allow $K$ to vary? In \cite{agol2006computational}, Agol, Hass and Thurston asked about the computational complexity of this problem. The specific case where $M$ is the 3-sphere was addressed by the first author. He showed \cite[Theorem 1.3]{lackenby2016efficient} that, in this restricted setting, deciding whether a knot has genus less than or equal to $g$ \emph{is} in \textbf{co-NP}. More generally, if we are given a triangulation of a rational homology 3-sphere $M$, a knot $K$ as a subcomplex and an integer $g$, then the question `is $g(K)$ less than or equal to $g$?' lies in \textbf{co-NP}.

Let $N(K)$ be a tubular neighbourhood of $K$ with interior $N^\circ(K)$. The reason why knots in rational homology 3-spheres seem to be so much more tractable than in general 3-manifolds is that, in this situation, there can be only one possible homology class in $H_2(M - N^\circ(K), \partial N(K))$, up to sign, for a compact oriented spanning surface. This suggests that knots in more complicated 3-manifolds $M$ might be difficult to analyse, since as soon as $b_1(M) \geq 1$, there may be infinitely many possibilities for the homology class of a spanning surface. However, the main result of this paper is that knot genus in a \emph{fixed closed orientable 3-manifold} lies in \textbf{co-NP}. In particular, although knot genus in 3-manifolds is $\textbf{NP}$-hard, our result implies that, conditional on $\textbf{NP} \neq \textbf{co-NP}$, this is not the case when the ambient closed orientable 3-manifold is fixed.

In order to state this result more precisely, we need to explain how the knots $K$ in $M$ are presented. Any closed orientable 3-manifold is obtained by integral surgery on a framed link $L$ in the 3-sphere \cite{lickorish1962representation, wallace1960modifications}. When $M$ is closed, we fix such a surgery description of $M$, by fixing a diagram $D$ for $L$ where the framing of $L$ is diagrammatic framing and this specifies the surgery slopes.  We specify knots $K$ in $M$ by giving a diagram for $K \cup L$ that contains $D$ as a sub-diagram. The \emph{total number of crossings} of $K$ is defined as the number of crossings in this diagram between $K$ and itself and between $K$ and $L$.

\medskip
\noindent \textbf{Problem}: \textsc{Knot genus in the fixed closed orientable 3-manifold $M$}.\\ 
\emph{Input}: A diagram of $K \cup L$ that contains $D$ as a subdiagram, and an integer $g \geq 0$ in binary.\\
\emph{Input size}: Sum of the number of digits of $g$ in binary and the total number of crossings of $K$.\\  
\emph{Question}: Is the genus of $K$ less than or equal to $g$?\\ 

Strictly speaking, there are infinitely many decision problems here, one for each \mbox{3-manifold} $M$ and surgery diagram $D$.

\begin{thm}
Let $M$ be a closed, orientable 3-manifold given by integral surgery on a framed link in the 3-sphere. The problem \textsc{Knot genus in the fixed closed orientable 3-manifold $M$} lies in \textbf{co-NP}.
\label{main}
\end{thm}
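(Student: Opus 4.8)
The plan is to produce, for a YES instance, a polynomial-size certificate establishing simultaneously that $g(K)\le g$ and $g(K)\ge g$, reducing as much as possible to the first author's treatment of knots in rational homology spheres \cite{lackenby2016efficient}. Write $X=M\setminus N^\circ(K)$; from the given diagram of $K\cup L$ one builds in polynomial time a triangulation of $X$ with $O(c)$ tetrahedra, where $c$ is the total number of crossings of $K$. Dispose first of the degenerate cases. Whether $K$ is null-homologous in $H_1(M;\mathbb{Z})$ is decidable in polynomial time from the linking matrix of $L$ and the numbers $\mathrm{lk}(K,L_i)$; if it is not, then $g(K)=\infty\neq g$. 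When it is, an adaptation of Seifert's algorithm to the diagram, together with the surgery description, produces a spanning surface for $K$ of genus $O(\mathrm{poly}(c))$, which may be written down explicitly; so if $g$ exceeds this bound the instance is again NO with a short certificate. Hence we may assume $K$ is null-homologous and $g\le\mathrm{poly}(c)$, so that a genus-$g$ surface has Euler characteristic of polynomial size. The upper bound $g(K)\le g$ is then certified by presenting a connected orientable spanning surface for $K$ of genus $g$ as a normal surface in the triangulation of $X$, encoded by its vector of normal coordinates in binary, from which connectedness, the boundary slope, and the genus are checked in polynomial time.

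The main obstacle is that when $b_1(M)\ge 1$ the set of homology classes in $H_2(X,\partial N(K))$ carried by spanning surfaces of $K$ is an infinite coset of the image of $H_2(M;\mathbb{Z})$, so a priori a minimal-genus surface, and hence the normal surface above, could have arbitrarily large normal coordinates. The key step is therefore a polynomial bound $B=B(c)$ such that some minimal-genus spanning surface of $K$ --- and, if one exists, some spanning surface of genus $<g$ --- may be taken in a homology class of size at most $B$, equivalently meeting each surgery solid torus in at most $B$ meridian discs. Fix a reference spanning surface obtained from the Seifert's-algorithm surface by making it disjoint from the surgery tori; its class $c_0$ is polynomially bounded. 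For $\alpha\in H_2(M;\mathbb{Z})$, the least genus of a spanning surface in the class $c_0+\alpha$ is, up to an additive constant, half the Thurston norm $\|c_0+\alpha\|$, and $\alpha\mapsto\|c_0+\alpha\|$ is convex on $H_2(M;\mathbb{R})$. Since $M$ is fixed, its Thurston-norm ball is a fixed rational polytope, so this convex function is proper modulo the radical of the norm and attains its minimum inside a ball of radius controlled by $\|c_0\|$, hence by $c$. What remains is to rule out that a genus-minimizing surface could be forced out of this ball; such an excursion is possible only along directions in which the Thurston norm vanishes --- classes carried by spheres and tori --- and here the fixed prime and JSJ decompositions of $M$ enter, a constant of the problem, showing that no such excursion can lower the genus. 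This yields $B$, and in particular only polynomially many homology classes are relevant.

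It remains to certify $g(K)\ge g$. For each of the polynomially many classes $c$ of size at most $B$, the certificate includes a proof that no spanning surface of $K$ in class $c$ has genus less than $g$; and, from the fixed Thurston-norm polytope together with the polynomial bound on $c_0$, a short verification that every class of size exceeding $B$ has Thurston norm at least $2g-1$ and so carries no surface of genus $<g$. For a single fixed homology class, the required certificate is exactly of the kind used in the rational-homology-sphere case \cite{lackenby2016efficient}: one passes to the sutured manifold associated with $X$ and that class, and presents a sutured manifold hierarchy --- a bounded-length sequence of decomposing normal surfaces, each normal for a triangulation derived from the previous stage and of polynomially bounded size, terminating in a disjoint union of balls --- whose existence, by Gabai's work on sutured manifolds and foliations, is equivalent to the relevant sutured manifolds being taut, hence to the asserted lower bound on the genus in that class; the book-keeping that keeps every normal-coordinate vector polynomially bounded is as in \cite{lackenby2016efficient}. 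Assembling the upper-bound surface, the homology bound $B$, and the per-class lower-bound certificates yields a polynomial-size certificate for $g(K)=g$, proving Theorem \ref{main}. I expect the genuinely new work, beyond adapting the rational-homology-sphere argument, to lie in the key step of the second paragraph --- the polynomial bound on the homology class and, within it, the treatment of the directions in which the Thurston norm vanishes.
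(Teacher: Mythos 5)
There is a genuine gap, and it sits exactly where you locate ``the genuinely new work''. Your second paragraph argues with the Thurston norm of the \emph{fixed} manifold $M$: you say its norm ball ``is a fixed rational polytope'' and that the degenerate directions are controlled by ``the fixed prime and JSJ decompositions of $M$ \dots\ a constant of the problem''. But the norm that computes $g(K)$ is the Thurston norm of $X = M - N^\circ(K)$ on $H_2(X,\partial X)$, restricted to the affine set $\{h : \partial h = \pm[\ell]\}$; this norm, its unit ball, and in particular its degenerate subspace $W \subset H^1(X;\mathbb{R})$ all depend on $K$ (for instance on essential tori and annuli in the knot exterior that have nothing to do with the JSJ decomposition of $M$). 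Pushing forward to $M$ only gives $x_X \geq x_M \circ i_*$, which bounds nothing in directions that are degenerate for $M$ but not for $X$, and says nothing about $W$. Consequently your claim that ``only polynomially many homology classes are relevant'' fails whenever $W \cap \ker\partial$ contains a nonzero integral class: then infinitely many integral classes have boundary $[\ell]$ and the same norm, and no finite enumeration is possible. The paper's proof is organised precisely around these two difficulties. It non-deterministically guesses and \emph{certifies the entire Thurston norm ball of $X$} (Theorem \ref{thurston ball}); this requires an efficiently encoded cocycle basis with $\sum_i C_{\mathrm{nat}}(\phi_i)=O(c^2)$ (Theorem \ref{basis}) in order to bound the number of vertices and facets (Theorem \ref{number of faces}), a Tollefson--Wang normal-surface basis for $W$ (Theorem \ref{Thm:BasisForW}), and a decomposition along normal spheres and discs to reach the irreducible, boundary-irreducible case. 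Then, instead of enumerating classes, it minimises the norm over $(W + \mathrm{Cone}(F)) \cap H^1(X;\mathbb{Z}) \cap \{\partial h = \pm[\ell]\}$ for each facet $F$ by Mixed Integer Programming, which Lenstra's algorithm solves in polynomial time because the number of integer variables is the fixed constant $b_1(X) \leq b_1(M)+1$.

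The parts of your proposal that do work are those shared with the paper: the $O(c)$-tetrahedron triangulation of $X$, the homological-triviality check, and the certification of the norm of a single class by a sutured manifold hierarchy, which is exactly Theorem \ref{lackenby} and is also invoked by the paper as a black box. But without a certified picture of the norm ball of $X$ itself and a mechanism for handling the $W$-directions, the lower-bound half of your certificate cannot be assembled.
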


This can be generalised to compact orientable 3-manifolds with non-empty toroidal boundary, as follows. Any compact orientable 3-manifold $M$ with toroidal boundary can be specified by means of the disjoint union of a link $\Gamma$ and a framed link $L$ in the 3-sphere. The manifold $M$ is obtained from $S^3$ by removing an open regular neighbourhood of $\Gamma$ and performing surgery along $L$. We fix a diagram $D$ for $\Gamma \cup L$, where again the surgery slopes on $L$ agree with the diagrammatic framing. We can then specify a knot $K$ in $M$ by giving a diagram for $K \cup \Gamma \cup L$ that contains $D$ as a sub-diagram. Again, the \emph{total crossing number} of $K$ is the number of crossings in this diagram between $K$ and itself and between $K$ and $\Gamma \cup L$. We say that \textsc{Knot genus in the fixed 3-manifold $M$} is the decision problem asking whether the genus of $K$ is less than or equal to a given non-negative integer.

\begin{thm}
Let $M$ be a compact, orientable 3-manifold with toroidal boundary given as above. The problem \textsc{Knot genus in the fixed 3-manifold $M$} lies in \textbf{co-NP}.
\label{main:boundary}
\end{thm}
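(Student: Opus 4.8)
The plan is to run the argument used for Theorem~\ref{main} essentially verbatim, with a handful of modifications to accommodate $\partial M$. It is worth stressing at the outset that one cannot simply \emph{reduce} Theorem~\ref{main:boundary} to Theorem~\ref{main}: filling in $\partial M$ by handlebodies (or, when $\partial M$ is a union of tori, by solid tori) produces a closed manifold $\widehat M \supset M$, but a minimal-genus spanning surface for $K$ in $\widehat M$ may run through the filling region and have strictly smaller genus than any spanning surface lying in $M$, so $g_{\widehat M}(K)$ need not equal $g_M(K)$. Instead one works directly with the exterior $X = M \setminus N^\circ(K)$. As in the closed case, the first step is to convert the given diagram of $K \cup \Gamma \cup L$ into a triangulation $\mathcal T$ of $X$ whose number of tetrahedra is bounded by a polynomial in the total crossing number of $K$; this uses that $M$, the graph $\Gamma$, the link $L$ and the diagram $D$ are all fixed, so only the new crossings involving $K$ contribute to the size of $\mathcal T$. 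Now $\partial X$ consists of the torus $\partial N(K)$ together with the fixed surface $\partial M$, and $g(K)$ is read off from the maximal Euler characteristic of a connected orientable spanning surface, which (as in the closed case) is governed by a Thurston-norm-type minimization over the coset of classes in $H_2(X,\partial X;\mathbb Z)$ that restrict to the longitude on $\partial N(K)$ and to $0$ on $\partial M$.

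The certificate is the same package as in the proof of Theorem~\ref{main}: a minimal-genus spanning surface $S$, presented as a normal surface with respect to $\mathcal T$ whose vector of normal coordinates is written in binary (hence of polynomial bit-length, even though $S$ may have exponential weight), together with a sutured manifold hierarchy for the sutured manifold obtained by decomposing $X$ along $S$, terminating in a disjoint union of balls and certifying that $S$ is norm-minimizing. The one genuinely new ingredient is the sutured structure to place on $\partial M$: one fixes, once and for all as part of the data attached to $M$, a system of annuli $\gamma_M \subset \partial M$ cutting $\partial M$ into subsurfaces $R_\pm$ that are Thurston-norm minimizing in their homology classes, and one adjoins $\gamma_M$ to the suture on $\partial N(K)$. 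With this choice, the existence of a taut hierarchy beginning with the decomposition along $S$ certifies that $S$ realizes the minimum over the relevant coset, hence that $g(K)$ equals the genus of $S$. One must also deal with the possibility that $M$, and therefore $X$, is reducible or has $2$-sphere boundary components, and that $\partial M$ is compressible; since $M$ is fixed, each of these issues is removed by a bounded amount of preprocessing (capping off a fixed family of spheres, compressing $\partial M$ along a fixed family of disks) that changes $X$ in a controlled way and does not affect any of the complexity estimates.

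The verification that such a certificate is valid is carried out exactly as for Theorem~\ref{main}: in polynomial time one checks that each recorded coordinate vector describes a connected normal surface with the prescribed boundary behaviour, that every decomposition occurring in the hierarchy is admissible, that the terminal pieces are balls, and that the alternating sum of Euler characteristics is consistent with the claimed value $g$; all of this amounts to arithmetic on integers of polynomial bit-length. The main obstacle, and the place where the argument of Theorem~\ref{main} has to be revisited rather than quoted, is the proof that a minimal-genus spanning surface for $K$ in $M$, and every decomposing surface appearing in its taut hierarchy, may be taken to be normal in $\mathcal T$ with normal coordinates of polynomially-bounded bit-size, now in the presence of the extra boundary $\partial M$. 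This rests on re-deriving, for $X$ with its enlarged boundary, the bounds on the length of the sutured manifold hierarchy and on the weights (equivalently, the bit-complexity) of the decomposing surfaces used in the closed case; once those bounds are in place, the proof concludes as before.
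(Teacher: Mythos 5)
There is a genuine gap, and it is precisely the difficulty that this paper exists to overcome. Your certificate --- a normal spanning surface $S$ together with a taut sutured manifold hierarchy for $X \cut S$ --- certifies only that $S$ is Thurston-norm-minimizing \emph{in its own homology class} $[S] \in H_2(X, \partial X;\mathbb{Z})$. That is the content of Theorem \ref{lackenby} (the single-class problem from \cite{lackenby2016efficient}), and it suffices when $M$ is a rational homology sphere, where the class of a spanning surface is determined up to sign. But $g(K)$ is the minimum of $\chi_-$ over the entire coset $\{h \in H_2(X,\partial X;\mathbb{Z}) : \partial h = \pm[\ell]\}$, and as soon as $b_1(M) \geq 1$ this coset is infinite: a different class in it could contain a spanning surface of strictly smaller genus, and nothing in your certificate rules this out. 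Gabai's theorem does not certify minimality ``over the relevant coset,'' only within $[S]$. You do acknowledge that the minimization is over a coset in your first paragraph, but the certificate you then propose never addresses the other classes.

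Relatedly, you misremember the proof of Theorem \ref{main}: it is not the normal-surface-plus-hierarchy certificate, and the paper proves Theorems \ref{main} and \ref{main:boundary} by one and the same argument (the closed case is just $\Gamma = \emptyset$). What the paper actually does is: (i) build a triangulation of $X$ with $O(c)$ tetrahedra together with an integral basis of simplicial $1$-cocycles with $\sum_i C_{\mathrm{nat}}(\phi_i) = O(c^2)$ (Theorem \ref{basis} --- this is where the fixed diagram $D$ is used); (ii) nondeterministically compute and verify the \emph{entire} Thurston norm ball of $H^1(X;\mathbb{R})$ (Theorem \ref{thurston ball}), which is feasible because Theorem \ref{number of faces} bounds its number of vertices and facets polynomially in terms of the $C_{\mathrm{nat}}$ of the basis; and then (iii) for each facet $F$, minimize the linear functional giving the norm over the integral points of $(W + \mathrm{Cone}(F))$ with boundary $\pm[\ell]$, which is an instance of mixed integer programming with a bounded number of integer variables, solvable in polynomial time by Lenstra's algorithm. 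The minimum over all facets is $x$ of the minimizing class, hence the genus. If you want to salvage a hierarchy-based certificate, you must add a mechanism that simultaneously bounds the norm from below on all the infinitely many competing classes; the norm ball plus integer programming is that mechanism, and your proposal contains no substitute for it.
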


For a connected orientable surface $S$, define the \emph{negative part of the Euler characteristic} as
\[\chi_-(S): = \max \{ 0 , -\chi(S) \}. \]
If $S$ has multiple components, define $\chi_-(S)$ as the sum of the corresponding values for the components of $S$.

\begin{definition}[Thurston complexity $c_\mathrm{Th}(K)$ for a knot $K$]
	Let $M$ be a compact orientable 3-manifold, and $K$ be a homologically trivial oriented knot in $M$. Set $X := M - N^\circ(K)$ as the complement of a tubular neighbourhood of $K$ in $M$. Denote by $\ell $ the longitude of $K$, which is defined as the boundary of any Seifert surface for $K$ in $X$. Define the \emph{Thurston complexity $c_\mathrm{Th}(K)$ for $K$} as 
	\begin{align*}
		c_\mathrm{Th}(K): = \min \{ \chi_-(S) \hspace{1mm} | \hspace{1mm} & S \text{ is a compact oriented properly embedded surface in }X \\ & \text{ with } [\partial S] = [\ell] \in H_1(\partial X ; \mathbb{Z})   \}.
	\end{align*}
	\label{def:Thurston complexity}
\end{definition}

We consider the decision problem \textsc{Thurston complexity for a knot in the fixed 3-manifold $M$} defined as follows. Here $M$ is compact and orientable, and can have (possibly non-toroidal) boundary. We can construct such manifolds by removing an open regular neighbourhood of a \emph{graph} $\Gamma$ from $S^3$ and then performing surgery on a link $L$ in the complement of $\Gamma$. Thus, we specify $M$ by means of a diagram $D$ for $\Gamma \cup L$, where again the surgery slopes on $L$ are given by diagrammatic framing.

\medskip
\noindent \textbf{Problem}: \textsc{Thurston complexity for a knot in the fixed 3-manifold $M$}.\\ 
\emph{Input}: A diagram of $K \cup \Gamma \cup L$ that contains $D$ as a subdiagram, and an integer $g \geq 0$ in binary.\\
\emph{Input size}: Sum of the number of digits of $g$ in binary and the total number of crossings of $K$.\\  
\emph{Question}: Is $c_\mathrm{Th}(K)$ less than or equal to $g$?\\ 

It is easy to show that when $M$ has toroidal boundary, $c_\mathrm{Th}(K)$ and $g(K)$ determine each other (Lemma \ref{computing genus from Thurston norm}). Theorem \ref{main:boundary} will then be a consequence of the following result.

\begin{thm}
	Let $M$ be a compact, orientable 3-manifold given by a fixed diagram $D$ for $\Gamma \cup L$, where $M$ is obtained from $S^3$ by removing an open regular neighbourhood of the graph $\Gamma$ and performing surgery on the framed link $L$. The problem \textsc{Thurston complexity for a knot in the fixed 3-manifold $M$} lies in \textbf{co-NP}.
	\label{thm:Thurston complexity for a knot}
\end{thm}



\subsection{Ingredients of the proof}

\medskip
(1) One of the key technical tools in the paper is the use of different measures of complexity for various objects. We introduce the relevant terminology now. 

For an integer $n$, let $C_{\mathrm{una}}(n) = |n|$ be the \emph{unary complexity} of $n$, and let $C_{\mathrm{dig}}(n)$ be the number of digits of $n$ when expressed in binary. In the case of negative $n$, we view the minus sign at the front as an extra digit. For a list of integers $(n_1, \cdots, n_k)$, let $C_{\mathrm{una}}(n_1, \cdots, n_k)$ be $\sum_i C_{\mathrm{una}}(n_i)$. Similarly, let $C_{\mathrm{dig}}(n_1, \cdots, n_k)$ be $\sum_i C_{\mathrm{dig}}(n_i)$. 
For a matrix $A$ with integer entries $A_{ij}$, let $C_{\mathrm{una}}(A)$ be $\sum_{ij} C_{\mathrm{una}}(A_{ij})$ and let $C_{\mathrm{dig}}(A)$ be $\sum_{ij} C_{\mathrm{dig}}(A_{ij})$. 
For a rational number $p/q$, with $p$ and $q$ in their lowest terms, let $C_{\mathrm{una}}(p/q)= C_{\mathrm{una}}(p) + C_{\mathrm{una}}(q)$ and let $C_{\mathrm{dig}}(p/q)= C_{\mathrm{dig}}(p) + C_{\mathrm{dig}}(q)$. 

The $C_{\mathrm{dig}}$ notions of size are the most natural ones and the ones that are most widely used in complexity theory, because they reflect the actual amount of memory required to store the number, list or matrix. However, we will also find the $C_{\mathrm{una}}$ versions useful.

\medskip
(2) Given a compact orientable 3-manifold $M$, the \emph{Thurston norm} is a semi-norm on $ H_2(M, \partial M; \mathbb{R})$, which is closely related to the notion of knot genus \cite{thurston1986norm}. See Section \ref{Sec:ThurstonNorm} for the precise definition. One of the main ingredients is the following result, proved by the first author in  \cite{lackenby2016efficient}.

\begin{thm}[Lackenby]
\textsc{Thurston norm of a homology class} is in \textbf{NP}.
\label{lackenby}
\end{thm}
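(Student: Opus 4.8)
\medskip
\noindent\textbf{Proof proposal.}
The plan is to reduce the equality $x(\alpha)=g$ to certifying the two inequalities $x(\alpha)\le g$ and $x(\alpha)\ge g$ separately, and to show that each of these lies in \textbf{NP}; since the Thurston norm takes integer values on integral classes, a verifier that guesses and checks certificates for both will decide the equality. Throughout I would work in normal surface theory with respect to the given triangulation, writing $t$ for the number of tetrahedra. The starting point is the classical fact that a norm-minimising surface representing $\alpha$ may be taken incompressible and $\partial$-incompressible with no sphere or disc components, and that such a surface can be isotoped to be normal; hence $x(\alpha)=\min\{\chi_-(S):S\text{ normal},\,[S]=\alpha\}$, where $\chi_-(S)$ is the sum of $\max(0,-\chi(S_i))$ over the components $S_i$ of $S$.

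For the inequality $x(\alpha)\le g$, the certificate I would use is a normal surface $S$, given by its $7t$ normal coordinates in binary, together with a homologous surface witnessing $[S]=\alpha$. Verification is polynomial time: one checks the matching equations, the quadrilateral conditions, that $[S]=\alpha$ in $H_2(M,\partial M;\mathbb{Z})$ by a linear computation, and that $\chi_-(S)\le g$ by a combinatorial Gauss--Bonnet count that is linear in the coordinates. The content of this step is that a minimiser of $\chi_-$ over normal surfaces in class $\alpha$ exists with coordinates of polynomial bit-size. I would argue this by fixing the quadrilateral-type sector containing some minimiser; inside that sector the candidate surfaces form a rational polyhedron whose defining inequalities have bit-size polynomial in $t$ and in the bit-sizes of $\alpha$ and $g$, and on which $-\chi$ is linear. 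After discarding trivial (in particular vertex-linking) components --- which never increase $\chi_-$ and are the only source of unboundedness of the relevant polyhedron --- standard bounds from integer linear programming yield an optimal lattice point of polynomial bit-size.

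The inequality $x(\alpha)\ge g$ is where I expect the main obstacle to be. Here one must certify that the surface produced above is genuinely norm-minimising, i.e.\ that $x(\alpha)\ge\chi_-(S)$. The natural topological route is sutured manifold theory: $S$ is norm-minimising exactly when the sutured manifold obtained by decomposing $M$ along $S$ is taut, and tautness can be witnessed by a terminating sutured manifold hierarchy, dually by a taut foliation having $S$ as a leaf, or --- via Thurston norm duality --- by a cohomology class $\phi\in H^1(M;\mathbb{Z})$ lying in the dual norm ball with $\langle\phi,\alpha\rangle=\chi_-(S)$, for instance realised as the Euler class of such a foliation carried by a branched surface. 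The trouble is that all of these objects are, a priori, arbitrarily complicated: sutured manifold hierarchies can have exponential length and Gabai's foliations need not be combinatorially small. The substantive work is therefore an \emph{efficient} version of this machinery: one must show that a norm-minimising surface always admits a certificate of tautness --- a suitably normalised branched surface together with the combinatorial and homological data witnessing that it is taut and carries the required leaf --- whose total size is polynomial in $t$ and in the bit-sizes of $\alpha$ and $g$, and that the defining conditions of such a certificate can be verified in polynomial time.

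Finally I would attend to the bookkeeping that the distinction between $C_{\mathrm{nat}}$ and $C_{\mathrm{dig}}$ is designed to handle: the topological estimates tend to come out polynomial in the magnitudes $C_{\mathrm{nat}}$ of the data rather than in their binary sizes $C_{\mathrm{dig}}$. The fix is that the answer is forced in the extreme regimes --- if $g$ exceeds the elementary upper bound on $x(\alpha)$ read off directly from the given representative of $\alpha$, then $x(\alpha)\ne g$ and one rejects at once --- so it suffices to run the certification argument in the range where $g$, and with it the relevant surfaces and branched surfaces, have magnitude polynomial in the input size.
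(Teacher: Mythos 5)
This statement is not proved in the paper at all: it is quoted verbatim from \cite{lackenby2016efficient} as an external ingredient, so there is no internal proof to compare against. Judged on its own terms, your proposal correctly reconstructs the overall architecture of Lackenby's argument --- split $x(\alpha)=g$ into the two inequalities, certify $x(\alpha)\le g$ by a normal surface of polynomially bounded coordinates, and then certify minimality --- but it does not constitute a proof, because the decisive step is left as a declared black box. You write that ``one must show that a norm-minimising surface always admits a certificate of tautness \dots whose total size is polynomial'' and that this is ``the substantive work''; that sentence is precisely the main theorem of \cite{lackenby2016efficient}, and nothing in your sketch supplies it. It does not follow from ``standard bounds'': the known obstructions you yourself list (hierarchies of a priori exponential length, foliations with no combinatorial bound) are real, and overcoming them requires the whole apparatus of that paper --- passing to handle structures, controlling the parallelity bundle of a normal surface, and proving that a taut surface admits a sutured manifold hierarchy whose total combinatorial complexity is polynomially bounded and each of whose decomposing surfaces can be verified taut in polynomial time. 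Without an argument for the existence and verifiability of such a bounded hierarchy (or of a bounded branched-surface/Euler-class witness), the lower-bound half of the certificate, and hence the \textbf{NP} membership, is unestablished.

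Two smaller points. First, in the upper-bound half, the assertion that a $\chi_-$-minimiser in the class $\alpha$ can be taken normal with coordinates of polynomial bit-size needs the standard but nontrivial chain: a taut representative exists, normalisation does not increase $\chi_-$ or change the class, and a fundamental/vertex-type bound (as in Hass--Lagarias) controls the coordinates after discarding vertex-linking components; your sector-plus-integer-programming sketch is plausible but should be pinned to these facts. Second, your closing remark about $C_{\mathrm{nat}}$ versus $C_{\mathrm{dig}}$ points in the wrong direction for this theorem: the problem as stated measures the input by $C_{\mathrm{dig}}(c)$ and $C_{\mathrm{dig}}(g)$, so one cannot assume $g$ has magnitude polynomial in the input size, and the certificate must work for $g$ given in binary. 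The $C_{\mathrm{nat}}$ bookkeeping is an issue for the \emph{present} paper's use of the theorem (Theorems \ref{thurston ball} and \ref{basis}), not for the theorem itself.
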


The decision problem \textsc{Thurston norm of a homology class} takes as its input a triangulation $\mathcal{T}$ for a compact orientable 3-manifold $M$, a simplicial 1-cocycle $c$ and an integer $g$, and it asks whether the Thurston norm of the dual of $c$ is equal to $g$. The measure of complexity of $\mathcal{T}$ is its number of tetrahedra, denoted $|\mathcal{T}|$. The measure of complexity of $c$ is $C_{\mathrm{dig}}(c)$, where we view $c$ as a list of integers, by evaluating it against all the edges of $\mathcal{T}$ (when they are oriented in some way). The measure of complexity of $g$ is $C_{\mathrm{dig}}(g)$.

\medskip
(3) Thus, one can efficiently certify the Thurston norm of the dual of a \emph{single} cohomology class. However, in principle, a minimal genus Seifert surface for the knot $K$ could be represented by one of infinitely many classes. To examine all possible classes simultaneously, one needs a good picture of the Thurston norm ball. Thurston showed that the unit ball of this norm is a, possibly non-compact, convex polyhedron \cite{thurston1986norm}. More precisely, there is a linear subspace $W \subset H_2(M, \partial M; \mathbb{R})$ consisting of homology classes of norm zero such that the unit ball of the induced norm on $H_2(M, \partial M; \mathbb{R})/W$ is a compact convex polyhedron. In what follows $b_1$ denotes the first Betti number, and $H_2(M, \partial M; \mathbb{R})$ is identified with $H^1(M; \mathbb{R})$ using Poincar\'{e} duality.

\medskip
\noindent \textbf{Problem}: \textsc{Thurston norm ball for 3-manifolds with $b_1 \leq B$}.\\ 
\emph{Input}: A triangulation $\mathcal{T}$ of a compact orientable 3-manifold $X$ with $b_1(X) \leq B$, and a list of simplicial integral cocycles $\phi_1, \cdots, \phi_b$ that form a basis for $H^1(X; \mathbb{R})$. \\
\emph{Input size}: $|\mathcal{T}| + \sum_i C_{\mathrm{una}}(\phi_i)$.\\
\emph{Output}: The output is all the information that one needs to compute the Thurston norm ball: 
\begin{enumerate}
	\item a collection of integral cocycles that forms a basis for the subspace $W$ of $H^1(X; \mathbb{R})$ with Thurston norm zero;
	\item a list $V \subset H^1(X ; \mathbb{Q})$ of points that project to the vertices of the unit ball of $H^1(X; \mathbb{R})/W$, together with a list of subsets of these vertices that form faces. The elements of $V$ are given as rational linear combinations of $\phi_1, \cdots, \phi_b$.
\end{enumerate} 

\begin{remark}
	Note that we have used the unary notion of complexity for the size of the input here. Note also that $X$ is not assumed to have toroidal boundary.
\end{remark}

\begin{thm}
Fix an integer $B \geq 0$. The problem \textsc{Thurston norm ball for 3-manifolds with $b_1 \leq B$} lies in \textbf{FNP}, where $b_1$ denotes the first Betti number.
\label{thurston ball}
\end{thm}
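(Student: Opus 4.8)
The plan is to have a witness describe the Thurston norm ball explicitly and to certify its correctness using Theorem~\ref{lackenby}. Recall that, by Thurston's theorem, the unit ball $B_x \subseteq H_2(M,\partial M;\mathbb{R})$ of the Thurston norm $x$ is a polyhedron whose polar dual $B_x^{\circ} \subseteq H^1(M;\mathbb{R})$ is the convex hull of finitely many classes $\phi_1,\dots,\phi_k$ in the lattice $H^1(M;\mathbb{Z})$; equivalently $x(a)=\max_i \langle \phi_i,a\rangle$, and the $\phi_i$ are the outward facet normals of $B_x$. Because $b_1(M)\le B$ is fixed, all of this takes place in a space of dimension at most $B$, and $H_1(M;\mathbb{Z})$ and $H_2(M,\partial M;\mathbb{Z})$, together with integral bases of their free parts, can be computed from $\mathcal{T}$ by polynomial-time linear algebra. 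The witness will be the (symmetric) list $\phi_1,\dots,\phi_k$, which determines $B_x^{\circ}$ and hence $B_x$, together with the auxiliary certificates described below; one then has to check that the list has polynomial size and that its correctness is verifiable in polynomial time.

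For the size: fix the chosen integral basis $a_1,\dots,a_n$ of the free part of $H_2(M,\partial M;\mathbb{Z})$ and use the identification $H^1(M;\mathbb{Z})\cong\mathrm{Hom}(H_2(M,\partial M;\mathbb{Z}),\mathbb{Z})$. Each facet normal $\phi$ of $B_x$ satisfies $|\langle\phi,a_j\rangle|\le x(a_j)$, since $\langle\phi,\pm a_j\rangle\le x(\pm a_j)=x(a_j)$; and $x(a_j)$ is at most the negative Euler characteristic of a normal surface dual to a $1$-cocycle representing $a_j$ of polynomially bounded $C_{\mathrm{dig}}$-complexity, so $C_{\mathrm{dig}}(x(a_j))$ is polynomial in $|\mathcal{T}|$. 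Hence each $\phi_i$ has polynomial bit-size. The step I expect to be the main obstacle is to bound the \emph{number} $k$ of facets --- and more generally to show that the full vertex--facet structure of $B_x$ admits a description of size polynomial in $|\mathcal{T}|$. This cannot follow from convex geometry alone, since a polytope of bounded dimension with vertices of bounded bit-size can have super-polynomially many faces; one must instead exploit the structure of the Thurston norm, together with the hypothesis $b_1\le B$, probably via a careful analysis of norm-minimising normal surfaces.

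Granting such a bound, the verification proceeds as follows. Given a candidate symmetric list $\phi_1,\dots,\phi_k\in H^1(M;\mathbb{Z})$, form $P:=\{\,a:\langle\phi_i,a\rangle\le 1\ \text{for all }i\,\}$ and compute its vertices and facets deterministically in polynomial time (the dimension being at most $B$). The witness additionally supplies (i) for each vertex $v$ of $P$, a positive integer $d$ with $dv$ integral and a normal surface representing $dv$ of complexity at most $d$ (which exists, of polynomial size, exactly when $x(v)\le 1$); and (ii) for each facet $F$ of $P$, with outward normal $\phi$, an integral class $a_F$ in the interior of the cone over $F$ together with a certificate --- supplied by the \textbf{NP} algorithm of Theorem~\ref{lackenby} --- that $x(a_F)=\langle\phi,a_F\rangle$. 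The certificates of type (i) give $x\le 1$ at every vertex of $P$, hence $x\le 1$ on $P$ by convexity, so $P\subseteq B_x$; they also give $x\le\langle\phi,\cdot\rangle$ on the cone over each facet $F$ (since $\langle\phi,\cdot\rangle\equiv 1$ on $F$). A certificate of type (ii) then forces the convex, degree-one homogeneous function $x-\langle\phi,\cdot\rangle$, which is non-positive on the top-dimensional cone over $F$, to vanish at an interior point, hence to vanish on the whole cone; and a convex function agreeing with an affine function on a full-dimensional set dominates it everywhere, so $x\ge\langle\phi_i,\cdot\rangle$ for every $i$, i.e. $B_x\subseteq P$. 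Thus $P=B_x$ and $B_x^{\circ}=\mathrm{conv}(\phi_1,\dots,\phi_k)$, as required; conversely, if $P\neq B_x$ then either some vertex of $P$ lies outside $B_x$, killing a certificate of type (i), or some facet of $P$ fails to lie in $\partial B_x$, whence $x<\langle\phi,\cdot\rangle$ throughout the interior of the corresponding cone and no certificate of type (ii) exists. Every check runs in polynomial time --- the surfaces directly, and the type-(ii) certificates through Theorem~\ref{lackenby}, whose complexity parameters $C_{\mathrm{dig}}(a_F)$ and $C_{\mathrm{dig}}(\langle\phi,a_F\rangle)$ are polynomial by the size bounds above --- and a correct list together with its true certificates always exists. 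The case where $x$ is degenerate, so that $B_x$ is non-compact, requires only cosmetic changes, replacing vertices by minimal faces. This would place \textsc{Thurston norm ball for $b_1\le B$} in \textbf{FNP}, the one substantial ingredient being the polynomial bound on the combinatorial complexity of the norm ball.
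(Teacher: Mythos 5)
Your verification scheme is sound as far as it goes (and is a genuinely different route from the paper's, which supplies the primal vertices and faces directly and certifies completeness of the face list via a pseudo-manifold degree argument rather than reconstructing $P$ from facet normals), but you have left unproven exactly the statement that carries the weight of the theorem: the polynomial bound on the number $k$ of facets. You flag this yourself as ``the one substantial ingredient'', so the proposal as written is not a proof. The paper's resolution (Theorems \ref{lattice points} and \ref{number of faces}) is short but hinges on a point your write-up obscures. By Thurston's integrality theorem the facet normals of $B_x$ are integral points of the dual unit ball; writing such a point $u$ in the basis dual to $[S_1],\dots,[S_b]$, where $S_j$ is an embedded normal surface dual to the input cocycle $\phi_j$, the coordinates are the integers $\langle u,[S_j]\rangle$, bounded in absolute value by $x([S_j])=x_s([S_j])\leq \chi_-(S_j)$ (Theorem \ref{singular-norm} is what allows evaluation against these particular representatives). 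Hence $k\leq \prod_j (2\chi_-(S_j)+1)$, which is polynomial for fixed $b\leq B$ \emph{provided the integers $\chi_-(S_j)$ themselves are polynomially bounded}. This is precisely why the problem's input complexity is defined as $|\mathcal{T}|+\sum_i C_{\mathrm{nat}}(\phi_i)$, with the cocycles measured in unary: then $\chi_-(S_j)=O(C_{\mathrm{nat}}(\phi_j))$ is polynomial in the input size. Your own estimate only gives $C_{\mathrm{dig}}(x(a_j))$ polynomial, i.e.\ $x(a_j)$ possibly exponentially large; that bounds the bit-size of each facet normal but yields an exponential bound on their number, and as you correctly observe no convex-geometry argument can rescue this.

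A smaller but real omission: the degenerate case is not cosmetic in the paper's treatment. To produce and certify the required basis of the norm-zero subspace $W$, the paper uses the Tollefson--Wang result that $W\cap H^1(X;\mathbb{Z})$ is spanned by fundamental normal surfaces (Theorem \ref{Thm:BasisForW}), which requires $X$ irreducible with incompressible or toroidal boundary; a whole section is then devoted to certifying a decomposition along essential normal spheres and compression discs to reduce to that case. Your scheme could plausibly sidestep this by recovering $W$ as the common kernel of the certified facet normals and certifying $x=0$ on integral classes spanning the lineality space of $P$ via Theorem \ref{lackenby}, but this is not spelled out, and in any case it does not repair the missing facet-count bound.
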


Recall that \textbf{FNP} is the generalisation of \textbf{NP} from decision problems (where a yes/no answer is required) to function problems (where more complicated outputs might be required). A formal definition is given in Section \ref{Sec:Complexity}. 

It was known through the work of Tollefson and Wang \cite{TollefsonWang} that there is a (deterministic) algorithm to compute the unit ball of the Thurston norm using normal surfaces. Cooper and Tillmann \cite{cooper2009thurston}, and Cooper, Tillmann and Worden \cite{cooper2021thurston} gave another algorithm to compute the Thurston norm. However the computational complexity of these algorithms are not discussed in their work. 

At first sight, Theorem \ref{thurston ball} seems to lead easily to the proof of Theorem \ref{main}. However, its power is blunted by the unary notion of complexity that it uses for its input size. Thus, it only works well when $C_{\mathrm{una}}(\phi_i)$ is `small' for each $i$. That such a collection of simplicial cocycles exists in our setting is a consequence of the following surprising result.

\medskip
(4) Constructing an efficient basis for the second homology of a knot complement, for a fixed ambient manifold. 

Here, we work in the setting of compact orientable 3-manifolds with (possibly non-toroidal) boundary.

\begin{thm}
Let $M$ be a compact orientable 3-manifold given by removing an open regular neighbourhood of a (possibly empty) graph $\Gamma$ in $S^3$ and performing integral surgery on a framed link $L$ in the complement of $\Gamma$. Let $D$ be a fixed diagram for $\Gamma \cup L$ where the surgery slopes on $L$ coincide with the diagrammatic framing. Let $K$ be a homologically trivial knot in $M$, given by a diagram of $K \cup \Gamma \cup L$ that contains $D$ as a sub-diagram. Let $c$ be the total crossing number of $K$. Set $X = M - N^\circ(K)$ as the exterior of $K$ in $M $. There is an algorithm that builds a triangulation of $X$ with $O(c)$ tetrahedra, together with simplicial $1$-cocycles $\phi_1 , \cdots , \phi_b$ that form an integral basis for $H^1(X ; \mathbb{Z})$ with $\sum_i C_{\mathrm{una}}(\phi_i)$ at most $O(c^4)$. Moreover, the algorithm extends this triangulation of $X$ to a triangulation of $M$ with $O(c)$ tetrahedra, in which $K$ is simplicial. The algorithm runs in time polynomial in $c$. All the above implicit constants depend on the manifold $M$ and not the knot $K$. 
\label{basis}
\end{thm}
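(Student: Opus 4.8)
The plan is to work in the exterior of $K\cup\Gamma\cup L$ inside $S^3$, where the given diagram provides explicit control, and then to recover $X$ by Dehn filling. Write $D'$ for the given diagram of $K\cup\Gamma\cup L$; since the sub-diagram $D$ of $\Gamma\cup L$ is fixed, $D'$ has $c+O(1)=O(c)$ crossings. First I would apply the standard construction that turns a link diagram into a triangulation of the link exterior, producing in time polynomial in $c$ a triangulation $\mathcal{T}_0$ of $Y:=S^3-N^\circ(K\cup\Gamma\cup L)$ with $O(c)$ tetrahedra, arranged so that $\partial N(K)$, the $\partial N(L_j)$, and $\partial N(\Gamma)$ are subcomplexes, and so that the pieces of $\partial Y$ coming from $\Gamma\cup L$ carry triangulations of bounded size (these depend only on $D$). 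Because the surgery slopes on the $L_j$ are the diagrammatic framings, they too depend only on $D$; so I can glue a standard triangulated solid torus of bounded size onto each $\partial N(L_j)$ along its surgery slope, obtaining a triangulation $\mathcal{T}$ of $X=M-N^\circ(K)$ with $O(c)$ tetrahedra in polynomial time.

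Next I would write down the cocycles on $\mathcal{T}_0$. The group $H^1(Y;\mathbb{Z})$ is free abelian with a preferred basis of \emph{meridian classes} $\mu_c^*$: one for the meridian of $K$, one for the meridian of each $L_j$, and one for each edge of $\Gamma$ outside a chosen maximal tree (so the number of them is $b_1(\Gamma)+1+\ell=O(1)$). Each $\mu_c^*$ is Lefschetz dual to an explicit properly embedded surface in $Y$ — a Seifert-type surface for the corresponding component of $K\cup\Gamma\cup L$, punctured where the other components pass through it — which can be read off from $D'$. I would show that each $\mu_c^*$ is represented by a simplicial $1$-cocycle on $\mathcal{T}_0$ whose value on every edge is $O(1)$, so that $C_{\mathrm{nat}}(\mu_c^*)=O(c)$; this is where the compatibility of $\mathcal{T}_0$ with the diagram is used, and it is the most laborious step.

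Then I would cut down to $X$ by integer linear algebra. A Mayer–Vietoris computation identifies $H^1(X;\mathbb{Z})$, via restriction along $Y\hookrightarrow X$, with the sublattice of $H^1(Y;\mathbb{Z})$ consisting of classes vanishing on the surgery slopes $[s_1],\dots,[s_\ell]\in H_1(Y;\mathbb{Z})$. Writing a class as $\sum_c\alpha_c\,\mu_c^*$, vanishing on $[s_j]$ becomes an integer linear equation whose coefficients are all $O(1)$ — they are fixed linking numbers among $\Gamma\cup L$ together with the fixed framings — \emph{except} for the coefficient of $\alpha_K$, which is $\mathrm{lk}(L_j,K)$ and hence only $O(c)$. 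The solution lattice therefore splits: solutions with $\alpha_K=0$ form a sublattice defined by a bounded-coefficient system, so it has a basis with entries $O(1)$; and since $K$ is homologically trivial in $M$ it bounds a Seifert surface in $X$, whose dual class restricts to a class with $\alpha_K=1$, so the full solution lattice is spanned by the previous vectors together with one further vector having $\alpha_K=1$. That extra vector solves a bounded-coefficient system with right-hand side of size $O(c)$, hence may be chosen with all entries $O(c)$. Taking $\phi_1,\dots,\phi_b$ to be the corresponding integer combinations $\sum_c\alpha_c\,\mu_c^*$ and extending them over the bounded-size solid tori — using the cocycle condition, which inflates each value by only a bounded factor — one gets $\sum_i C_{\mathrm{nat}}(\phi_i)\le b\cdot O(c)\cdot O(c)=O(c^2)$. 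These $\phi_i$ form an \emph{integral} basis precisely because the Mayer–Vietoris identification is with the exact kernel sublattice, not merely a finite-index subgroup. Building $\mathcal{T}$, writing down the $\mu_c^*$, and the $O(1)$-dimensional integer linear algebra with $O(c)$-size entries all run in time polynomial in $c$, with implied constants depending only on $M$, $\Gamma$, $L$ and $D$.

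The main obstacle is the second step: producing the meridian cocycles on the diagram-derived triangulation with $C_{\mathrm{nat}}=O(c)$, and ensuring that a cocycle that is small on $\mathcal{T}_0$ stays small once extended over the Dehn-filling solid tori. Everything else is either the standard diagram-to-triangulation machinery or elementary lattice arithmetic; and the quadratic bound $O(c^2)$, rather than a linear one, is genuinely forced, since a Seifert surface for a component $L_j$ is punctured $\Theta(\mathrm{lk}(L_j,K))=\Theta(c)$ times by $K$ and therefore meets $\Theta(c)$ of the tetrahedra near $K$.
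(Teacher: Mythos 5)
Your overall architecture tracks the paper's proof of Theorem \ref{basis} closely: triangulate $S^3 - N^\circ(K\cup\Gamma\cup L)$ from the diagram with $O(c)$ tetrahedra, Dehn fill along $L$ with bounded-size solid tori, take the linking-number (meridian-dual) basis for $H^1$ of the graph--link exterior (this is exactly Lemma \ref{Lem:GraphComplement}), identify $H^1(X)$ with the kernel of the generalised linking matrix, and use the null-homologicity of $K$ to produce one further basis vector whose coefficients solve a fixed-coefficient integer system with right-hand side the vector of linking numbers $\ell k(K,L_j)=O(c)$. All of that is the paper's Steps 1, 2 and 5, and your lattice argument for why the resulting set is an \emph{integral} basis is fine.

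The gap is exactly the step you flag as ``the most laborious'' and then assert without argument: that each meridian class $\mu_c^*$ admits a simplicial representative on $\mathcal{T}_0$ whose value on \emph{every} edge is $O(1)$, so that $C_{\mathrm{nat}}(\mu_c^*)=O(c)$. As stated this is dubious for $\mu_K^*$: its Lefschetz dual is a Seifert-type surface for $K$, of genus $\Theta(c)$, and for a diagram with $\Theta(c)$ nested Seifert circles the natural representative meets a single vertical edge $\Theta(c)$ times; getting uniformly bounded edge values would require a genuinely new argument. Moreover your final count really does need per-edge control and not just a total bound: if you only had the easy estimate of $O(c)$ per edge (hence $C_{\mathrm{nat}}(\mu_{L_j}^*)=O(c^2)$), the extra basis vector, whose coefficients on the $\mu_{L_j}^*$ are of size $\Theta(c)$, would come out at $O(c^3)$. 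The paper sidesteps surfaces entirely at this point: it fixes a maximal tree $T$ in the $1$-skeleton, declares the cocycle to vanish on $T$, and assigns to each non-tree edge $e$ the linking number $\ell k(\ell_e,L_i)$ of the associated tree loop $\ell_e$. This is automatically a cocycle in the correct class, its values are computed and bounded directly by counting crossings of the straight-edge projection of $\ell_e$ with the diagram, and no transversality or normal-position argument is needed. To repair your write-up you would either need to carry out that tree-based construction, or else prove the per-edge bounds with the necessary asymmetry: $O(1)$ per edge for the classes dual to the fixed components $\Gamma\cup L$ (using that their diagram $D$ is fixed), while for $\mu_K^*$ an $O(c)$-per-edge bound suffices because its coefficient in every basis vector is $0$ or $\pm 1$.
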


\medskip
(5) Controlling the number of faces and vertices of the Thurston norm ball polyhedron, in the presence of an efficient basis for the second homology.

A crucial step in the proof of Theorem \ref{thurston ball} is to bound the number of vertices and faces of the Thurston norm ball of the manifold $X$. The following result gives this, assuming that we have a good bound on the Thurston norm of a collection of surfaces that form a basis for $H_2(X, \partial X; \mathbb{R})$.

\begin{thm}
Let $X$ be a compact orientable 3-manifold, and let $m$ be a natural number. Assume that there exist properly immersed oriented surfaces $S_1 , \cdots, S_b$ in $X$ such that their homology classes form a basis for $H_2(X, \partial X; \mathbb{R}) $, and for each $1 \leq i \leq b$ we have $|\chi_-(S_i)| \leq m$. Denote by $W$ the subspace of $H^1(X  ; \mathbb{R})$ with trivial Thurston norm. 
The number of facets of the unit ball for the induced Thurston norm on $H^1(X  ; \mathbb{R})/W$ is at most $(2m+1)^b$, where $b = b_1(X)$ is the first Betti number of $X$. 
Hence, the number of vertices is at most $(2m+1)^{b^2}$ and the number of faces is at most $b(2m+1)^{b^2}$.
\label{number of faces}
\end{thm}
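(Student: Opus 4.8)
The plan is to use the fact that the Thurston norm is a norm (more precisely, a seminorm) and that it takes integer values on integral classes, so that its unit ball is a polytope whose structure is controlled by the integer points on its boundary. Fix the basis $S_1,\dots,S_b$ for $H_2(X,\partial X;\mathbb{R})$, and use the induced isomorphism $H_2(X,\partial X;\mathbb{R})\cong\mathbb{R}^b$ to pass to coordinates; under the Poincaré–Lefschetz duality $H_2(X,\partial X;\mathbb{R})\cong H^1(X;\mathbb{R})$ the Thurston norm $x$ becomes a seminorm on $\mathbb{R}^b$. Let $W$ be its null space and work on the quotient $H^1(X;\mathbb{R})/W$, on which $x$ is a genuine norm with a compact, centrally symmetric, polyhedral unit ball $B_x$.

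The key observation is that $B_x$ is the intersection of finitely many half-spaces, one for each facet, and by Thurston's duality each facet lies in a hyperplane of the form $\{z : \langle \alpha, z\rangle = 1\}$ where $\alpha$ is a vertex of the \emph{dual} unit ball $B_x^*$, and $B_x^*$ is the convex hull of a subset of the \emph{integral} lattice points of the dual space. So the number of facets of $B_x$ equals the number of vertices of $B_x^*$, and it suffices to bound the number of integer lattice points that can occur as vertices of $B_x^*$. Now a lattice point $\alpha$ lies in $B_x^*$ exactly when $\langle \alpha, z\rangle \le 1$ for all $z\in B_x$, equivalently when the dual norm $x^*(\alpha)\le 1$, equivalently when $|\langle \alpha, [S_i]\rangle|$ is small for each $i$ — but more to the point, since the $[S_i]$ form a basis, the functional $\alpha$ is determined by the integer vector $(\langle\alpha,[S_1]\rangle,\dots,\langle\alpha,[S_b]\rangle)$. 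The hypothesis $|\chi_-(S_i)|\le m$ gives $x(S_i)\le m$, hence for any $\alpha$ in the dual ball we have $|\langle\alpha,[S_i]\rangle| \le x^*(\alpha)\, x([S_i]) \le m$. Therefore each coordinate of the integer vector representing $\alpha$ lies in $\{-m,-m+1,\dots,m\}$, giving at most $(2m+1)^b$ possibilities for $\alpha$, and a fortiori at most that many vertices of $B_x^*$, i.e. at most $(2m+1)^b$ facets of $B_x$.

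For the remaining counts: each vertex of $B_x$ is the intersection of at least $b-1$ (in fact exactly $b_1(X)/W$-many, but $b$ suffices as an upper bound) of the facet hyperplanes, so the number of vertices is at most $\binom{(2m+1)^b}{b}\le (2m+1)^{b^2}$, and since each face is determined by the set of vertices it contains, and a face lies in some facet hyperplane, the number of faces is at most $b$ times the number of facets times \dots — more cleanly, the number of faces of any dimension is bounded by the number of subsets of vertices that are faces, and a crude bound using that every face lies on one of the $\le b$ \dots in fact the paper's claimed bound $b(2m+1)^{b^2}$ follows by noting each face is the convex hull of a subset of the $\le (2m+1)^{b^2}$ vertices lying in a common facet, and there are $\le (2m+1)^b \le (2m+1)^{b^2}$ facets, with a further factor of $b$ absorbing the dimension. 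I would organize this last part simply as: vertices $\le (2m+1)^{b^2}$, faces $\le b\cdot(2m+1)^{b^2}$ by bounding faces by (number of facets)$\times$(an extra factor $b$ for lower-dimensional faces via Euler-type reasoning on polytopes), referencing a standard fact about face numbers of polytopes.

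The main obstacle I anticipate is purely bookkeeping rather than conceptual: making precise the passage through Poincaré duality so that "integral cocycle" on the cohomology side corresponds to "integer vector of intersection numbers with the $S_i$" on the homology side, and checking that Thurston's theorem on the polyhedrality of the norm ball — with vertices of the dual ball being lattice points — applies verbatim when the $S_i$ are only \emph{immersed} rather than embedded. For the latter, one uses that $\chi_-$ of an immersed surface still bounds the Thurston norm of its class from above (since the norm is the infimum over all, in particular singular, representatives, or by a standard compression/resolution argument), which is all that is needed; the duality statement itself does not care about embeddedness. I would also need the elementary inequality $\binom{N}{b}\le N^b$ and the trivial bound $(2m+1)^b\le(2m+1)^{b^2}$ for $b\ge 1$ (the case $b=0$ being vacuous since then the norm ball is a point).
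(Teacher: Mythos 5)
Your overall strategy is the paper's: facets of the unit ball of the induced norm on $H^1(X;\mathbb{R})/W$ correspond to vertices of the dual ball, those vertices are integral by Thurston's theorem, each integral point $u$ of the dual ball is determined by the integer vector $(\langle u,[S_1]\rangle,\dots,\langle u,[S_b]\rangle)$, and the inequality $|\langle u,[S_i]\rangle|\le x([S_i])\le m$ pins each coordinate to one of $2m+1$ values, giving $(2m+1)^b$ facets. The vertex and face counts then follow exactly as in the paper from the fact that every $k$-dimensional face of a $d$-dimensional polytope is an intersection of $d-k$ facets, so the number of $k$-faces is at most $\binom{(2m+1)^b}{d-k}$ and summing over $k$ gives $b(2m+1)^{b^2}$; your write-up of this last step trails off into ellipses, but the intended content is the same and the clean version is the one just stated.

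The one genuine gap is the step you yourself flag as the main obstacle: why does $\chi_-(S_i)\le m$ for a merely \emph{immersed} $S_i$ bound the embedded Thurston norm $x([S_i])$? Neither of your proposed justifications works. The Thurston norm $x$ is by definition an infimum over \emph{embedded} representatives, so it is not ``the infimum over all, in particular singular, representatives'' --- that is the singular norm $x_s$, and from the immersed $S_i$ one only gets $x_s([S_i])\le\chi_-(S_i)\le m$ together with the trivial inequality $x_s\le x$, which points the wrong way. Nor does a ``standard compression/resolution argument'' convert an immersed surface into an embedded one of no larger complexity: oriented cut-and-paste handles double-curve sums of \emph{embedded} surfaces (this gives subadditivity of $x$), but resolving a genuinely singular representative without increasing $\chi_-$ is exactly the hard direction of Gabai's theorem that $x=x_s$, proved via sutured manifold hierarchies and taut foliations, and this is precisely what the paper invokes (Theorem~\ref{singular-norm}) in the chain $|\langle u,[S_i]\rangle|\le x([S_i])=x_s([S_i])\le m$. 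With that citation substituted for your parenthetical justifications, the argument is complete.
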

The proof of Theorem \ref{number of faces} uses the fact, due to Thurston \cite{thurston1986norm}, that the vertices of the dual unit ball of the Thurston norm are integral. See Theorem \ref{lattice points} for a result that gives an upper bound on the number of these integral points. In \cite{cooper2009thurston}, Cooper and Tillmann gave an exponential upper bound for the number of vertices of the Thurston norm ball as a function of the number of tetrahedra in a triangulation. This implies an exponential type upper bound on the number of faces. In the absence of an upper bound for the first Betti number, the total number of faces of the Thurston norm ball can in fact be exponentially large in terms of the number of tetrahedra. For example one can take the 3-manifold $M_n$ to be the complement of a link $L_n$ in the 3-sphere, where $L_n$ is the union of $n$ copies $K_1, \cdots, K_n$ of the same fixed non-trivial knot $K$ and with $K_i$ lying in disjoint balls. Then $M_n$ has a triangulation with $O(n)$ tetrahedra, and the number of faces of the Thurston norm ball of $M_n$ is $3^n+1$.




\medskip
(6) Constructing a basis for the subspace of the second homology with trivial Thurston norm.

\begin{thm}
\label{Thm:BasisForW}
Let $\mathcal{T}$ be a triangulation of a compact orientable irreducible 3-manifold $X$. If $X$ has any compressible boundary components, suppose that these are tori. Then there is a collection $w_1, \cdots, w_r$ of integral cocycles that forms a basis for the subspace $W$ of $H^1(X; \mathbb{R})$ consisting of classes with Thurston norm zero with $\sum_i C_{\mathrm{dig}}(w_i)$ at most $O(|\mathcal{T}|^3)$.
\end{thm}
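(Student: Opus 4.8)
The plan is to represent each Thurston-norm-zero class by a normal surface with respect to $\mathcal{T}$, to show that these classes are spanned by a bounded number of normal surfaces of controlled complexity, and then to read off the size bound on their Poincar\'e-dual cocycles. Write $t = |\mathcal{T}|$; the number of edges of $\mathcal{T}$, and hence $b_1(X)$, is $O(t)$, so $r = \dim_{\mathbb{R}} W \le b_1(X) = O(t)$.

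I would begin by identifying $H^1(X;\mathbb{Z})$ with $H_2(X,\partial X;\mathbb{Z})$ by Poincar\'e--Lefschetz duality; under this identification a class has Thurston norm zero exactly when it has a properly embedded representative all of whose components are spheres, disks, annuli or tori (Thurston). Given such a representative, apply Haken's normalisation procedure. Irreducibility of $X$ is used here: the normalising compressions, and the normal spheres that may appear, bound balls, so the pieces they create or destroy are homologically trivial; the hypothesis that every compressible boundary component is a torus is used to control the disk, annulus and torus pieces that arise when one compresses such a component. One concludes that every class of $W$ is a $\mathbb{Z}$-linear combination of classes of connected normal surfaces $S$ with $\chi_-(S) = 0$ --- normal disks, annuli and tori --- so that such classes span $W$.

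Next I would extract a controlled basis. Every normal surface is a Haken sum (with non-negative integer coefficients) of fundamental normal surfaces, and $\chi$ is a linear functional on the normal-surface solution cone; using a least-weight (taut) representative together with this linearity, one argues that a least-weight norm-zero surface has a Haken decomposition in which every summand again satisfies $\chi_- = 0$, the delicate point being to exclude cancellation between summands of positive and of negative Euler characteristic. Hence $W$ is spanned by classes of fundamental normal surfaces with $\chi_- = 0$; choosing from among these an $\mathbb{R}$-basis $\alpha_1, \dots, \alpha_r$ of $W$ and representing each $\alpha_i$ by a least-weight normal surface $S_i$ with $\chi_-(S_i) = 0$, we may further isotope so that $S_1, \dots, S_r$ are pairwise disjoint, incompressible and pairwise non-parallel (the last being automatic from $\mathbb{R}$-linear independence of the classes). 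Now recall that the Poincar\'e-dual cocycle $w_{S}$ of a normal surface $S$ lies in $Z^1(\mathcal{T};\mathbb{Z})$ and takes, on an oriented edge $e$, a value of absolute value at most the weight $\mathrm{wt}_e(S)$; thus $C_{\mathrm{dig}}(w_{S}) \le \sum_e \bigl(1 + \log_2(\mathrm{wt}_e(S)+1)\bigr) = O(t) + O\bigl(t\,\log_2 \mathrm{wt}(S)\bigr)$. If one only invokes the Hass--Lagarias--Pippenger bound $\mathrm{wt}(\text{fundamental surface}) \le 2^{O(t)}$, each term is $O(t^2)$ and the total is merely $O(t^3)$. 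To reach $O(t^2)$ I would prove a packing estimate for the disjoint family $S_1, \dots, S_r$: their combined weight is $2^{O(t)}$, equivalently $\sum_i \log_2 \mathrm{wt}(S_i) = O(t)$ --- morally, a normal surface of weight $2^{w}$ must consume $\Omega(w)$ tetrahedra, and disjoint, non-parallel, incompressible surfaces cannot consume the same tetrahedra. Substituting, $\sum_i C_{\mathrm{dig}}(w_{S_i}) \le r\cdot O(t) + O(t)\sum_i \log_2 \mathrm{wt}(S_i) = O(t^2)$, and the integral cocycles $w_{S_1}, \dots, w_{S_r}$ are the required basis of $W$.

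The heart of the argument, and the step I expect to be hardest, is this packing estimate: improving the naive cubic bound to the stated quadratic one is not a matter of the crude enumeration of fundamental surfaces but requires least-weight (taut) normal surface theory and a careful analysis of how disjoint incompressible surfaces share tetrahedra. That exponential weights are genuinely unavoidable --- the meridian disk of a solid torus with a layered triangulation already has weight $2^{\Omega(t)}$ --- shows that one cannot bound the individual $C_{\mathrm{dig}}(w_{S_i})$ below $O(t^2)$, so the estimate must be of the ``total log-weight linear in $t$'' form; this is also where one must be most careful, since a single surface is allowed to be exponentially large provided the others are correspondingly small. The same least-weight machinery is what rules out the cancellation between positive- and negative-Euler-characteristic Haken summands in the assembly of the spanning set.
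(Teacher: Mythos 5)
Your first half follows essentially the same route as the paper: the content of your ``least-weight representative whose Haken summands all have $\chi_-=0$'' step is exactly the Tollefson--Wang machinery of lw-taut surfaces and lw-taut faces of the projective solution space, which the paper uses to show that $W\cap H^1(X;\mathbb{Z})$ is spanned by fundamental normal discs, annuli and tori; you correctly identify the delicate point (ruling out cancellation of Euler characteristic among summands, together with coherent orientation of the summands so that homology classes add). From there the paper simply applies the Hass--Lagarias bound: a fundamental surface meets each edge at most $t2^{7t+2}$ times, there are $O(t)$ edges, so each dual cocycle $w_i$ has $C_{\mathrm{dig}}(w_i)=O(t^2)$, and the sum over the $r\le b_1(X)$ basis elements is then taken (in the paper's application $b_1$ is bounded by a fixed constant $B$, so the factor of $r$ is harmless).

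The genuine gap in your proposal is the ``packing estimate'' $\sum_i \log_2 \mathrm{wt}(S_i)=O(t)$, which you introduce precisely to kill that factor of $r$ and which you flag as the hardest step but do not prove. This is not a routine verification: it is a multiplicative bound on the weights of a whole family of surfaces, much stronger than the additive Kneser--Haken count of parallelity classes, and the heuristic ``a surface of weight $2^w$ consumes $\Omega(w)$ tetrahedra, and disjoint surfaces cannot share tetrahedra'' does not translate into an argument --- disjoint normal surfaces routinely pass through the same tetrahedra, and nothing prevents several of them from each having weight $2^{\Omega(t)}$. The preliminary reduction is also unjustified: fundamental surfaces chosen from a spanning set need not be compatible, and even incompressible surfaces in distinct isotopy classes generally cannot be isotoped to be pairwise disjoint, so the hypothesis of your packing estimate may not be achievable. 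Your instinct that a fully literal $O(t^2)$ for the \emph{sum} requires more than the per-cocycle bound is fair (the paper's own summation silently costs a factor of $r$, absorbed because $b_1\le B$ in the application), but the remedy you propose is an unproven and quite possibly false claim sitting at the centre of the quantitative part of your argument.
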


This is proved by showing that $W\cap H^1(X ; \mathbb{Z})$ is spanned by fundamental normal surfaces, which is a consequence of work of Tollefson and Wang \cite{TollefsonWang}.


\medskip
(7) In Theorem \ref{Thm:BasisForW}, it is assumed that $X$ is irreducible and that every component of $\partial X$ is toroidal or incompressible. In Section \ref{Sec:SpheresDiscs}, we explain how we may ensure this. We cut along a maximal collection of compression discs and essential normal spheres to decompose $X$ into pieces, and we construct a new simplicial basis for the cohomology of the pieces. We also use the following result from \cite{lackenby2016efficient}.


\begin{thm}[Lackenby]
\label{Thm:IrreducibleNP}
The following decision problem lies in \textbf{NP}. The input is a triangulation of a compact orientable 3-manifold $M$ with (possibly empty) toroidal boundary and $b_1(M) > 0$, and the problem asks whether $M$ is irreducible.
\end{thm}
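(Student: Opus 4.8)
The plan is to produce, for every irreducible instance $M$, a polynomial-size certificate that can be checked in polynomial time, built from a hierarchy for $M$ together with enough auxiliary data to verify that it really is one. The hypotheses $b_1(M)>0$ and toroidal boundary are what get the construction going: a primitive class in $H^1(M;\mathbb{Z})$ is dual to a connected, two-sided, properly embedded surface, and after maximal compression this surface is incompressible; since $M$ is irreducible it is not a sphere, and since it is homologically non-trivial it is not boundary-parallel, so $M$ is Haken. By Haken's theory of hierarchies, $M$ then admits a full hierarchy $M=M_0\supset(M_1,F_1)\supset\cdots\supset(M_n,F_n)$ with $M_i=M_{i-1}\cut F_i$, each $F_i$ incompressible and $\partial$-incompressible in $M_{i-1}$, and $M_n$ a disjoint union of $3$-balls. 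Conversely, such a hierarchy certifies irreducibility: a standard innermost-disc argument pushes any hypothetical essential sphere successively off $F_1,\dots,F_n$, so that it ends up inside a ball and therefore bounds one. Hence it suffices to certify that a hierarchy exists.

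The first thing to control is the length $n$. Whenever $b_1(M_{i-1})>0$ one chooses $F_i$ non-separating, so that $b_1$ drops by one; once $b_1$ vanishes one cuts along essential discs, each cut lowering $-\chi$ of the boundary, until every boundary component bounds a ball. Carrying out the cuts in this order lets one bound $n$ by a polynomial in $|\mathcal{T}|$.

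The main obstacle is controlling the geometric complexity of the hierarchy. One may take each $F_i$ to be a fundamental normal surface in a triangulation $\mathcal{T}_{i-1}$ of $M_{i-1}$, so that $F_i$ has an $O(|\mathcal{T}_{i-1}|)$-bit description; but cutting along $F_i$ and retriangulating can inflate the tetrahedron count by the \emph{weight} of $F_i$, which may be exponential, so that $|\mathcal{T}_n|$ --- and with it the certificate --- blows up. The remedy is not to retriangulate naively, but to record all the hierarchy surfaces at once as a single normal surface $S=F_1\sqcup\dots\sqcup F_n$ in a bounded subdivision of the original $\mathcal{T}$, chosen of polynomially bounded total weight, and to perform the cuts together using efficient cut-and-crush operations on triangulations and normal-surface complexity estimates in the spirit of Agol--Hass--Thurston. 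Establishing that a hierarchy admitting such a polynomially bounded description exists is where essentially all of the work goes.

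Lastly the verifier must confirm that the data is genuinely a hierarchy, the one delicate point being incompressibility and $\partial$-incompressibility of each $F_i$, which is an assertion about the \emph{absence} of a compressing disc and so cannot be checked locally. I would handle this by working with a \emph{taut} sutured-manifold hierarchy rather than a bare one: its terminal pieces are product sutured manifolds (in particular balls), which are manifestly taut, and tautness --- which entails both incompressibility of the decomposing surfaces and irreducibility of every $M_i$ --- propagates back up the hierarchy by the sutured-manifold theory of Gabai and Scharlemann, while the bookkeeping that certifies tautness at each stage (additivity of Euler characteristics and non-increase of the sutured norm) reduces to a polynomial-time combinatorial check on the normal coordinates. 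Combining the length bound, the weight bound, and these checks yields a polynomial-size, polynomial-time-verifiable certificate of irreducibility, so the problem lies in \textbf{NP}.
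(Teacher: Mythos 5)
You should first note that the paper does not prove Theorem \ref{Thm:IrreducibleNP} at all: it is imported verbatim, with attribution, from \cite{lackenby2016efficient}, so there is no in-paper argument to compare against. Measured against the actual proof in that reference, your outline has the right endgame --- certify irreducibility by exhibiting a taut sutured manifold hierarchy terminating in product pieces, with tautness (hence irreducibility) propagating back up via Gabai--Scharlemann --- and the right starting point (a non-trivial class in $H^1$ gives a homologically essential surface to launch the hierarchy). That is genuinely the strategy of \cite{lackenby2016efficient}.

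However, there is a concrete gap at the step you yourself identify as ``where essentially all of the work goes.'' You propose to record the hierarchy surfaces as a single normal surface ``of polynomially bounded total weight.'' This is not achievable: even a single fundamental or vertex normal surface can have weight exponential in $|\mathcal{T}|$ (the bounds $t2^{7t+2}$ and $28t2^{7t-1}$ quoted in Section \ref{Sec:Normal} are essentially sharp), and subdividing the triangulation does not reduce the weight of the essential surfaces you are forced to use. What is polynomially bounded is only the bit-length of the normal coordinate vector, and the entire difficulty of the cited proof is to cut along a surface of exponential weight while keeping a polynomial-size description of the cut-open manifold. The mechanism for this is the parallelity-bundle technology: after cutting, all but $O(t)$ of the handles form an $I$-bundle whose topology and gluing data can be recorded compactly, and the algorithm manipulates this implicit description rather than a triangulation of size comparable to the weight. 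The present paper recapitulates exactly this machinery in Section \ref{Sec:SpheresDiscs} (Theorems \ref{Thm:DecomposeAlongSpheres} and \ref{Thm:SpheresAndDiscs} and the surrounding discussion of parallelity bundles), so the missing ingredient is visible in the text you were asked about; invoking ``cut-and-crush in the spirit of Agol--Hass--Thurston'' does not substitute for it, since crushing does not preserve the manifold in the way a hierarchy requires. With that step repaired, your outline would track the cited proof; without it, the certificate you describe cannot be written down in polynomial size.
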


\begin{cor}

\label{Cor:IrredIncompNP}
The following decision problem lies in \textbf{NP}. The input is a triangulation of a compact orientable 3-manifold $M$ with $b_1(M) > 0$, and the problem asks whether $M$ is irreducible and has incompressible boundary.
\end{cor}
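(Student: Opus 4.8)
\emph{Proof proposal.} The plan is to reduce, in polynomial time, to a single application of Theorem~\ref{Thm:IrreducibleNP}. After routine reductions (treating connected components separately) we may assume $M$ is connected; if $\partial M=\emptyset$ we apply Theorem~\ref{Thm:IrreducibleNP} directly, and if $\partial M$ has a sphere component then, since $b_1(M)>0$ forbids $M=B^3$, $M$ is not irreducible and we reject. So assume $\partial M$ is a nonempty union of surfaces of genus $\geq 1$. Let $DM$ be the double of $M$ along $\partial M$: from a triangulation $\mathcal{T}$ of $M$ one produces a triangulation of $DM$ with $2|\mathcal{T}|$ tetrahedra in polynomial time, and $b_1(DM)\geq b_1(M)>0$ by ``half lives, half dies'' (a Mayer--Vietoris computation, verifiable by linear algebra over $\mathbb{Q}$). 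The heart of the argument is the equivalence
$$M \text{ is irreducible with incompressible boundary} \iff DM \text{ is irreducible}.$$
Granting this, membership in \textbf{NP} follows: a certificate consists of the triangulation of $DM$ (deterministically reconstructible by the verifier) together with a certificate — polynomial by Theorem~\ref{Thm:IrreducibleNP}, since $DM$ is closed with $b_1(DM)>0$ — that $DM$ is irreducible.

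The forward implication is standard. If $M$ is irreducible with incompressible boundary then $\partial M$ is incompressible in $DM$ (as $M$ and $\overline{M}$ are irreducible and $\partial M$ has no sphere components), and gluing two irreducible manifolds along an incompressible non-spherical surface yields an irreducible manifold, by the usual innermost-disc argument. For the reverse implication, assume $DM$ is irreducible. That $M$ itself is irreducible is easy: an essential $2$-sphere $S\subset\operatorname{int} M$ bounds a ball $B\subset DM$, and since $\overline{M}$ is connected, disjoint from $S$, and has $\pi_1(\overline{M})=\pi_1(M)\neq 1$ (because $b_1(M)>0$), we get $\overline{M}\not\subset B$, hence $B\subset M$, a contradiction.

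The remaining step — that $\partial M$ is incompressible in $M$ — is the crux, and the place where I expect the main difficulty. A hypothetical compressing disc for $\partial M$ does not become a compressing or reducing object in $DM$ in any obvious way, so one cannot argue by cut-and-paste; instead I would use an equivariant argument. Suppose $D\subset M$ is a compressing disc for a component $F$ of $\partial M$, with $\gamma=\partial D$ essential in $F$. Then $\Sigma=D\cup_\gamma\overline{D}$ is a $2$-sphere in $DM$, invariant under the flip involution $\tau$ of $DM$ (which fixes $\partial M$ pointwise and exchanges $M$ and $\overline{M}$). Since $b_1(DM)>0$ we have $DM\neq S^3$, so $\Sigma$ bounds a \emph{unique} ball $B$, which is therefore $\tau$-invariant, and $\tau|_B$ is an orientation-reversing smooth involution of $B$ whose fixed set contains the circle $\gamma\subset\partial B$. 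By Smith theory the fixed set $\operatorname{Fix}(\tau|_B)$ is $\mathbb{Z}/2$-acyclic; being a connected compact surface (it is $\geq 1$-dimensional as it contains $\gamma$, and a $\mathbb{Z}/2$-acyclic closed or $1$-dimensional manifold is excluded) with $\chi=1$, it is a disc, and its boundary circle must be $\gamma$. Since $\operatorname{Fix}(\tau|_B)\subseteq\operatorname{Fix}(\tau)=\partial M$, this disc lies in $F$, so $\gamma$ bounds a disc in $F$ — contradicting that $\gamma$ is essential. This establishes the equivalence and hence the corollary.

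Minor points to attend to: the efficient construction of the triangulation of $DM$ and the linear-algebra verification that $b_1(DM)>0$; the bookkeeping needed to handle disconnected $M$ and sphere boundary components; and checking that the Smith-theoretic input (the fixed set of a smooth $\mathbb{Z}/2$-action on a $\mathbb{Z}/2$-acyclic $3$-manifold is $\mathbb{Z}/2$-acyclic) applies in this PL/smooth setting. I would also remark that an alternative to doubling is to cap each boundary component of genus $\geq 1$ with a fixed irreducible, atoroidal $3$-manifold having that surface as incompressible boundary, reducing to the toroidal-boundary case of Theorem~\ref{Thm:IrreducibleNP}; but the reverse implication there meets the same obstacle, and the control of $b_1$ and the construction of the triangulation are messier, so doubling seems cleanest.
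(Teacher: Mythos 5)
Your overall strategy coincides with the paper's: form the double $DM$, note that $b_1(DM)>0$, and feed $DM$ to Theorem~\ref{Thm:IrreducibleNP}, the entire content being the equivalence ``$M$ is irreducible with incompressible boundary $\iff$ $DM$ is irreducible.'' The paper disposes of this equivalence in one sentence by citing the equivariant sphere theorem of Meeks--Simon--Yau (which is really only needed for the forward implication), whereas you prove both directions explicitly. Your forward direction (innermost-disc gluing along an incompressible non-spherical surface) and your Smith-theoretic treatment of compressibility in the reverse direction are correct and standard; note that in the latter you do not actually need any dimension analysis of the fixed set, since $\operatorname{Fix}(\tau|_B)=\operatorname{Fix}(\tau)\cap B=\partial M\cap B$ is already a compact surface with boundary $\gamma$.

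There is one genuine, though easily repaired, gap: in the step ``$DM$ irreducible $\Rightarrow$ $M$ irreducible'' you conclude $\overline M\not\subset B$ ``since $\overline M$ is connected, disjoint from $S$, and has $\pi_1(\overline M)\neq 1$.'' That inference is invalid as stated: a connected set with nontrivial fundamental group can certainly lie inside a ball (a knotted solid torus in $B^3$, say). The missing ingredient is that the inclusion $\overline M\hookrightarrow DM$ is $\pi_1$-injective (equivalently, $H_1$-injective over $\mathbb{Q}$), because the folding map $DM\rightarrow\overline M$ is a retraction. Granting this, if $\overline M\subset B$ then $\pi_1(\overline M)\rightarrow\pi_1(DM)$ factors through $\pi_1(B)=1$, forcing $\pi_1(\overline M)=1$ and hence $b_1(M)=0$, contradicting the hypothesis; so $B\subset M$ and $S$ bounds a ball in $M$. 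With that one line inserted your argument is complete. (A smaller point, which the paper shares: if $M$ is disconnected, a component with vanishing first Betti number is not covered by Theorem~\ref{Thm:IrreducibleNP}, so ``treating components separately'' needs the hypothesis $b_1>0$ componentwise, which is how the corollary is in fact used later in the paper.)
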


This is an immediate consequence of Theorem \ref{Thm:IrreducibleNP}. This is because a compact orientable 3-manifold $M$ is irreducible with incompressible boundary if and only if its double $DM$ is irreducible. This follows from the equivariant sphere theorem \cite{MeeksSimonYau}. Moreover, assuming that $M$ has no sphere boundary components, $b_1(DM)> 0$ if and only if $b_1(M)>0$.

\subsection{Varying $M$ and $K$}

As mentioned above, it seems very unlikely that Theorems \ref{main} and \ref{main:boundary} remain true if $M$  and $K$ are allowed to vary, because of the following result of Agol, Hass and Thurston \cite{agol2006computational}.

\begin{thm}
The following problem is \textbf{NP}-complete. The input is a triangulation of a closed orientable 3-manifold $M$, a knot $K$ in its 1-skeleton and an integer $g$, and the problem asks whether the genus of $K$ is at most $g$.
\end{thm}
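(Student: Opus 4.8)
The plan is to prove the two directions of \textbf{NP}-completeness separately. Membership in \textbf{NP} is the routine half and comes from normal surface theory; \textbf{NP}-hardness is the substantive half and is obtained by encoding a combinatorial problem into knot genus, exploiting the freedom to choose the ambient manifold. (The theorems stated earlier in this paper are of no help here: they address the harder situation in which $M$ is \emph{fixed}, whereas the whole point of this hardness result is that $M$ is allowed to vary.)

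For membership in \textbf{NP}: from the input triangulation of $M$ with $t$ tetrahedra, drill out a regular neighbourhood of $K$ and retriangulate, in polynomial time, to obtain a triangulation $\mathcal{T}_K$ of the exterior $X = M - N^\circ(K)$ with $O(t)$ tetrahedra. If $g(K)\le g$ then $K$ is null-homologous, and there is a connected minimal genus Seifert surface $S$; such an $S$ is incompressible and boundary-incompressible in $X$, so by Haken's normalization theorem it is isotopic to a normal surface. The quantitative input, due to Hass--Lagarias--Pippenger and Agol--Hass--Thurston \cite{agol2006computational} (building on ideas of Jaco--Oertel), is that $S$ may be chosen so that its vector of normal coordinates with respect to $\mathcal{T}_K$ has entries bounded by $2^{O(t)}$, hence total bit-size polynomial in $t$. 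This vector is the certificate. A polynomial-time verifier checks that it satisfies the normal surface matching and quadrilateral conditions, computes from it that the surface is connected and orientable with boundary a single longitude of $K$, computes its Euler characteristic and hence its genus, and compares this with $g$. (No certificate is needed for no-instances.) Thus the problem lies in \textbf{NP}.

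For \textbf{NP}-hardness I would reduce from \textsc{One-in-three SAT}. Given a formula $\Phi$ with variables $x_1,\dots,x_n$ and clauses $C_1,\dots,C_m$, the goal is to build, in polynomial time, a triangulated closed orientable 3-manifold $M_\Phi$, a knot $K_\Phi$ in its 1-skeleton, and an integer $g_\Phi$, so that $g(K_\Phi)\le g_\Phi$ if and only if $\Phi$ has an assignment making exactly one literal true in each clause. The manifold $M_\Phi$ is assembled from gadget pieces glued along their boundaries: a \emph{variable gadget} $V_i$ for each $x_i$, designed so that any Seifert surface for $K_\Phi$ crosses $V_i$ in one of exactly two minimal-complexity patterns (the ``true sheet'' and the ``false sheet''), the two patterns having equal cost; and a \emph{clause gadget} $W_j$ for each $C_j$, attached through ``ports'' to the variable gadgets of its three literals, designed so that the part of a Seifert surface lying in $W_j$ has a fixed small complexity (negative Euler characteristic) when exactly one of the three incident sheets arrives ``on'' consistently with the sign of that literal, and strictly larger complexity for every other incoming pattern in $\{0,1\}^3$. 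The ports are arranged so that the ``on/off'' state at a port is forced to agree with the ``true/false'' state of the corresponding variable gadget. Summing contributions over all gadgets, the minimal genus of $K_\Phi$ equals a computable baseline $g_\Phi$ exactly when there is a globally consistent choice of true/false for each variable under which every clause receives the one-in-three pattern, which is precisely one-in-three satisfiability of $\Phi$. One must also rule out ``exotic'' low-genus Seifert surfaces that do not respect the gadget decomposition: taking the gadget pieces to be sufficiently incompressible (so that any incompressible surface is isotopic to a standard position within each piece) forces the global minimum over all Seifert surfaces to coincide with the minimum over all truth assignments.

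The main obstacle is the construction and verification of the gadgets. One must simultaneously guarantee that the variable gadget genuinely forces a \emph{binary} choice and admits no cheaper degenerate surface; that the clause gadget's cost function is exactly as advertised for all eight incoming port patterns; and that every Seifert surface of genus close to $g_\Phi$ can be isotoped into the normal form dictated by the gadgets, so that the global minimal genus decomposes additively over the pieces. This is a delicate combination of cut-and-paste, Dehn surgery, and normal surface arguments, and it is where essentially all the difficulty of \cite{agol2006computational} lies. Once the gadgets are in place the reduction is manifestly computable in polynomial time, and together with membership in \textbf{NP} this gives \textbf{NP}-completeness.
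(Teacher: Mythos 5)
This theorem is not proved in the paper at all: it is quoted verbatim from Agol--Hass--Thurston \cite{agol2006computational} in the subsection on varying $M$ and $K$, so there is no in-paper argument to compare yours against. Measured against the actual proof in \cite{agol2006computational}, your architecture is the right one --- normal-surface certificates for membership in \textbf{NP}, and a gadget reduction from \textsc{One-in-three SAT} for hardness --- but as written the proposal has two genuine gaps.

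First, in the membership half you write that the verifier ``computes from it that the surface is connected and orientable with boundary a single longitude of $K$, computes its Euler characteristic and hence its genus,'' as if this were routine. It is not: the certified normal surface has coordinates of size $2^{O(t)}$, hence exponentially many triangles and squares, so it cannot be assembled explicitly. The Euler characteristic is a linear functional of the normal coordinates and is therefore easy, but passing from $\chi$ to genus requires counting connected components and boundary components of a surface you cannot write down. Doing this in polynomial time in the bit-size of the vector is precisely the central technical contribution of \cite{agol2006computational} (their orbit-counting algorithm for weighted, exponentially long orbits of interval-identification systems); without it, or an equivalent, the verifier does not run in polynomial time. Relatedly, the existence of a minimal-genus Seifert surface among normal surfaces with exponentially bounded coordinates needs the fact that taut surfaces are realized by vertex (or fundamental) solutions --- the same Jaco--Oertel/Tollefson--Wang circle of ideas this paper invokes for Theorem \ref{Thm:FundamentalNormZero} --- and should be cited rather than asserted. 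Second, the hardness half is a plan rather than a proof: the variable and clause gadgets, the additivity of genus over the gadget decomposition, and the exclusion of exotic low-genus surfaces are exactly where all the work lies, and you say so yourself. A correct write-up would either construct these gadgets or simply cite \cite{agol2006computational}, which is what the paper under review does.
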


However, what if we allow $M$ to vary but fix $b_1(M)$ in advance? It is unclear to the authors whether the problem of knot genus in such manifolds $M$ is likely to lie in \textbf{co-NP}. 

We believe that in this more general setting, Theorem \ref{basis} does not hold. Certainly, the proof of Theorem \ref{basis} required $M$ to be fixed. This bound on $\sum_i C_{\mathrm{una}}(\phi_i)$ was used to bound $\chi_-(S_i)$, where $S_i$ is a representative surface for the Poincar\'e dual of $c_i$. In the absence of such a bound, it is not clear that one can find a good upper bound on the number of faces and vertices of the Thurston norm ball for $H^1(X; \mathbb{R})$. In particular, it is an interesting question whether there is a sequence of 3-manifolds $X$ with bounded first Betti number and triangulations $\mathcal{T}$, where the number of vertices of the Thurston norm ball of $X$ grows faster than any polynomial function of $|\mathcal{T}|$.

\subsection{3-manifolds with non-toroidal boundary}

In Theorem \ref{main:boundary}, we assumed that $M$ has toroidal boundary. However, it is natural to consider a more general situation where $M$ may have higher genus boundary components. We conjecture that the generalisation of the problem \textsc{Knot genus in the fixed 3-manifold $M$} to these 3-manifolds $M$ lies in \textbf{co-NP}.

\subsection*{Acknowledgement} We would like to thank the referee for their careful reading of the article, and for their suggestions that greatly improved the article.

\section{Preliminaries}
\begin{notation}

For a subset $A$ of a topological space $Y$, the interior of $A$ is denoted by $A^ \circ$.

The first Betti number of a manifold $M$ is indicated by $b_1(M)$.
\label{notation}
\end{notation}	

\subsection{Complexity Theory}
\label{Sec:Complexity}

The material in this section is borrowed from \cite{arora2009computational,rich2008automata}, and we refer the reader to them for a more thorough discussion. 

Let $\{ 0 ,1 \}^*$ be the set of all finite strings in the alphabet $\{ 0 , 1 \}$. A \emph{problem} $P$ is defined as a function from $\{ 0 , 1 \} ^*$ to $\{ 0 ,1 \}^*$. Here the domain is identified with the \emph{inputs} or \emph{instances}, and the range is identified with the \emph{solutions}. A \emph{decision problem} is a problem whose range can be taken to be $\{ 0 ,1 \} \subset \{ 0 ,1 \}^*$. Intuitively a decision problem is a problem with yes or no answer.

A (deterministic) \emph{Turing machine} is a basic computational device that can be used as a model of computation. We refer the reader to Page 12 of \cite{arora2009computational} for a precise definition. By an \emph{algorithm} for the problem $P$, we mean a Turing machine $M$ that given any instance $I$ of the problem on its tape, computes and halts exactly with the solution $P(I)$.   We say $M$ runs in time $T: \mathbb{N} \longrightarrow \mathbb{N}$, if for any instance $I$ of binary length $|I|$, if we start the Turing machine $M$ with $I$ on its tape, the machine halts after at most $T(|I|)$ steps. 

The \emph{complexity class} \textbf{P} consists of all decision problems $P$ for which there exists a Turing machine $M$ and positive constants $c,d$ such that $M$ answers the problem in time $c  n^d$, where $n$ is the size of the input. 

The complexity class \textbf{NP} consists of decision problems such that their yes solutions can be efficiently \emph{verified}. By this we mean that there is a Turing machine that can verify the yes solutions in polynomial time. This is possibly a larger complexity class than the class \textbf{P}, which was described as the set of decision problems that can be efficiently \emph{solved}. In other words, \textbf{P} $\subseteq$ \textbf{NP}. The precise definition is as follows. By a \emph{language}, we mean a subset of $\{ 0 ,1 \}^*$. In our context, we have a decision problem $P \colon \{ 0 ,1 \}^* \longrightarrow \{ 0 ,1 \}$ and we take $L$ as the set of instances whose solutions are equal to $1$ (yes answer).

\begin{definition}
A language $L \subset \{ 0 ,1 \}^*$ is in \textbf{NP}, if there exists a polynomial $p \colon \mathbb{N} \longrightarrow \mathbb{N}$ and a Turing machine $M$ that runs in polynomial time (called the \emph{verifier} or \emph{witness} for $L$) such that for every instance $x \in \{ 0 ,1 \}^*$
\[ x \in L \iff \exists u \in \{ 0,1 \}^{p(|x|)} \hspace{2mm} \text{such that} \hspace{2mm} M(x,u)=1. \]
If $x \in L$ and $u \in \{ 0 ,1 \}^{p(|x|)}$ and $M(x,u)=1$, we call $u$ a \emph{certificate} for $x$.

A language $L \subset \{ 0 ,1 \}^*$ is in \textbf{co-NP} if its complement $\{ 0,1 \}^* \setminus L$ is in \textbf{NP}. 
\end{definition} 

Hence \textbf{co-NP} is the set of decision problems such that their no solutions can be efficiently verified. A decision problem is called \textbf{NP}-\emph{hard} if it is at least as hard as any other problem in \textbf{NP}. More specifically, every problem in \textbf{NP} is \emph{Karp-reducible} to any \textbf{NP}-hard problem. (See Page 42 of \cite{arora2009computational} for a definition of Karp-reducibility.) In particular, if any \textbf{NP}-hard problem is solvable in polynomial time, then \textbf{P} $=$ \textbf{NP}.

Now instead of restricting our attention to decision problems, we consider the computational complexity of more general problems. Recall that a \emph{problem} $P$ is just a function $P \colon \{ 0 ,1 \}^\ast \rightarrow \{ 0 ,1 \}^\ast$. We say that $P$ is in \textbf{FNP} if there is a deterministic polynomial time verifier that, given an arbitrary input pair $(x, y)$ where 
$x,y \in \{ 0 ,1 \}^\ast$, determines whether $P(x) = y$.




\subsection{Thurston norm}
\label{Sec:ThurstonNorm}

Let $M$ be any compact orientable 3-manifold. Thurston \cite{thurston1986norm} defined a semi-norm on the second homology group $H_2(M , \partial M ; \mathbb{R})$. This norm generalises the notion of knot genus, and for any homology class measures the minimum `complexity' between all properly embedded oriented surfaces representing that homology class. More precisely, for any integral homology class $a \in H_2(M , \partial M ; \mathbb{R})$ define the \emph{Thurston norm} of $a$, $x(a)$, as 
\[ x(a) = \min \{ \chi_-(S) \hspace{1mm} | \hspace{1mm} [S]=a,  \hspace{3mm} S \text{ is compact, oriented and properly embedded} \}. \]
This defines the norm for integral homology classes. One can extend this linearly to rational homology classes, and then extend it continuously to all real homology classes.

Consider the special case that $K$ is a knot of genus $g$ in $S^3$, and $M: = S^3 - N^\circ(K)$, where $N(K)$ is a tubular neighbourhood of $K$. The second homology group $H_2(M , \partial M; \mathbb{R})$ is isomorphic to $\mathbb{R}$ and the Thurston norm of a generator for the integral lattice
\[ H_2(M , \partial M ; \mathbb{Z}) \subset H_2(M , \partial M; \mathbb{R}) \]
is equal to $2g-1$ if $g \geq 1$, and $0$ otherwise. 

In general this might be a semi-norm as opposed to a norm, since one might be able to represent some non-trivial homology classes by a collection of spheres, discs, tori or annuli. However, if $W$ denotes the subspace of $H_2(M , \partial M ; \mathbb{R})$ with trivial Thurston norm, then one gets an induced norm on the quotient vector space $H_2(M , \partial M ; \mathbb{R})/W$. 

Thurston proved that the unit ball of this norm is a convex polyhedron. Given any norm on a vector space $V$, there is a corresponding dual norm on the dual vector space, that is the space of functionals on $V$. In our case, the dual space to $H_2(M , \partial M ; \mathbb{R})$ is $H^2(M , \partial M ; \mathbb{R})$. Thurston showed that the unit ball for the corresponding dual norm $x^\ast$ is a convex polyhedron with \emph{integral} vertices. For a thorough exposition of Thurston norm and examples see \cite{thurston1986norm, candel2003foliations}.

Finally, it is possible to define a norm $x_s$ using singular surfaces and allowing real coefficients. Thus, if $S_1 , \cdots, S_k$ are oriented singular surfaces in a 3-manifold $M$, and if $S = \sum a_i S_i$ is a real linear combination, representing a homology class $a$, we may define
\[ \chi_-(S) = \sum |a_i| \chi_-(S_i). \]
The singular norm $x_s$ is defined as 
\[ x_s(a) = \inf \{ \chi_-(S) \hspace{1mm} | \hspace{1mm} [S]=a \}. \]  


Gabai \cite{gabai1983foliations} proved the equivalence of the two norms $x$ and $x_s$, previously conjectured by Thurston \cite{thurston1986norm}.

\begin{thm}[Gabai]
Let $M$ be a compact oriented 3-manifold. Then on $H_2(M)$ or $H_2(M, \partial M)$, $x_s = x$ where $x_s$ denotes the norm on homology based on singular surfaces.  
\label{singular-norm}
\end{thm}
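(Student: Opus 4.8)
The plan is to establish the two inequalities $x_s \le x$ and $x \le x_s$ separately; the first is formal, and the second is where the real work --- Gabai's sutured manifold and foliation machinery --- enters. For the easy direction, any compact oriented properly embedded surface $S$ with $[S]=a$ is in particular a singular surface, namely the inclusion $S \hookrightarrow M$ with coefficient $1$ on each component, and it contributes exactly $\chi_-(S)$ to the infimum defining $x_s(a)$; hence $x_s(a) \le x(a)$ for every integral class, and then for every real class, since both $x$ and $x_s$ are homogeneous and $x_s$ is a seminorm (convexity is immediate from the definition, and finiteness on rational classes follows from $x_s \le x < \infty$), so both are continuous. For the reverse inequality it likewise suffices, by homogeneity and density of the rationals, to prove $x(a) \le x_s(a)$ for a nonzero integral class $a \in H_2(M, \partial M; \mathbb{Z})$. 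We may assume $M$ is irreducible, since both $x$ and $x_s$ are compatible with the prime decomposition (vanishing on $S^3$ and $S^1 \times S^2$ summands). If $x(a)=0$, then $a$ has an embedded representative consisting only of spheres, disks, tori and annuli, which still has $\chi_- = 0$ as a singular surface, so $x_s(a)=0$ and we are done; thus assume $x(a)>0$. The absolute case $a \in H_2(M)$ is handled in the same way, or by doubling along $\partial M$.

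The proof of $x(a) \le x_s(a)$ rests on two facts. The first is a lower bound for $x_s$ by Euler classes of foliations, due to Thurston: if $\mathcal{F}$ is a transversely oriented taut foliation of $M$ (with $\partial M$ tangent to $\mathcal{F}$) with Euler class $e(\mathcal{F}) \in H^2(M, \partial M; \mathbb{Z})$, then
\[ |\langle e(\mathcal{F}), a\rangle| \;\le\; x_s(a) \qquad \text{for all } a \in H_2(M, \partial M; \mathbb{R}). \]
It suffices to bound $|\langle f^*e(\mathcal{F}), [S]\rangle|$ by $\chi_-(S)$ for a single singular surface $f \colon S \to M$ with $S$ connected, since the general case follows by applying this to each term of a real $2$-chain and using the triangle inequality. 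After a homotopy, $f$ is in general position with respect to $\mathcal{F}$, so $f$ is tangent to the leaves only along a controlled finite set; the induced singular $1$-dimensional foliation on $S$ has total tangency index $\chi(S)$ by Poincar\'e--Hopf, while $\langle f^*e(\mathcal{F}), [S]\rangle$ is a signed count of the same tangencies. A short argument --- the one Thurston used to show that leaves of a taut foliation are norm-minimizing --- then bounds $|\langle f^*e(\mathcal{F}), [S]\rangle|$ by $\max\{0, -\chi(S)\} = \chi_-(S)$, tautness being exactly what lets the positive-index tangencies be discarded.

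The second fact is Gabai's existence theorem, which makes the bound sharp for a well-chosen foliation. Since $M$ is irreducible and $x(a)>0$, pick a norm-minimizing embedded representative $S$ of $a$ that is incompressible and has no sphere or disk components, so $\chi_-(S) = -\chi(S) = x(a)$. Gabai's theorem produces a transversely oriented taut foliation $\mathcal{F}$ of $M$ containing $S$ as a union of leaves; keeping track of the plane field $T\mathcal{F}$ through the construction shows that, with compatible orientations, $\langle e(\mathcal{F}), a\rangle = -\chi(S) = x(a)$. Combining this with the previous paragraph,
\[ x(a) \;=\; \langle e(\mathcal{F}), a\rangle \;\le\; x_s(a) \;\le\; x(a), \]
so equality holds throughout. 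Together with the reductions above, this proves $x = x_s$ on $H_2(M)$ and on $H_2(M, \partial M)$.

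The main obstacle is the construction of $\mathcal{F}$: this is the technical core of Gabai's work \cite{gabai1983foliations}. One forms a sutured manifold hierarchy beginning with the decomposition of $(M, \gamma)$ along $S$ and continuing along a carefully chosen (``well-groomed'') sequence of surfaces; one must prove that such a hierarchy terminates in a product sutured manifold, that tautness is preserved at each stage (which comes down to the behaviour of $\chi_-$ under a single sutured manifold decomposition), and then build $\mathcal{F}$ by thickening the hierarchy from the lowest level upward while controlling both the homology class of the leaves and the Euler class. By comparison the two inequalities invoked above are soft, but they genuinely require the full force of tautness --- not merely Reeblessness of $\mathcal{F}$ --- which is precisely what the sutured-manifold machinery is designed to guarantee.
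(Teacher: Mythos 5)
The paper offers no proof of this statement: it is Gabai's theorem (Corollary 6.18 of \cite{gabai1983foliations}), quoted verbatim and used as a black box, so the only thing to measure your proposal against is Gabai's original argument. Your sketch reconstructs its architecture correctly: the formal inequality $x_s \le x$; the reduction to nonzero integral classes in an irreducible manifold; the Euler-class bound $|\langle e(T\mathcal{F}), a\rangle| \le x_s(a)$ for transversely oriented taut foliations; and Gabai's existence theorem producing, from a norm-minimizing embedded representative $S$ of $a$, a taut foliation containing $S$ as a union of leaves, so that $x(a) = |\langle e(T\mathcal{F}), a\rangle| \le x_s(a) \le x(a)$. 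That is indeed how the equality of the two norms is proved.

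Two places in the sketch deserve a flag, because they are where the content actually sits rather than being ``soft.'' First, the singular Euler-class inequality is not obtained by ``the one Thurston used'' for embedded leaves: in the embedded incompressible case the positive-index (center) tangencies are removed by the Roussarie--Thurston general position isotopy, but for a merely singular $f \colon S \to M$ no such isotopy exists, and eliminating centers requires a genuinely different argument; this is part of what \cite{gabai1983foliations} (building on a remark in \cite{thurston1986norm}) supplies. Second, your choice of a norm-minimizing representative ``with no sphere or disk components'' is not automatic when $\partial M$ is compressible, since disk components can carry nontrivial relative homology and cannot simply be discarded; Gabai's reductions handle this, but your sketch does not. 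Neither point is a wrong turn --- both are gaps you have knowingly delegated to the literature --- so the proposal should be read as an accurate roadmap to Gabai's proof rather than a proof, which is consistent with how the paper itself treats the theorem.
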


\subsection{Bareiss algorithm for solving linear equations} 
\label{SubSec:Bareiss}

Gaussian elimination is a useful method for solving a system of linear equations with integral coefficients, computing determinants and calculating their echelon form. The algorithm uses $O(n^3)$ arithmetic operations, where $n$ is the maximum of the number of variables and the number of equations. One caveat is that the intermediate values for the entries during the process can get large. An algorithm due to Bareiss resolves this issue. If the maximum number of bits for entries of the input is $L$, then the running time of the algorithm is at most a polynomial function of $n+L$. Moreover, no intermediate value (including the final answer) needs more than $O(n \log(n) + n L)$ bits \cite{bareiss1968sylvester}.

\subsection{Mixed integer programming}
\label{SubSec:MixedIntegerProg}

This refers to the following decision problem.  Let $n \geq 0$ and $m > 0$ be integers, and let $k$  be a positive integer satisfying $k \geq n$. Let $A$ be an $m \times k$ matrix with integer coefficients, and let $b \in \mathbb{Z}^m$. Then the problem asks whether there is an $x = (x_1, \dots, x_k)^T \in \mathbb{R}^k$ such that
\[ Ax \leq b\]
\[ x_i \in \mathbb{Z} \textrm{ for all } i \textrm{ satisfying } 1 \leq i \leq n.\]
The size of the input is given by $k + m + C_{\mathrm{dig}}(A) + C_{\mathrm{dig}}(b)$.
Lenstra \cite{lenstra1983integer} provided an algorithm to solve this problem that runs in polynomial time for any fixed value of $n$. 

It is also shown in \cite{lenstra1983integer}, using estimates of von zur Gathen and Sieveking \cite{VzGSieveking}, that if the above instance of Mixed Integer Programming does have a positive solution $x$, then it has one for which $C_{\mathrm{dig}}(x)$ is bounded above by a polynomial function of the size of the input.

Figure \ref{linear programming} shows an example of Mixed Integer Programming where
\begin{enumerate}
	\item the shaded region is the \emph{feasible region} namely $A x \leq b$ where $x \in \mathbb{R}^2$;
	\item the dots indicate integral points inside the feasible region.
	\end{enumerate}
In this example, there is at least one integral point inside the feasible region and hence the answer is yes.

\begin{figure}	
	\centering
	\includegraphics[width= 2 in]{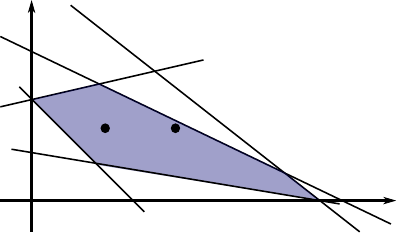}
	\caption{Integer Linear Programming}
	\label{linear programming}
\end{figure}

\subsection{Polyhedra and their duals} 

Our exposition is from \cite{brondsted2012introduction} and we refer the reader to that for more details and proofs. A set of points $\{ y_0 , y_1 , \cdots, y_m \} \subset \mathbb{R}^d$ is \emph{affinely independent} if the vectors $y_1 -y_0 , \cdots, y_m- y_0$ are linearly independent. A \emph{polytope} $P$ is the convex hull of a non-empty finite set $\{ x_1 , \cdots, x_n \}$ in $\mathbb{R}^d$. We say $P$ is $k$-dimensional if some $(k+1)$-subfamily of $\{ x_1 , \cdots, x_n \}$ is affinely independent, and $k$ is maximal with respect to this property. A convex subset $F$ of $P$ is called a \emph{face} of $P$ if for any two distinct points $y,z \in P$ such that $]y,z[ \hspace{1mm} \cap F$ is non-empty, we have $[y,z] \subset F$. Here $]y,z[$ and $[y,z]$ denote the open and closed segments connecting $y$ and $z$ respectively. A face $F$ is \emph{proper} if $F \neq \emptyset, P$. A point $x \in P$ is a \emph{vertex} if $\{ x \}$ is a face. A \emph{facet} $F$ of $P$ is a face of $P$ with $\dim(F) = \dim(P)-1$. Every face $F$ of $P$ is itself a polytope, and coincides with the convex hull of the set of vertices of $P$ that lie in $F$. Every proper face $F$ of $P$ is the intersection of facets of $P$ containing $F$ (see Theorem 10.4 of \cite{brondsted2012introduction}).

The intersection of any family of faces of $P$ is again a face. For any family $\mathcal{A}$ of faces of $P$, there is a largest face contained in all members of $\mathcal{A}$ denoted by $\inf \mathcal{A}$, and there is a smallest face that contains all members of $\mathcal{A}$ denoted by $\sup \mathcal{A}$. Denote the set of faces of $P$ by $\mathcal{F}(P)$ and let $\subset$ denote inclusion. Therefore, the partially ordered set $(\mathcal{F}(P), \subset)$ is a \emph{complete lattice} with the lattice operations $\inf \mathcal{A}$ and $\sup \mathcal{A}$. The pair $(\mathcal{F}(P), \subset)$ is called the \emph{face-lattice} of $P$.

Let $P$ be a $d$-dimensional polytope in $\mathbb{R}^d$ containing the origin. Define the \emph{dual} of $P$ as 
\[ P^\ast:= \{ y \in \mathbb{R}^d \hspace{1mm}|\hspace{1mm} \sup_{x \in P} \langle x, y \rangle \leq 1 \}. \]
For any face $F$ of $P$, define the dual face $F^{\triangle}$ as
\[ F^\triangle:= \{ y \in P^\ast \hspace{1mm}|\hspace{1mm} \sup_{x \in F} \langle x, y \rangle = 1 \}.\]
We have $(P^\ast)^\ast = P$, and $(F^\triangle)^\triangle = F$. There is a one-to-one correspondence between faces $F $ of $P$ and faces $F^\triangle$ of $P^\ast$, and 
\[ \dim(F) + \dim(F^\triangle) =d-1. \] 
Moreover, the mapping $F \mapsto F^\triangle$ defines an \emph{anti-isomorphism} of face-lattices (see Corollary 6.8 of \cite{brondsted2012introduction})
\[ (\mathcal{F}(\mathcal{P}), \subset) \rightarrow (\mathcal{F}(\mathcal{P}^*), \subset). \]

A subset $Q$ of $\mathbb{R}^d$ is called a \emph{polyhedral set} if $Q$ is the intersection of a finite number of closed half-spaces or $Q = \mathbb{R}^d$. Polytopes are precisely the non-empty bounded polyhedral sets.





\subsection{Pseudo-manifolds, orientability and degree of mappings }
At some point in this article, we need to talk about the degree of a mapping between two topological spaces that a-priori are not manifolds. They are similar to manifolds, but with particular types of singularities. The following discussion is from \cite{birman1980seifert}. `A \emph{closed pseudo-manifold} is defined as follows:

PM1) It is a \emph{pure}, finite $n$-dimensional simplicial complex  ($n \geq 1$); by pure we mean that each $k$-simplex is a face of at least one $n$-simplex (\emph{purity condition}).

PM2) Each $(n - 1)$-simplex is a face of exactly two $n$-simplices (\emph{non-branching condition}).

PM3) Every two $n$-simplexes can be connected by means of a series of alternating $n$- and $(n - 1)$-simplexes, each of which is incident with its successor (\emph{connectivity condition}).

A closed pseudo-manifold is said to be \emph{orientable} if each of its $n$-simplices can be oriented coherently, that is, oriented so that opposite orientations are induced in each $(n - 1)$-simplex by the two adjoining $n$-simplices. 

A closed $n$-chain on an orientable and coherently oriented closed pseudo-manifold is completely determined whenever one knows how often a single, arbitrarily chosen, oriented $n$-simplex appears in the chain. This is so 
because each of the $n$-simplices adjoining this simplex must appear equally often, from (PM3). One can reach each $n$-simplex by moving successively through adjoining simplices; hence all $n$-simplices must appear equally often. Consequently  the  $n$-th homology group is the free cyclic group. In other words, the $n$-th Betti number is equal to 1. A basis for this group is one of the two chains which arise by virtue of the coherent orientation of the pseudo-manifold.' \\

A choice for one of these chains is an \emph{orientation} of the pseudo-manifold and its homology class is then called the \emph{fundamental class}.

Let $D$ and $R$ be oriented pseudo-manifolds of dimension $n$, and $f \colon D \longrightarrow R$ be a continuous map. The $n$-th homology groups of $D$ and $R$ are both isomorphic to a free cyclic group. Denote by $[D]$ and $[R]$ the fundamental homology classes of $D$ and $R$ respectively. Then there is an integral number $d$ such that $f_\ast([D])$ is homologous to $d [R]$. This number $d$ is defined as the \emph{degree} of $f$. Similar to the case of manifolds, one can compute the degree by counting signed preimages of a generic point, where the sign depends on whether the map is locally orientation-preserving or not.

\subsection{Normal surfaces}
\label{Sec:Normal}
The theory of normal surfaces was introduced by Kneser in \cite{kneser1929geschlossene} where he proved a prime decomposition theorem for compact 3-manifolds, and was extended by Haken in his work on algorithmic recognition of the unknot \cite{haken1961theorie}. Let $\mathcal{T}$ be a triangulation of a compact 3-manifold $M$. A surface $S$ properly embedded in $M$ is said to be \emph{normal} if it intersects each tetrahedron in a collection of disjoint triangles and squares, as shown in Figure \ref{Fig:Normal}.

\begin{figure}[h]	
	\centering
	\includegraphics[width= 3 in]{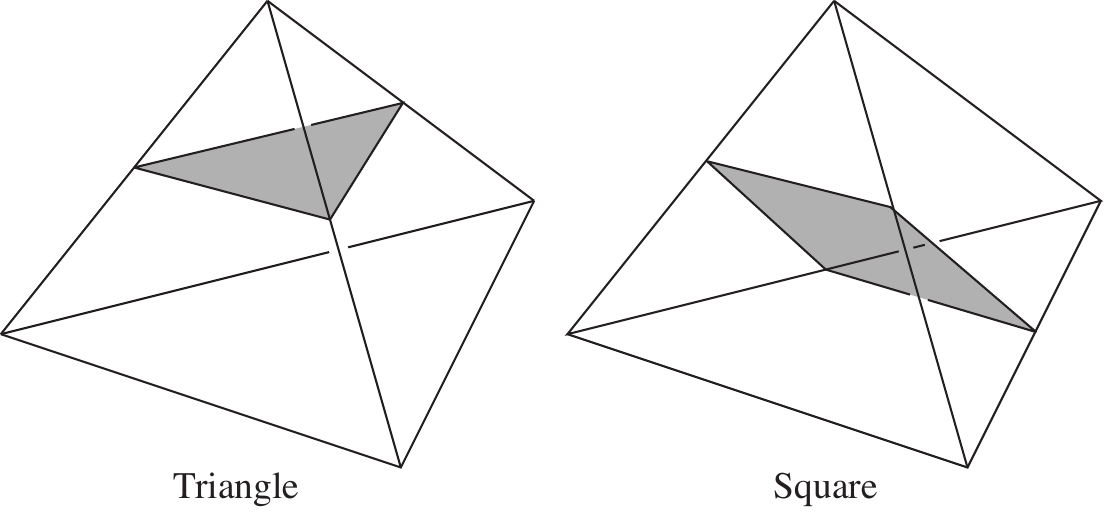}
	\caption{A triangle and a square}
	\label{Fig:Normal}
\end{figure}

In each tetrahedron, there are 4 types of triangles and 3 types of squares. Thus, in total, there are $7t$ types of triangles and squares in $\mathcal{T}$, where $t$ is the number of tetrahedra in $\mathcal{T}$. A normal surface $S$ determines a list of $7t$ non-negative integers, which count the number of triangles and squares of each type in $S$. This list is called the \emph{vector} for $S$ and is denoted by $(S)$.

The normal surface $S$ is said to be \emph{fundamental} if $(S)$ cannot be written as $(S_1) + (S_2)$ for non-empty properly embedded normal surfaces $S_1$ and $S_2$. 
It is said to be a \emph{vertex surface} if no non-zero multiple of $(S)$ can be written as $(S_1) + (S_2)$ for non-empty properly embedded normal surfaces $S_1$ and $S_2$. This has an alternative interpretation in terms of the normal solution space, as follows.

The \emph{normal solution space} $\mathcal{N}(\mathcal{T})$ is a subset of $\mathbb{R}^{7t}$. The co-ordinates of $\mathbb{R}^{7t}$ correspond to the $7t$ types of triangles and squares in $\mathcal{T}$. The subset $\mathcal{N}(\mathcal{T})$ consists of those points in $\mathbb{R}^{7t}$ where every co-ordinate is non-negative and that satisfy the normal \emph{matching equations} and \emph{compatibility conditions}. There is one matching equation for each type of normal arc in each face of $\mathcal{T}$ not lying in $\partial M$. The equation asserts that in each of the two tetrahedra adjacent to that face, the total number of triangles and squares that intersect the given face in the given arc type are equal. The compatibility conditions assert that for different types of squares within a tetrahedron, at least one of the corresponding co-ordinates is zero. For any properly embedded normal surface $S$, its vector $(S)$ lies in $\mathcal{N}(\mathcal{T})$. Indeed, the set of points in $\mathcal{N}(\mathcal{T})$ that are a vector of a properly embedded normal surface is precisely $\mathcal{N}(\mathcal{T}) \cap \mathbb{Z}^{7t}$.

The \emph{projective solution space} $\mathcal{P}(\mathcal{T})$ is the intersection 
$$\mathcal{N}(\mathcal{T}) \cap\{ (x_1, \cdots, x_{7t}) : x_1 + \cdots + x_{7t} = 1 \}.$$
It is shown in \cite{matveev2007algorithmic} that $\mathcal{P}(\mathcal{T})$ is a union of convex polyhedra. A normal surface $S$ is \emph{carried} by a face of $\mathcal{P}(\mathcal{T})$ if its vector $(S)$ lies on a ray through the origin of $\mathbb{R}^{7t}$ that goes through that face. When a normal surface $S$ is carried by a face $C$, and $(S) = (S_1) + (S_2)$ for normal surfaces $S_1$ and $S_2$, then $S_1$ and $S_2$ are also carried by $C$. The reason for this is that $C$ is the intersection between $\mathcal{P}(\mathcal{T})$ and some hyperplanes of the form $\{ x_i = 0 \}$. Since $(S) = (S_1) + (S_2)$, then $(S_1)$ and $(S_2)$ also lie in these hyperplanes and hence also are carried by $C$.

A normal surface $S$ is a vertex surface exactly when some non-zero multiple of $(S)$ is a vertex of one of the polyhedra of $\mathcal{P}(\mathcal{T})$. Using this observation, it was shown by Hass and Lagarias (Lemma 3.2 in \cite{HassLagarias}) that each co-ordinate of the vector of a vertex normal surface is at most $2^{7t-1}$. Hence, the number of points of intersection between a vertex normal surface and the 1-skeleton of $\mathcal{T}$ is at most $28t2^{7t-1}$. They also showed that each co-ordinate of a fundamental normal surface in $\mathcal{T}$ has modulus at most $t2^{7t+2}$.

A common measure of complexity for a normal surface $S$ is its \emph{weight} $w(S)$ which is its number of intersections with the 1-skeleton of $\mathcal{T}$.

\section{Main Theorems}

In this section, we prove Theorems \ref{thm:Thurston complexity for a knot} and \ref{main:boundary}, assuming various ingredients that will be proved in later sections. 


\theoremstyle{theorem}
\newtheorem*{Thurston complexity for a knot}{Theorem \ref{thm:Thurston complexity for a knot}}
\begin{Thurston complexity for a knot}
		Let $M$ be a compact, orientable 3-manifold given by a fixed diagram $D$ for $\Gamma \cup L$, where $M$ is obtained from $S^3$ by removing an open regular neighbourhood of the graph $\Gamma$ and performing surgery on the framed link $L$. The problem \textsc{Thurston complexity for a knot in the fixed 3-manifold $M$} lies in \textbf{co-NP}.
\end{Thurston complexity for a knot}	

\begin{proof} 

We are given a diagram for $K \cup \Gamma \cup L$, which contains $D$ as a sub-diagram, where $K$ is our given knot. Let $c$ be the total crossing number of this diagram of $K$. Recall that this is the number of crossings between $K$ and itself, and between $K$ and $\Gamma \cup L$. Set $X:= M - N^{\circ}(K)$ to be the exterior of a tubular neighbourhood of $K$ in $M$.\\
 
\textbf{Step 1}:  By Theorem \ref{basis}, we can construct a triangulation $\mathcal{T}$ of $X = M - N^\circ(K)$, and simplicial 1-cocycles $\phi_1 , \cdots , \phi_b$ such that
\begin{enumerate} 
\item the number of tetrahedra $|\mathcal{T}|$ of $\mathcal{T}$ is at most a linear function of $c$;
\item the cocycles $\phi_1, \cdots , \phi_b$ form an integral basis for $H^1(X ; \mathbb{Z})$;
\item the \emph{unary} complexity $\sum_i C_{\mathrm{una}}(\phi_i)$ is at most a polynomial function of $c$; 
\item $\mathcal{T}$ can be extended to a triangulation $\mathcal{T}'$ of $M$ with $O(c)$ tetrahedra, in which $K$ is simplicial.
\end{enumerate}
Moreover, the construction of $\mathcal{T}$, $\mathcal{T}'$, and $\phi_1, \cdots, \phi_b$ can be done in polynomial time in $c$. \\

\textbf{Step 2:} We check whether $K$ is homologically trivial in $M$, as otherwise there is no Seifert surface for $K$ and the genus of $K$ is $\infty$. We do this by considering the triangulation $\mathcal{T}'$ of $M$ in which $K$ is simplicial and then determining whether $K$ is the boundary of a simplicial 2-chain. This can be done in time that is polynomial in $c$, using the Bareiss algorithm for solving linear equations.

Since $K$ is homologically trivial, it has a longitude denoted by $\ell$. Recall that this is the boundary of a Seifert surface $S$ for $K$. The longitude is unique up to sign, for the following reason. If $\ell'$ is any other longitude, the intersection number $[\ell'].[\ell]$ on $\partial X$ equals the intersection  number $[\ell'].[S]$ in $X$, but this is zero because $\ell'$ is homologically trivial in $X$. Again using the Bareiss algorithm, the longitude $\ell$ on $\partial N(K)$ can be determined in time that is polynomial in $c$ and, when it is represented as a simplicial 1-cycle, its complexity $C_{\mathrm dig}(\ell)$ is at most a polynomial function of c.
\\

\textbf{Step 3}: Note the first Betti number of $X$ is bounded above by the constant $B = b_1(M)+1$, since we are drilling a knot from $M$. Therefore, we can use Theorem \ref{thurston ball} to compute the unit ball of the Thurston norm on $H^1(X ; \mathbb{R})$, using a non-deterministic Turing machine. Here $H^1(X ; \mathbb{R})$ has been identified with $H_2(X , \partial X ; \mathbb{R})$ using Poincar\'{e} duality. 

Note that the size of the input, that is the sum of the number of tetrahedra and $\sum_i C_{\mathrm{una}}(\phi_i)$, is at most a polynomial function of $c$. Therefore, this can be done in time that is at most polynomially large in $c$. Hence, we can construct the following:  
\begin{enumerate}
\item  A basis $\{ w_1 , \cdots , w_r \}$ for the subspace $W$ of $H^1(X ; \mathbb{R})$ with trivial Thurston norm. Each $w_i$ is an integral cocycle and is written as a linear combination of the given cocycles $\{ \phi_1 , \cdots , \phi_b \}$. Denote by $p$ the projection map from $H^1(X ; \mathbb{R})$ to $H^1(X ; \mathbb{R})/W$.

\item  A set of points $V \subset H^1(X ; \mathbb{Q})$ such that $p(V)$ is the set of vertices of the unit ball of $H^1(X ; \mathbb{R})/W$, together with a list $\mathcal{F}$ of subsets of $V$. Each element of $V$ is written in terms of the basis $\{ \phi_1 , \cdots , \phi_b \}$. We think of $\mathcal{F}$ as the list of faces of the unit ball for $H^1(X ; \mathbb{R})/W$, in the sense that for $F \in \mathcal{F}$ the set
\[ \{ p(v) \hspace{2mm} | \hspace{2mm} v \in F \}, \]
forms the set of vertices of some face of the unit ball for $H^1(X ; \mathbb{R})/W$. Moreover, this covers all the faces as we go over all the elements $F$ of $\mathcal{F}$.
\end{enumerate}
Because the problem \textsc{Thurston norm ball for 3-manifolds with $b_1 \leq B$} lies in \textbf{FNP}, the number of digits of the output is at most a polynomial function of the complexity of the input. Hence, $\sum_i C_{\mathrm{dig}}(w_i)$, $\sum_{v \in V} C_{\mathrm{dig}}(v)$ and $|\mathcal{F}|$ are all bounded above by polynomial functions of $c$.\\


\textbf{Step 4}: 
There is an identification between $H^1(X; \mathbb{Z})$ and $H_2(X, \partial X ; \mathbb{Z})$ using Poincar\'{e} duality, and there is a boundary map 
\[ H_2(X , \partial X; \mathbb{Z}) \longrightarrow H_1(\partial X ; \mathbb{Z}). \]
This in turn induces a boundary map 
\[ \partial \colon H^1(X ; \mathbb{Z}) \longrightarrow H_1(\partial X ; \mathbb{Z}). \]

For each facet $F = \{ u_1, \cdots, u_s \} $ of the unit ball of the Thurston norm on $H^1(X ; \mathbb{R} ) / W$, denote by $\text{Cone}(F)$ the cone over the face $F$:
\[ \text{Cone}(F) : = \{ r_1 u_1 + \cdots + r_s u_s  \hspace{1mm}| \hspace{1mm} r_1, \cdots, r_s \in \mathbb{R}_{\geq 0} \}. \]
Denote the Thurston semi-norm by $x \colon H^1(X ; \mathbb{R}) \rightarrow \mathbb{R}$.  For each facet $F$, consider the following minimum:
\[ m_F: = \min \hspace{1mm} \{ x(h) \hspace{2mm} | \hspace{2mm} h \in (W + \text{Cone}(F)) \cap H^1(X; \mathbb{Z}) \hspace{2mm} \text{and} \hspace{2mm}   \partial (h) = \pm [\ell] \} , \]
where $m_F$ is defined to be $\infty$ if there is no integral homology class in $W + \text{Cone}(F)$ and with boundary $\pm [\ell]$. This integer $m_F$ is given to us non-deterministically. We will show below that the number of digits of $m_F$ is bounded above by a polynomial function of $c$. We verify that $m_F$ is indeed the above minimum, in time that is at most polynomially large in $c$, by the following argument.

Let $h$ be any element of $W + \text{Cone}(F)$. We may write
\begin{equation}
h = \beta_1 w_1 + \cdots + \beta_r w_r +  \alpha_1 u_1 + \cdots + \alpha_s u_s,
\end{equation} 
for some $\beta_1, \cdots, \beta_r \in \mathbb{R}$ and $\alpha_1, \cdots, \alpha_s \in \mathbb{R}_{\geq 0}$. 
We also require that $h$ lies in $H^1(X; \mathbb{Z})$ which is the condition
\begin{equation}
h = \gamma_1 \phi_1 + \cdots + \gamma_b \phi_b,
\end{equation}
for some $\gamma_1, \cdots, \gamma_b \in \mathbb{Z}$. The condition $\partial (h) = \pm [\ell]$ translates into 
\begin{equation}
\beta_1 \partial(w_1) + \cdots + \beta_r \partial (w_r) + \alpha_1 \partial(u_1)+ \cdots + \alpha_s \partial(u_s) = \pm [\ell].
\end{equation}
Since each of the vertices $\{ u_1 , \cdots , u_s \}$ has Thurston norm equal to one and they all lie on the same face, we have 
\[ x(h) = \alpha_1 + \cdots + \alpha_s. \]
Therefore, we are looking to find the minimum $m_F$ of the linear functional $\alpha_1 + \cdots + \alpha_s$ under the conditions 

i) $\alpha_i \in \mathbb{R}_{\geq 0}$, $\beta_i \in \mathbb{R}$, $\gamma_i \in \mathbb{Z}$;

ii) $(1)=(2)$ (by which we mean putting the right hand sides of the equations equal) and $(3)$.

As explained in Section \ref{SubSec:MixedIntegerProg}, since the number $b$ of integer variables is fixed, for each facet we can verify whether the above equations have a solution, in time that is bounded above by a polynomial function of the number of real variables, the number of equations and the number of bits encoding the coefficients of the linear constraints. Hence this can be done in time that is a polynomial function of $c$. Moreover, for each facet that the above equations have a solution, there is a solution where $\sum_i C_{\mathrm{dig}}(\alpha_i)$ is bounded above by a polynomial function of $c$. Hence, the number of digits of $m_F$ is bounded from above by a polynomial function of $c$.

If $m_F =  \infty$ then we verify using Mixed Integer Programming that the above equations have no solution.  If $m_F$ is finite, we verify that the condition
\[ x(h) = m_F \]
is satisfied for some $h$, whereas the
condition 
\[x(h) \leq m_F - 1\]
has no solution. (Note that the Thurston norm takes only integer values on elements of $H^1(X; \mathbb{Z})$ and so we do not need to consider the possibility that $x(h)$ might lie strictly between $m_F-1$ and $m_F$.) These are again instances of Mixed Integer Programming, and can be computed in polynomial time in $c$. See Figure \ref{knot genus} for an example, with the following properties.
\begin{enumerate}
	\item The octahedron is the unit ball of the Thurston norm ($W = \{0\}$). 
	\item The affine plane $P$ is the location of points with boundary equal to $[\ell]$. In this example, $P$ is disjoint from the unit ball.
	\item The shaded region on $P$ is the intersection of $P$ with the cone over the shaded face $F$ of the unit ball. Here the shaded face $F$ is a triangle, and its projection to $P$ is a degenerate (non-compact) triangle since one edge of $F$ happened to be parallel to $P$. 
	\item The dots on $P$ indicate the integral points on $P$ lying in the cone over $F$. 
	\item The face $F$ determines the equation for the linear functional $x(h)$. The Mixed Integer Programming problem asks whether there is an integral point $h$ on $P$ that satisfies $x(h) \leq m_F$ (respectively $m_F -1$) and the constraints i) and ii) above. 
\end{enumerate}

\begin{figure}
	\labellist
	\pinlabel $P$ at 200 50
	\pinlabel $F$ at 100 100 
	\endlabellist
	
	\centering
	\includegraphics[width=3 in]{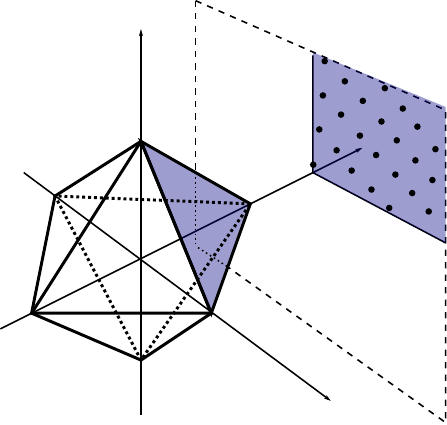}
	\caption{Finding the minimum Thurston complexity between Seifert surfaces coming from a single face of the Thurston norm ball}
	\label{knot genus}
\end{figure} 

\textbf{Step 5}: Since $K$ is homologically trivial, $m_F$ is finite for at least one facet $F$. The minimum of $m_F$ over all facets $F$ of the unit ball of the Thurston norm for $H^1(X ; \mathbb{R})/W$ is equal to the Thurston complexity $c_\text{Th}(K)$ of $K$. Therefore, we can check if this minimum is greater than the given integer $g$ or not. Note that the number of facets is at most polynomially large in $c$ by the combination of Theorems \ref{basis} and \ref{number of faces}. Hence, the algorithm runs in time that is at most polynomially large in $c$.\\ 

This finishes the non-deterministic algorithm, thereby establishing that \textsc{Thurston complexity for a knot in the fixed 3-manifold $M$} lies in \textbf{co-NP}. 
\end{proof}

The next lemma translates the problem of computing the genus of a knot $K$ to the computation of the Thurston complexity for $K$. This is where the toroidal boundary hypothesis will be used in the proof of Theorem \ref{main:boundary}.

\begin{lem}
	Let $M$ be a compact orientable 3-manifold with toroidal boundary, and $K$ be a homologically trivial oriented knot in $M$. Let $c_\mathrm{Th}(K)$ be the Thurston complexity for $K$. The genus of $K$ is equal to $\frac{1}{2} (c_\mathrm{Th}(K)+1)$ if $c_\mathrm{Th}(K) \geq 1$, and is equal to $0$ otherwise. 
	\label{computing genus from Thurston norm}
\end{lem}

\begin{proof}
	Set $X := M - N^\circ(K)$ as the complement of a tubular neighbourhood of $K$ in $M$. Let
	\[ \partial \colon H_2(X, \partial X; \mathbb{Z}) \rightarrow H_1(\partial X; \mathbb{Z})\] 
	be the boundary map. Denote by $\ell $ the longitude of $K$. Recall that 
	\begin{align*}
	c_\mathrm{Th}(K): = \min \{ \chi_-(S) \hspace{1mm} | \hspace{1mm} & S \text{ is a compact oriented properly embedded surface in }X \\ & \text{ with } [\partial S] = [\ell] \in H_1(\partial X ; \mathbb{Z})   \}.
\end{align*}
	Note that a Seifert surface for $K$ has only one boundary component by definition, which lies on $\partial N(K)$, whereas a compact oriented surface whose boundary is in the same homology class as $[\ell] \in H_1(\partial X; \mathbb{Z})$ can have extra boundary components lying on $\partial M$. Hence, the lemma follows if we show that $c_\mathrm{Th}(K) = c_\mathrm{g}(K)$, where
	\begin{align*}
		c_\mathrm{g}(K): = \min \{ \chi_-(S) \hspace{1mm} | \hspace{1mm} & S \text{ is a compact oriented properly embedded surface in } X \\ & \text{ with } \partial S = \ell \text{ up to isotopy}   \}
	\end{align*}
	
	The inequality $c_\mathrm{Th}(K) \leq c_\mathrm{g}(K)$ is immediate, so we need to show that $c_\mathrm{g}(K) \leq c_\mathrm{Th}(K)$. 
	
	Let $S$ be a compact oriented surface with $[\partial S] = [\ell] \in H_1(\partial X ; \mathbb{Z})$. We will modify $S$ to construct a Seifert surface $S'$ with $\chi_-(S') \leq \chi_-(S)$. If any component of $\partial S$ bounds a disc in $\partial X$, we may pick one that is innermost in $\partial X$ and then attach the disc that it bounds to $S$. This does not increase $\chi_-(S)$. So, we may assume that no component of $\partial S$ bounds a disc in $\partial X$.
	For each torus component $T$ of $\partial M$, the union of components of $\partial S$ lying on $T$ is homologically trivial. Hence this union consists of parallel essential simple closed curves and there are two adjacent simple closed curves along $T$ that have opposite orientations; these two boundary components of $S$ can be tubed together to obtain a surface $F$ with the same Euler characteristic as $S$ but with fewer boundary components on $\partial M$. We claim that $\chi_-(F) \leq \chi_-(S)$. To see this, we analyse components of $F$ with positive Euler characteristic. Any component of $F$ with positive Euler characteristic is either 
	\begin{enumerate}
		\item an untouched component of $S$, or
		\item a disc obtained by tubing together a disc and an annulus component of $S$, or
		\item a sphere obtained by tubing together two disc components of $S$.
	\end{enumerate}
	This together with $\chi(F)=\chi(S)$ shows that $\chi_-(F) \leq \chi_-(S)$. Repeating this procedure by tubing boundary components together, we obtain a surface $F''$ disjoint from $\partial M$ such that $\chi_-(F'') \leq \chi_-(S)$. Now $\partial F''$ is homologous to $\ell$, and hence it consists of a union of curves parallel to $\ell$ on $\partial N(K)$. By repeating the tubing procedure as above, we can construct a Seifert surface $S'$ for $K$ with $\chi_-(S') \leq \chi_-(S)$. This shows that $c_\mathrm{g}(K) \leq c_\mathrm{Th}(K)$, which together with the trivial inequality $c_\mathrm{Th}(K) \leq c_\mathrm{g}(K)$ completes the proof.
\end{proof}

\theoremstyle{theorem}
\newtheorem*{main}{Theorem \ref{main:boundary}}
\begin{main}
	Let $M$ be a compact, orientable 3-manifold with toroidal boundary given by a fixed diagram $D$ for $\Gamma \cup L$, where $M$ is obtained from $S^3$ by removing an open regular neighbourhood of the link $\Gamma$ and performing surgery on the framed link $L$. The problem \textsc{Knot genus in the fixed 3-manifold $M$} lies in \textbf{co-NP}.
\end{main}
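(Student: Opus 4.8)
The plan is to combine the ingredients (1)--(7) to certify the genus of a knot $K$ in the fixed manifold $M$. Given the diagram of $K \cup \Gamma \cup L$ with total crossing number $c$, I would first use Theorem \ref{basis} to build, in polynomial time, a triangulation $\mathcal{T}$ of $X = M - N^\circ(K)$ with $O(c)$ tetrahedra, together with simplicial $1$-cocycles $\phi_1, \dots, \phi_b$ forming an integral basis for $H^1(X;\mathbb{Z})$ with $\sum_i C_{\mathrm{nat}}(\phi_i) = O(c^2)$. The genus of $K$ is then essentially recovered from the Thurston norm: if $\alpha \in H^1(X;\mathbb{Z}) \cong H_2(X,\partial X;\mathbb{Z})$ is the class of a Seifert surface (detected among the $\phi_i$ via the linking/homology data coming from the diagram), then the minimal genus is $(x(\alpha) + 1)/2$ when $x(\alpha) > 0$, after checking that a minimal-complexity representative is connected and has a single boundary curve on $\partial N(K)$ (this connectedness bookkeeping is routine and handled as in \cite{lackenby2016efficient}). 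So it suffices to compute $x(\alpha)$, or rather to verify a claimed value.

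Second, I would reduce to the irreducible, incompressible-or-toroidal-boundary case. Using Section \ref{Sec:SpheresDiscs} and Corollary \ref{Cor:IrredIncompNP} (together with Theorem \ref{Thm:IrreducibleNP}), the certificate includes a maximal collection of essential normal spheres and compression discs along which to cut $X$, new triangulations of the resulting pieces of size polynomial in $c$, and new simplicial cohomology bases for the pieces whose $C_{\mathrm{nat}}$-complexity is still polynomial in $c$; the Thurston norm is additive under this decomposition, so computing $x(\alpha)$ reduces to the pieces. On each such piece, apply Theorem \ref{Thm:BasisForW} to produce an integral basis $w_1, \dots, w_r$ for the Thurston-norm-zero subspace $W$ with $\sum_i C_{\mathrm{dig}}(w_i) = O(|\mathcal{T}|^2) = O(c^2)$, and use the surfaces dual to the $\phi_i$ — whose $\chi_-$ is $O(c^2)$ by Theorem \ref{basis} — as input to Theorem \ref{number of faces}, giving at most $(O(c^2)+1)^{b^2}$ faces and vertices. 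Since $b = b_1(X)$ is bounded in terms of the fixed manifold $M$ (the knot exterior has $b_1(X) = b_1(M) + 1$ or thereabouts, independent of $c$), this is a polynomial bound on the number of faces and vertices of the Thurston norm ball.

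Third, invoke Theorem \ref{thurston ball}: the full combinatorial description of the Thurston norm ball of each piece — the basis for $W$ and the rational vertices/faces expressed in the $\phi_i$-coordinates — lies in \textbf{FNP}, with the non-standard complexity measure $|\mathcal{T}| + \sum_i C_{\mathrm{nat}}(\phi_i) = O(c) + O(c^2) = O(c^2)$, which is polynomial in the input size. Thus the certificate for the main problem is: the triangulation of $X$ and the $\phi_i$ from Theorem \ref{basis}; the sphere/disc decomposition data; for each piece, the \textbf{FNP}-certificate for its Thurston norm ball; and finally the claimed value of $x(\alpha)$ on each piece, which the verifier checks by evaluating the (now explicitly known) norm function at $\alpha$ — a matter of linear algebra over $\mathbb{Q}$, done efficiently via the Bareiss algorithm (Section \ref{SubSec:Bareiss}) to control bit-size. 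The verifier then sums the contributions, extracts the genus, and compares with the given integer $g$.

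The main obstacle is the tension between Theorems \ref{thurston ball} and \ref{basis}: Theorem \ref{thurston ball} is only useful because its complexity is measured in $C_{\mathrm{nat}}$ rather than $C_{\mathrm{dig}}$, so everything hinges on having a cohomology basis with $C_{\mathrm{nat}}(\phi_i)$ polynomially bounded — which is exactly the surprising content of Theorem \ref{basis} and the place where fixing $M$ is essential. A secondary technical point is making the sphere/disc decomposition compatible with all of this: one must verify that cutting along the normal surfaces can be done while keeping triangulation sizes linear and cohomology bases $C_{\mathrm{nat}}$-efficient, and that the homology class $\alpha$ and its norm behave additively and are correctly tracked through the decomposition. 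Once these bookkeeping issues are dispatched, assembling the certificate and the polynomial-time verifier is straightforward.
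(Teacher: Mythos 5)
There is a genuine gap, and it is at the very heart of the problem. You write that the genus of $K$ is recovered as $(x(\alpha)+1)/2$ for \emph{the} class $\alpha$ of a Seifert surface, ``detected among the $\phi_i$,'' and that it therefore suffices to verify a claimed value of $x(\alpha)$ for this single class. But there is no single such class: a Seifert surface for $K$ is any properly embedded surface whose boundary is the longitude $\ell$ on $\partial N(K)$, and the set of classes $h \in H^1(X;\mathbb{Z}) \cong H_2(X,\partial X;\mathbb{Z})$ with $\partial(h) = \pm[\ell]$ is an affine coset of the image of $H_2(X;\mathbb{Z})$, which is infinite as soon as $b_1(M) \geq 1$. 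The genus of $K$ is the minimum of $x$ over this infinite lattice of classes, not the value of $x$ at one of them. This is exactly why the rational homology sphere case was already known and the general case was open: in a rational homology sphere there is only one candidate class up to sign. If a single class sufficed, you would not need the Thurston norm ball at all --- Theorem \ref{lackenby} already certifies the norm of one class --- and your invocation of Theorem \ref{thurston ball} followed by ``evaluating the norm function at $\alpha$'' does not use the ball for anything that Theorem \ref{lackenby} does not already give.

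The missing step, which is Steps 4 and 5 of the paper's proof, is the minimization over this infinite affine lattice. The paper first computes the longitude $\ell$ (via Bareiss, in Step 2, which your proposal also omits), then uses the explicit facial structure of the norm ball to write $H^1(X;\mathbb{R})$ as a union of the sets $W + \mathrm{Cone}(F)$ over facets $F$; on each such cone the norm is the linear functional $\alpha_1 + \cdots + \alpha_s$ in the cone coordinates. For each facet $F$ the certificate contains the value $m_F = \min\{x(h) : h \in (W+\mathrm{Cone}(F)) \cap H^1(X;\mathbb{Z}),\ \partial(h) = \pm[\ell]\}$, and the verifier checks both that $x(h) = m_F$ is feasible and that $x(h) \leq m_F - 1$ is infeasible. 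These are instances of Mixed Integer Programming with a \emph{fixed} number of integer variables (since $b_1(X) \leq b_1(M)+1$ is a constant because $M$ is fixed), so Lenstra's algorithm verifies them in polynomial time; the genus is then read off from $\min_F m_F$. Without this, your certificate cannot rule out a Seifert surface in a different homology class of smaller complexity, so it does not certify the genus. A secondary point: you perform the sphere/disc decomposition in the main argument and appeal to additivity of the norm over the pieces, whereas the paper confines that decomposition to the proof of Theorem \ref{thurston ball}; in your arrangement you would additionally have to track the boundary constraint $\partial(h)=\pm[\ell]$ and the joint minimization across pieces, which you do not address --- but this is moot until the single-class error is fixed.
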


\begin{proof}
	We would like to verify in non-deterministic polynomial time whether the genus $g(K)$ of $K$ is greater than a given integer $g$ or not. Let $k = 2g-1$ if $g\geq 1$, and $k=0$ otherwise. Since $M$ has toroidal boundary, Lemma \ref{computing genus from Thurston norm} implies that $c_\mathrm{Th}(K)= 2 g(K) -1$ if $g(K) \geq 1$ and $c_\mathrm{Th}(K)=0$ otherwise. By Theorem \ref{thm:Thurston complexity for a knot}, we can verify in non-deterministic polynomial time whether the Thurston complexity $c_\mathrm{Th}(K)$ for $K$ is greater than $k$ or not. Equivalently, we can verify in non-deterministic polynomial time whether the genus $g(K)$ is greater than $g$ or not, completing the proof.
\end{proof}

\section{The number of faces and vertices of the Thurston norm ball }

In this section, we prove Theorem \ref{number of faces}, which provides an upper bound on the number of faces and vertices of the Thurston norm ball. The key to this is the following result, which controls the number of integral points in the dual norm ball.

\begin{thm}
Let $X$ be a compact orientable 3-manifold, and let $m$ be a natural number. Assume that there exist properly immersed oriented surfaces $S_1 , \cdots, S_b$ in $X$ such that their homology classes form a basis for $H_2(X, \partial X; \mathbb{R}) $, and for each $1 \leq i \leq b$ we have $|\chi_-(S_i)| \leq m$. Define $\mathcal{A}$ as the set of integral points inside $H^2(X, \partial X; \mathbb{Z}) \otimes \mathbb{Q}$ whose dual norm is at most one. The size of $\mathcal{A}$ is at most $(2m+1)^b$, where $b = b_1(X)$ is the first Betti number of $X$. 
\label{lattice points}
\end{thm}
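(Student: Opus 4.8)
The plan is to bound $|\mathcal{A}|$ by exhibiting an injective map from $\mathcal{A}$ into a box of lattice points in $\mathbb{R}^b$ of side length $2m+1$. The natural coordinates to use are the pairings with the given basis surfaces $S_1,\dots,S_b$. Concretely, since $[S_1],\dots,[S_b]$ form a basis of $H_2(X,\partial X;\mathbb{R})$, every class $\alpha \in H^2(X,\partial X;\mathbb{Q})$ (which we identify with a functional on $H_2(X,\partial X;\mathbb{R})$ via the pairing, using Poincaré–Lefschetz duality as needed) is determined by the tuple $(\langle \alpha, [S_1]\rangle, \dots, \langle \alpha,[S_b]\rangle) \in \mathbb{Q}^b$. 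So the map $\Phi \colon \mathcal{A} \to \mathbb{Q}^b$, $\Phi(\alpha) = (\langle \alpha,[S_i]\rangle)_i$, is injective on all of $H^2(X,\partial X;\mathbb{Q})$, hence certainly on $\mathcal{A}$.

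Next I would show that $\Phi(\mathcal{A})$ lands inside $\{-m,-m+1,\dots,m\}^b$, which has exactly $(2m+1)^b$ elements and immediately gives the bound. Two things must be checked. First, integrality: if $\alpha$ is an integral class, then $\langle \alpha,[S_i]\rangle$ is an integer. The subtlety is that $S_i$ is only an immersed surface, but the pairing of a cohomology class with the homology class it represents is still an integer — one pushes $\alpha$ to an integral singular cocycle and evaluates, or equivalently uses that $[S_i]$ lies in the image of $H_2(X,\partial X;\mathbb{Z})$. Second, and this is the heart of the matter: the bound $|\langle \alpha,[S_i]\rangle| \le m$ for $\alpha$ in the dual unit ball. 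By definition of the dual norm, $\alpha$ has dual norm at most one exactly when $|\langle\alpha,\beta\rangle| \le x(\beta)$ for all $\beta \in H_2(X,\partial X;\mathbb{R})$. Taking $\beta = [S_i]$ gives $|\langle\alpha,[S_i]\rangle| \le x([S_i])$. Now I invoke that the Thurston norm is bounded above by the singular norm $x_s$ (indeed they are equal, by Gabai's Theorem \ref{singular-norm}, though here only the easy inequality $x \le x_s$ is needed), so $x([S_i]) \le x_s([S_i]) \le \chi_-(S_i) \le m$, the middle inequality being immediate from the definition of $x_s$ applied to the single immersed surface $S_i$ with coefficient $1$. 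Combining, $|\langle\alpha,[S_i]\rangle| \le m$, so $\Phi(\alpha) \in \{-m,\dots,m\}^b$.

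Stringing these together: $\Phi$ is injective and maps $\mathcal{A}$ into a set of size $(2m+1)^b$, so $|\mathcal{A}| \le (2m+1)^b$.

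The main obstacle I anticipate is making the pairing between $H^2(X,\partial X;\mathbb{Q})$ and the homology classes $[S_i]$ completely precise, and confirming that it is integer-valued on the integral lattice despite $S_i$ being merely immersed rather than embedded — this is where one must be careful about which homology and cohomology groups are dual to which, and about the role of the boundary. The inequality $x([S_i]) \le \chi_-(S_i)$ for immersed surfaces is exactly what the singular Thurston norm is designed to provide, so once the duality bookkeeping is set up the estimate itself is short. Everything else (injectivity of $\Phi$, counting the box) is routine linear algebra.
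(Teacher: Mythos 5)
Your proposal is correct and is essentially the same argument as the paper's: both use the coordinates $\alpha \mapsto (\langle \alpha, [S_i]\rangle)_i$, note these are integers lying in $[-m,m]$ via the dual-norm inequality $|\langle\alpha,[S_i]\rangle| \le x([S_i]) \le x_s([S_i]) \le \chi_-(S_i)$, and count the box. One small correction: the inequality $x \le x_s$ that you need is the \emph{hard} direction of Gabai's theorem (the easy direction is $x_s \le x$, since embedded surfaces are in particular singular), so you are genuinely invoking Theorem \ref{singular-norm} and not just a triviality.
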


\begin{proof}
Let $\langle \cdot , \cdot \rangle$ be the pairing between cohomology and homology. Define dual elements $e^1 , \cdots , e^b \in H^2(X, \partial X; \mathbb{Z}) \otimes \mathbb{Q}$ as
\[ \langle e^i , [S_j] \rangle = \delta_{ij}, \]
where $1 \leq i, j \leq b$, and $\delta_{ij}$ is the Kronecker function. Every integral point $u \in H^2(X, \partial X; \mathbb{Z}) \otimes \mathbb{Q}$ can be written as 
\[ u = \alpha_1 e^1 + \cdots + \alpha_b e^b, \]
where $\alpha_i$ are integers. This is because $u$ being integral means that its evaluation against each element of $H_2(X, \partial X; \mathbb{Z})$ is an integer. In particular,
$\alpha_i = \langle u, [S_i] \rangle$ is an integer. Assume that the dual norm of $u$ is at most one. By definition of the dual norm, for each $1 \leq i \leq b$ we have:
\[ |\langle u , [S_i] \rangle| \leq x([S_i]) = x_s([S_i]), \]
where $x([S_i])$ and $x_s([S_i])$ are the Thurston norm and the singular Thurston norm of $[S_i]$, and the last equality is by Theorem \ref{singular-norm}. Since $|\chi_-(S_i)| \leq m$, we have 
\[ x_s([S_i]) \leq m. \]
Combining the two inequalities implies that
\[ |\alpha_i| = |\langle u , [S_i] \rangle| \leq x([S_i]) = x_s([S_i]) \leq m. \]
Since $-m \leq \alpha_i \leq m$ is an integer, there are at most $2m+1$ possibilities for each coordinate of the tuple $(\alpha_1, \cdots , \alpha_b)$. Therefore the number of possibilities for $u$ is at most $(2m+1)^b$.
\end{proof}

\theoremstyle{theorem}
\newtheorem*{number of faces}{Theorem \ref{number of faces}}
\begin{number of faces}
Let $X$ be a compact orientable 3-manifold, and let $m$ be a natural number. Assume that there exist properly immersed oriented surfaces $S_1 , \cdots, S_b$ in $X$ such that their homology classes form a basis for $H_2(X, \partial X; \mathbb{R}) $, and for each $1 \leq i \leq b$ we have $|\chi_-(S_i)| \leq m$. Denote by $W$ the subspace of $H^1(X  ; \mathbb{R})$ with trivial Thurston norm. 
The number of facets of the unit ball for the induced Thurston norm on $H^1(X  ; \mathbb{R})/W$ is at most $(2m+1)^b$, where $b = b_1(X)$ is the first Betti number of $X$. 
Hence, the number of vertices is at most $(2m+1)^{b^2}$ and the number of faces is at most $b(2m+1)^{b^2}$.
\end{number of faces}

\begin{proof}
Note we have identified $H_2(X, \partial X ; \mathbb{Z})$ with $H^1(X ; \mathbb{Z})$ using Poincar\'{e} duality. Facets of the unit ball for $H^1(X; \mathbb{R})/W$ correspond to the vertices of the dual ball. As the vertices of the dual ball are integral and have dual norm equal to one, the number of them is at most $(2m+1)^b$ by Theorem \ref{lattice points}. This proves the first part of the theorem.

Let $d$ be the dimension of $H^1(X; \mathbb{R})/W$; hence $d \leq b$. Every $k$-dimensional face of the unit ball is the intersection of $(d-k)$ facets. Hence, the number of $k$-dimensional faces is at most 
\[ {(2m+1)^b \choose d-k}.\]
As a result, the total number of faces is at most 
\[  { (2m+1)^b \choose 1}+ {(2m+1)^b \choose 2}+ \cdots + {(2m+1)^b \choose b}, \]
which is bounded above by $b (2m+1)^{b^2}$. In particular, the number of vertices is at most $(2m+1)^{b^2}$.
\end{proof}

\section{A basis for the homology of a knot complement with small Thurston complexity} 

Let $\Gamma$ be a finite graph piecewise linearly embedded in $S^3$ and let $L$ be a framed link in the complement of $\Gamma$. Let $M$ be the compact orientable 3-manifold given by removing an open regular neighbourhood of $\Gamma$ and performing surgery along $L$. We are considering a knot $K$ in $M$ given by a diagram for $K \cup \Gamma \cup L$. In this section, we show how to compute bases for $H^1(M)$ and $H^1(M - N^\circ(K))$. From these, we will be able to construct a basis for $H_2(M - N^\circ(K), \partial M \cup \partial N(K))$ with relatively small Thurston complexity.

Our first step is to construct bases for $H^1(S^3 - N^\circ(\Gamma \cup L))$ and $H^1(S^3 - N^\circ(K \cup \Gamma \cup L))$ using the following lemma.

\begin{lem}
\label{Lem:GraphComplement}
Let $G$ be a finite graph piecewise linearly embedded in $S^3$, possibly with multiple edges between vertices and edge loops. Then $H^1(S^3 - N^\circ(G))$ has the following basis. Pick a maximal forest $F$ in $G$. For each edge $e\in G - F$, orient it in some way and let $L_e$ be the knot that starts at the initial vertex of $e$, runs along $e$ and then back to the start of $e$ through an embedded path in $F$. Let $\psi_e$ be the homomorphism $\pi_1(S^3 - N^\circ(G)) \rightarrow \mathbb{Z}$ that sends a loop in $S^3 - N^\circ(G)$ to its linking number with $L_e$. Then $\{ \psi_e : e \textrm{ is an edge of } G - F \}$ forms an integral basis for $H^1(S^3 - N^\circ(G))$.
\end{lem}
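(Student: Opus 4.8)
The statement is a standard consequence of Alexander duality plus the structure of graph complements, so I would set up an induction on the number of independent cycles in $G$, i.e. on $|E(G) - F|$. First I would compute the rank: since $S^3 - N^\circ(G)$ deformation retracts onto a 2-complex and $G$ is homotopy equivalent to a wedge of $k = |E(G)| - |V(G)| + (\text{number of components})$ circles, Alexander duality gives $H^1(S^3 - N^\circ(G);\mathbb{Z}) \cong H_1(S^3 - N^\circ(G);\mathbb{Z}) \cong \mathbb{Z}^k$, and $k$ is exactly the number of edges in $G - F$ for any maximal forest $F$. So the claim is that the $k$ specified classes $\psi_e$ form a \emph{basis}, not merely a spanning set or a linearly independent set — it suffices to check they are linearly independent, or dually that they span, since the rank already matches.

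**Key steps.** I would argue via the dual (homology) picture. For each edge $e \in G - F$, there is a natural loop $\mu_e$ in $S^3 - N^\circ(G)$: a small meridian circling the edge $e$ once. The collection $\{[\mu_e] : e \in G - F\}$ generates $H_1(S^3 - N^\circ(G);\mathbb{Z})$ — this is the standard Mayer–Vietoris / general position fact that meridians of the edges generate, and the meridians of edges in the forest $F$ are nullhomologous (one can push such a meridian off to infinity through the complementary region, since removing a forest does not create homology). Then I would compute the pairing matrix $(\langle \psi_e, [\mu_{e'}]\rangle)_{e, e' \in G-F}$, which by definition equals $(\mathrm{lk}(L_e, \mu_{e'}))$. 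Here $L_e$ runs along $e$ and closes up through $F$; its linking number with the meridian $\mu_{e'}$ counts the signed number of times $L_e$ passes through a meridian disc of $e'$. Since $L_e$ traverses the edge $e$ exactly once and otherwise lies in the forest $F$ (which contributes meridians that are trivial), this matrix is the identity: $\mathrm{lk}(L_e, \mu_{e'}) = \delta_{e e'}$. An identity pairing matrix between a generating set and the $\psi_e$ forces the $\psi_e$ to be a basis of the dual lattice $H^1 = \mathrm{Hom}(H_1, \mathbb{Z})$ (and simultaneously shows the $[\mu_e]$ are a basis of $H_1$). That completes the proof.

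**Main obstacle.** The only genuinely delicate point is making precise that "meridians of edges in the forest $F$ are nullhomologous in $S^3 - N^\circ(G)$" and hence that the meridians of $G - F$ generate $H_1$. The clean way is to observe that $S^3 - N^\circ(G)$ deformation retracts (rel the meridians) onto $S^3 - N^\circ(G')$ where $G'$ is obtained by collapsing the forest $F$ to a point in each component — collapsing a tree is a homotopy equivalence of the complement, since a regular neighbourhood of a tree in $S^3$ is a ball. After this collapse $G'$ is a wedge of circles and loops, one per edge of $G - F$, and the result reduces to the classical computation that $H_1$ of the complement of a wedge of $k$ (possibly knotted and linked) circles is free of rank $k$, generated by the meridians, with linking number providing the dual basis. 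I would also need to check that the path in $F$ used to close up $e$ can be chosen embedded and that different choices give homologous $L_e$ (they differ by loops in $F$, whose linking with anything in the complement is detected only through $G-F$ — but $L_e$ and $L_e'$ closed through two paths in $F$ cobound an annulus union a disc in $N^\circ(F)$, so they are homologous in the complement of $G$, making $\psi_e$ well-defined). None of this is hard, but it is the part that needs care; the rest is the identity-matrix bookkeeping above.
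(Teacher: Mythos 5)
Your proposal is correct and follows essentially the same route as the paper: Alexander duality gives the rank $b_1(S^3 - N^\circ(G)) = b_1(G) = |E(G-F)|$, and the computation $\psi_e(\mu_{e'}) = \delta_{ee'}$ against edge meridians does the rest. The one point you flag as delicate --- that the meridians of the edges of $G-F$ generate $H_1$ --- is not actually needed: the paper instead observes that once the $\psi_e$ are known to be a rational basis (rank count plus linear independence from the pairing), any integral class $\psi = \sum_e \lambda_e \psi_e$ has $\lambda_e = \psi(\mu_e) \in \mathbb{Z}$, which gives integrality of the basis directly.
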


\begin{proof}
Note first that $\psi_e$ really is a homomorphism $\pi_1(S^3 - N^\circ(G)) \rightarrow \mathbb{Z}$ since any homotopically trivial loop is sent to zero. These homomorphisms form linearly independent elements of $H^1(S^3 - N^\circ(G))$ because $\psi_e$ evaluates to $1$ on the meridian of $e$, but evaluates to $0$ on the meridian of any other edge of $G - F$. By Alexander duality, $b_1(S^3 - N^\circ(G)) = b_1(G)$, which is equal to the number of edges in $G - F$. So, $\{ \psi_e : e \textrm{ is an edge of } G - F \}$ forms a rational basis for $H^1(S^3 - N^\circ(G); \mathbb{Q})$. In fact, it forms an integral basis for $H^1(S^3 - N^\circ(G))$, for the following reason. Any element $\psi \in H^1(S^3 - N^\circ(G))$ is a linear combination $\sum_e \lambda_e \psi_e$, where each $\lambda_e \in \mathbb{Q}$. Since $\psi$ is integral, its evaluation on the meridian of an edge $e$ of $G - F$ is integral. But this number is $\lambda_e$.
\end{proof}

We now consider how to compute $H^1(M)$. We obtain $M$ from $S^3 - N^\circ(\Gamma \cup L)$ by attaching solid tori, each of which can be viewed as a 2-cell and a 3-cell. So,  $H^1(M)$ can be viewed as the subgroup of $H^1(S^3 - N^\circ(\Gamma \cup L))$ consisting of those classes that evaluate to zero on each of the surgery slopes
of the framed link $L$. This subgroup can be expressed in terms of the \emph{generalised linking matrix} of $\Gamma \cup L$.

Recall that the \emph{linking matrix} for the oriented framed link $L = L_1 \cup \cdots \cup L_{|L|}$ is defined as the $|L| \times |L|$ symmetric matrix whose $(i,j)$ entry is equal to $\ell k (L_i , L_j)$ when $i \neq j$, and is equal to the framing of $L_i$ when $i = j$. Here $\ell k (L_i , L_j)$ is the linking number of $L_i$ and $L_j$, where $L_i$ and $L_j$ are considered as disjoint knots in $S^3$. More generally, we define the \emph{generalised linking matrix} $A$ of $\Gamma \cup L$ to have rows given by the components of $L$ and columns given by the components of $L$ and also by the edges of $\Gamma - F$, where $F$ is a maximal forest in $\Gamma$. For a component $L_i$ of $L$ and an edge $e$ of $\Gamma - F$, the corresponding entry of $A$ is $\ell k (L_i , L_e)$, where $L_e$ is the knot defined as in Lemma \ref{Lem:GraphComplement}. Similarly, for each component $L_i$ of $L$ and each component $L_j$ of $L$ with $i \not= j$, the corresponding entry of $A$ is $\ell k (L_i , L_j)$. Finally, when $L_i = L_j$, the corresponding entry of $A$ is the framing of $L_i$. 

Let $k$ be the number of columns of the generalised linking matrix. Thus, $k$ is the sum of the number of components of $L$ and the number of edges of $\Gamma - F$. In other words, $k = b_1( \Gamma \cup L)$. To make the notation more uniform, we identify the edges in $\Gamma - F$ by numbers $|L|+1 \leq j \leq k$ and refer to $L_e$ for $e \in \Gamma - F$ by $L_j$.

\begin{lem}
The cohomology group $H^1(M)$ is isomorphic to the subgroup of $H^1( S^3 - N^\circ(\Gamma \cup L)) = \mathbb{Z}^k$ given by the kernel of the generalised linking matrix.
\end{lem}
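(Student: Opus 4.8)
The plan is to combine the cell\nobreakdash-attachment description of $M$ with the explicit basis for $H^1(S^3 - N^\circ(\Gamma \cup L))$ supplied by Lemma \ref{Lem:GraphComplement}, and then to recognise the values of those basis cocycles on the surgery slopes as the entries of the generalised linking matrix. Write $Y = S^3 - N^\circ(\Gamma \cup L)$ and let $\gamma_1, \dots, \gamma_{|L|}$ be the surgery slopes, so $\gamma_i \subset \partial N(L_i)$ and, in $H_1(\partial N(L_i))$, we have $[\gamma_i] = f_i [\mu_i] + [\lambda_i]$, where $f_i$ is the diagrammatic framing of $L_i$, $\mu_i$ is its meridian, and $\lambda_i$ is its $0$\nobreakdash-framed longitude. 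First I would record the reduction already noted just before the statement: $M$ is obtained from $Y$ by attaching, along each $\gamma_i$, a $2$\nobreakdash-cell and then a $3$\nobreakdash-cell, so $H_1(M) \cong H_1(Y)/\langle [\gamma_1], \dots, [\gamma_{|L|}] \rangle$; applying $\mathrm{Hom}(-,\mathbb{Z})$ and using that all the groups involved are finitely generated with torsion\nobreakdash-free $H_0$, left\nobreakdash-exactness of $\mathrm{Hom}$ gives $H^1(M) \cong \{ \psi \in H^1(Y) : \psi([\gamma_i]) = 0 \text{ for all } i \}$.

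Next I would invoke Lemma \ref{Lem:GraphComplement}, applied to the graph $\Gamma \cup L$ — treating each component $L_i$ as a loop edge, and choosing a maximal forest $F'$ of $\Gamma \cup L$ that restricts to the fixed maximal forest $F$ of $\Gamma$ — to identify $H^1(Y) \cong \mathbb{Z}^k$ via the integral basis $\psi_1, \dots, \psi_k$, where $\psi_j(\delta) = \ell k(\delta, L_j)$, with $L_1, \dots, L_{|L|}$ the link components and $L_{|L|+1}, \dots, L_k$ the knots associated to the edges of $\Gamma - F$ (matching the numbering fixed in the preamble). Writing a general class as $\psi = \sum_{j=1}^{k} \lambda_j \psi_j$ with $\lambda_j \in \mathbb{Z}$, the problem reduces to computing $\psi_j([\gamma_i])$ and comparing with $A$.

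The computation is short. Since $\psi_j(\delta) = \ell k(\delta, L_j)$ depends only on the class of $\delta$ in $H_1(S^3 - L_j)$, we get $\psi_j([\gamma_i]) = f_i\, \ell k(\mu_i, L_j) + \ell k(\lambda_i, L_j)$. Here $\ell k(\mu_i, L_j) = 1$ if $j = i$ and $0$ otherwise, because every $L_j$ with $j \ne i$ and every $L_e$ with $e \in \Gamma - F$ is disjoint from a meridian disc of $L_i$; we have $\ell k(\lambda_i, L_i) = 0$ by the choice of the $0$\nobreakdash-framed longitude; and $\ell k(\lambda_i, L_j) = \ell k(L_i, L_j)$ for $j \ne i$, since $\lambda_i$ is isotopic to $L_i$ in the complement of $L_j$. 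Hence $\psi_j([\gamma_i])$ equals $f_i$ when $j = i$ and $\ell k(L_i, L_j)$ when $j \ne i$, which is exactly the $(i,j)$ entry $A_{ij}$ of the generalised linking matrix. Therefore $\psi([\gamma_i]) = \sum_{j=1}^{k} \lambda_j A_{ij} = (A\lambda)_i$, so $\psi$ vanishes on every surgery slope if and only if $A\lambda = 0$; combining with the first paragraph, $H^1(M) \cong \{ \lambda \in \mathbb{Z}^k : A\lambda = 0 \} = \ker A$, as claimed.

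I do not expect a genuine obstacle. The only points that need care are the framing convention — keeping the diagrammatic framing curve $\gamma_i$ distinct from the canonical $0$\nobreakdash-framed longitude $\lambda_i$, the discrepancy $f_i\mu_i$ between them being precisely what places the framings (rather than zeros) on the diagonal of $A$ — and the purely notational bookkeeping that lets each link component be regarded as a loop edge of $\Gamma \cup L$, so that Lemma \ref{Lem:GraphComplement} applies verbatim and the basis $\{\psi_j\}$ of $H^1(Y)$ lines up column\nobreakdash-by\nobreakdash-column with $A$.
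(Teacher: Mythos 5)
Your argument is correct and is essentially the paper's proof, just with the details spelled out: both identify $H^1(S^3-N^\circ(\Gamma\cup L))$ via the basis of Lemma \ref{Lem:GraphComplement}, characterise $H^1(M)$ as the classes vanishing on the surgery slopes, and observe that the evaluations $\psi_j([\gamma_i])$ are exactly the entries of the $i$th row of the generalised linking matrix. Your explicit computation splitting $\gamma_i = f_i\mu_i + \lambda_i$ is precisely the bookkeeping the paper leaves implicit.
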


\begin{proof} We have already identified $H^1( S^3 - N^\circ(\Gamma \cup L))$ by specifying an integral basis for it in Lemma \ref{Lem:GraphComplement}. The subgroup  $H^1(M)$ consists of those classes that evaluate to zero on each of the surgery slopes of the framed link $L$. When we write the surgery slope of $L_i$ as a linear combination of the basis elements, the coefficients are precisely the entries of the $i$th row of the generalised linking matrix.
\end{proof}

We now wish to compute $H^1(M - N^\circ(K))$. This can be viewed as containing $H^1(M)$ as a subgroup, using the long exact sequence of the pair $(M, M - N^\circ(K))$:
$$0 = H^1(M, M - N^\circ(K)) \rightarrow H^1(M) \rightarrow H^1(M - N^\circ(K)) \rightarrow H^2(M, M - N^\circ(K)) \rightarrow H^2(M).$$
Now, using excision and Poincar\'e duality,
$$H^2(M, M - N^\circ(K)) \cong H^2(N(K), \partial N(K)) \cong H_1(N(K))$$
and
$H^2(M) \cong H_1(M,\partial M)$. So, under our assumption that $K$ is homologically trivial, the map  $H^2(M, M - N^\circ(K)) \rightarrow H^2(M)$ is the trivial map. Therefore, $H^1(M - N^\circ(K))$ can be viewed as adding on a $\mathbb{Z}$ summand to $H^1(M)$. This summand is isomorphic to $H^2(M, M - N^\circ(K))$. 

We now wish to construct an integral basis for $H^1(M - N^\circ(K))$. Such a basis can be found by starting with an integral basis for $H^1(M)$ and taking its image in $H^1(M - N^\circ(K))$, and then adding one more element. This extra element must map to a generator for $H^2(M, M - N^\circ(K))$. We will build explicit cocycles representing this basis.

\theoremstyle{theorem}
\newtheorem*{basis}{Theorem \ref{basis}}
\begin{basis}
Let $M$ be a compact orientable 3-manifold given by removing an open regular neighbourhood of a (possibly empty) graph $\Gamma$ in $S^3$ and performing integral surgery on a framed link $L$ in the complement of $\Gamma$. Let $D$ be a fixed diagram for $\Gamma \cup L$ where the surgery slopes on $L$ coincide with the diagrammatic framing. Let $K$ be a homologically trivial knot in $M$, given by a diagram of $K \cup \Gamma \cup L$ that contains $D$ as a sub-diagram. Let $c$ be the total crossing number of $K$. Set $X = M - N^\circ(K)$ as the exterior of $K$ in $M $. There is an algorithm that builds a triangulation of $X$ with $O(c)$ tetrahedra, together with simplicial $1$-cocycles $\phi_1 , \cdots , \phi_b$ that form an integral basis for $H^1(X ; \mathbb{Z})$ with $\sum_i C_{\mathrm{una}}(\phi_i)$ at most $O(c^4)$. Moreover, the algorithm extends this triangulation of $X$ to a triangulation of $M$ with $O(c)$ tetrahedra, in which $K$ is simplicial. The algorithm runs in time polynomial in $c$. All the above implicit constants depend on the manifold $M$ and not the knot $K$.
\end{basis}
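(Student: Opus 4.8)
\emph{Proof outline.} The plan is to exhibit, inside a suitable triangulation $\mathcal{T}$ of $X$ with $O(c)$ tetrahedra, finitely many properly embedded surfaces that are Poincar\'e--Lefschetz dual to an integral basis of $H^1(X;\mathbb{Z})$ and that meet the $1$-skeleton of $\mathcal{T}$ in only $O(c^2)$ points; the signed intersection-number cochain of each such surface is then a simplicial $1$-cocycle, and these cocycles give a basis with $\sum_i C_{\mathrm{nat}}(\phi_i)=O(c^2)$. The starting point is the homological picture built up in Lemma \ref{Lem:GraphComplement} and the ensuing discussion of the generalised linking matrix: since $K$ is homologically trivial, $b:=b_1(X)=b_1(M)+1$ and $H^1(X;\mathbb{Z})=i^\ast H^1(M;\mathbb{Z})\oplus\mathbb{Z}\langle\phi_b\rangle$, where $i\colon X\hookrightarrow M$ is the inclusion, every class in $i^\ast H^1(M;\mathbb{Z})$ pairs trivially with a meridian $\mu_K$ of $K$, and $\langle\phi_b,\mu_K\rangle=1$. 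So it is enough to produce cocycles $\phi_1,\dots,\phi_{b-1}$ dual to surfaces representing a basis of $i^\ast H^1(M;\mathbb{Z})$, together with a cocycle $\phi_b$ dual to a surface that meets $\partial N(K)$ in a single longitude.

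First build the triangulation. The given diagram of $K\cup\Gamma\cup L$ has $O(c)$ crossings, because every crossing not involving $K$ belongs to the fixed diagram $D$. A standard construction produces, in polynomial time, a triangulation of $S^3-N^\circ(K\cup\Gamma\cup L)$ with $O(c)$ tetrahedra; gluing in triangulated solid tori along $L$ with the prescribed diagrammatic slopes --- each requiring only $O(1)$ tetrahedra, since $D$ and the slopes are fixed --- yields a triangulation $\mathcal{T}_0$ of $X$ with $O(c)$ tetrahedra. Next, the surfaces for the $i^\ast H^1(M)$ part. As $M$ and $D$ are fixed, the generalised linking matrix and hence an integral basis $\bar\phi_1,\dots,\bar\phi_{b-1}$ of $H^1(M;\mathbb{Z})$ are fixed, and each $\bar\phi_j$ is Poincar\'e dual to a fixed properly embedded surface $\bar S_j\subset M$ of complexity $O(1)$. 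Place $\bar S_j$ in a position adapted to the diagram; then it meets $K$ transversely in $O(c)$ points whose signs sum to $\langle\bar\phi_j,[K]\rangle=0$. Delete a small disc of $\bar S_j$ around each of these points and tube the resulting boundary circles together in oppositely signed pairs using disjoint annuli in $\partial N(K)$. The outcome is a properly embedded surface $S_j\subset X$ of complexity $O(c)$; the deletions and tubings preserve the homology class relative to $M$ and keep $\langle PD[S_j],\mu_K\rangle=0$, so $PD[S_j]=i^\ast\bar\phi_j$.

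It remains to construct the surface $\Sigma\subset X$ dual to $\phi_b$, meeting $\partial N(K)$ in a single longitude. From the diagram compute the linking numbers of $K$ with the fixed loops generating $H^1(S^3-N^\circ(\Gamma\cup L))$; homological triviality of $K$ in $M$ means $[K]$ lies in the span of the surgery slopes inside $H_1(S^3-N^\circ(\Gamma\cup L))$, and solving the corresponding linear system --- whose coefficient matrix depends only on $M$ --- writes $[K]=\sum_i c_i[\lambda_i]$ with $|c_i|=O(c)$. Apply Seifert's algorithm to the sub-diagram of $K$ to obtain a surface in $S^3$ bounded by $K$ of complexity $O(c)$, make it transverse to $\Gamma\cup L$, delete neighbourhoods of its intersections with $\Gamma\cup L$, and correct it near $L$ (using the framings and the integers $c_i$) so that its boundary curves on $\partial N(L)$ become surgery slopes, which are then capped off by meridian discs of the surgery solid tori; homological triviality of $K$ in $M$ is exactly what makes this correction possible with the complexity still $O(c)$. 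Pushing off $N(K)$ gives a properly embedded $\Sigma\subset X$ of complexity $O(c)$ with $\langle PD[\Sigma],\mu_K\rangle=1$; set $\phi_b:=PD[\Sigma]$. Finally subdivide $\mathcal{T}_0$ to a triangulation $\mathcal{T}$ of $X$, still with $O(c)$ tetrahedra, simultaneously adapted to the finitely many surfaces $S_1,\dots,S_{b-1},\Sigma$, so that each meets the $1$-skeleton of $\mathcal{T}$ in $O(c^2)$ points. Taking $\phi_j$ to be the signed intersection-number cocycle of $S_j$ (respectively $\Sigma$) gives simplicial $1$-cocycles with $C_{\mathrm{nat}}(\phi_j)\le\#(S_j\cap\mathcal{T}^{(1)})=O(c^2)$, and by the splitting above $\phi_1,\dots,\phi_b$ is an integral basis of $H^1(X;\mathbb{Z})$. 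Every step runs in time polynomial in $c$, and all implied constants depend only on $M$ and $D$.

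The main obstacle is the construction of $\Sigma$. Unlike the classes pulled back from the fixed manifold $M$, the knot $K$ need not bound in $S^3$, so one must genuinely use the surgery description together with the homological triviality of $K$ in $M$ to cap off the boundary curves left on $\partial N(L)$, all while keeping the complexity linear in $c$; it is essentially this step that is responsible for the bound being $O(c^2)$ rather than $O(c)$. A secondary technical issue is producing a single $O(c)$-tetrahedron triangulation of $X$ adapted to all the surfaces at once while keeping the number of edge-intersections polynomial; this has to be arranged directly, since replacing a surface by a normal representative could introduce exponential weight.
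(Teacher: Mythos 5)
Your homological blueprint is the same as the paper's: the same $O(c)$ triangulation of $S^3-N^\circ(K\cup\Gamma\cup L)$ extended over the surgery solid tori, the identification of $H^1(M)$ with the kernel of the generalised linking matrix, and the extra class detected by the meridian of $K$ whose existence is equivalent to the solvability of $A\theta=-\kappa$. Where you diverge is in how the cocycles are actually produced: you build explicit properly embedded dual surfaces (punctured-and-tubed copies of fixed surfaces for the $i^\ast H^1(M)$ part, and a Seifert-algorithm surface corrected near $L$ for $\phi_b$) and then dualise, whereas the paper never constructs a surface at all. It fixes a maximal tree $T$ in the $1$-skeleton, sets the cocycle to zero on $T$, and defines it on each remaining edge $e$ by $\langle c_\theta,e\rangle=\ell k(\ell_e,K)+\sum_i\theta_i\,\ell k(\ell_e,L_i)$ (and similarly $\langle c_\beta,e\rangle=\sum_i\beta_i\,\ell k(\ell_e,L_i)$ for $\beta\in\ker A$), where $\ell_e$ is the tree-loop through $e$. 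Since each edge is straight, each linking number is read off the diagram in polynomial time and is $O(c)$, and the cocycle condition and the extension over the filling tori are immediate from $A\beta=0$, resp.\ $A\theta=-\kappa$. This is precisely the device that makes the $C_{\mathrm{nat}}$ bound and the running time fall out with no geometry to control.

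Your surface route is viable in principle but, as written, the quantitatively hard steps are asserted rather than proved, and you have in effect deferred them to the final ``adapted triangulation'' step that you flag as unresolved. Concretely: (i) the ``correction near $L$'' of the Seifert surface is the whole content of the construction of $\Sigma$ --- one must add $\theta_i$ punctured copies of fixed surfaces so that the total boundary class on each $\partial N(L_i)$ becomes a multiple of the surgery slope, and then bound the number of intersections of the resulting (no longer fixed, $O(c)$-sheeted) object with the $O(c)$ edges of the triangulation; your claim that $\Sigma$ has ``complexity $O(c)$'' conflates $\chi_-$ with the weight, and it is the weight that controls $C_{\mathrm{nat}}(\phi_b)$; (ii) the same intersection-counting is needed, but not carried out, for the tubed surfaces $S_j$ and for the assertion that a fixed $\bar S_j$ meets $K$ in only $O(c)$ points. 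On the other hand, your ``secondary technical issue'' is not actually an issue: no subdivision or normalisation is needed, because for any properly embedded oriented surface transverse to the $1$- and $2$-skeleta of $\mathcal{T}_0$ the signed edge-intersection cochain is already a simplicial cocycle (each $2$-simplex meets the surface in arcs whose endpoints cancel in pairs). If you want to salvage the surface-based argument, the work to be done is exactly the positioning and counting in (i) and (ii); alternatively, replacing the surfaces by the tree-loop linking-number formulas above eliminates the problem entirely.
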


\begin{proof}

\textbf{Step 1:} Building a triangulation of $S^3 - N^\circ(K \cup \Gamma \cup L)$. \\

We view $S^3$ as the union of $\mathbb{R}^3$ and a point at infinity. We will arrange for $K \cup \Gamma \cup L$ to sit inside $\mathbb{R}^3$ as specified by the diagram. Thus, the vertical projection map $\mathbb{R}^3 \rightarrow \mathbb{R}^2$ onto the first two co-ordinates will project $K \cup \Gamma \cup L$ onto the underlying planar graph specified by the diagram. Our triangulation will have the following properties:
\begin{enumerate}
\item The number of tetrahedra is bounded above by a linear function of $c$.
\item Each edge of the triangulation is straight in $\mathbb{R}^3$.
\item The meridian of each component of $K \cup L$ and of each edge of $\Gamma$ is simplicial, as is the surgery slope of each component of $L$.
\end{enumerate}
There are many possible ways to build this triangulation. We will follow the recipe given by Coward and the first author in Section 4 of \cite{CowardLackenby}. The triangulation provided by Theorem 4.3 of \cite{CowardLackenby} has all the required properties when $\Gamma = \emptyset$. We only need to generalise to the situation where $\Gamma \not= \emptyset$ and show that the triangulation can be constructed algorithmically in polynomial time, as a function of $c$. We briefly review the steps in Section 4 of \cite{CowardLackenby}.

Step 1 is to embed the underlying planar graph $G$ of the diagram into $\mathbb{R}^2$ as a union of straight arcs, as follows. We first modify $\Gamma$ by expanding each vertex of $\Gamma$ with valence more than $3$ into a tree, so that each of the new vertices has valence exactly $3$. This does not change the exterior of $K \cup \Gamma \cup L$. Let $\overline{G}$ be the graph obtained from $G$ by collapsing parallel edges to a single edge and removing edge loops. F\'ary's theorem says that $\overline{G}$ has an embedding in $\mathbb{R}^2$ where each edge is straight \cite{fary1948straight}; this was proved independently by Wagner \cite{wagner1936bemerkungen} and Stein \cite{stein1951convex} as well. Such an embedding can be found in polynomial time using, for example, the algorithms of \cite{deFPachPollack} or \cite{Schnyder}. We place this embedded graph into the interior of a square $Q$. 
Now reinstate the parallel edges of $G$ with 2 straight arcs each
and the edge loops of $G$ with 3 straight arcs. Step 2 is to replace each edge of $G$ by 4 parallel edges, replace each 2-valent vertex of $G$ by 4 vertices joined by 3 edges and replace each 4-valent vertex of $G$ by $16$ vertices arranged in a grid. Furthermore, each 3-valent vertex of $G$ is replaced by a triangle. This is triangulated by placing a vertex in its interior and coning from that point. The result is a graph $G_+$ where each edge is still straight (see Figure \ref{Fig:Gplus})  .

\begin{figure}[h]
\centering
\includegraphics[width=0.5\textwidth]{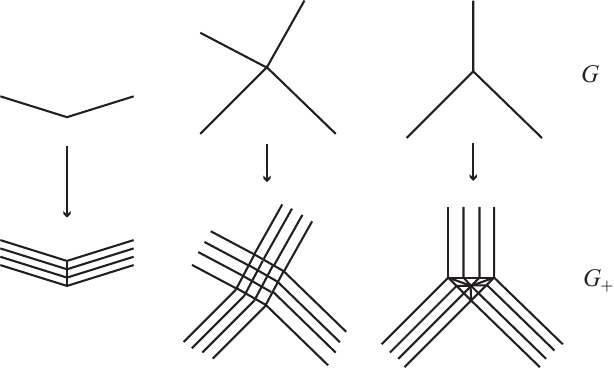}
\caption{Forming $G_+$ from $G$} \label{Fig:Gplus}
\end{figure}

In Step 3, we triangulate the complementary regions of $G_+ \cup \partial Q$ by adding new edges. Let $E$ be its 1-skeleton. In Step 4, we insert 4 copies of $Q$ into the cube $Q \times I$, one being the top face, one the bottom face, and two parallel copies between them. Insert $E \times I$ into the cube $Q \times I$. This divides the cube into convex balls. We triangulate each face of each ball by adding a vertex to its interior and then coning off, and we then triangulate each ball by adding a vertex to its interior and then coning. We now modify this triangulation as follows. Near each crossing, there are $27$ little cubes. We remove these and insert a new triangulation of the cube. A portion of this is shown in Figure \ref{fig:crosstri}. It is clear that if these are inserted in the correct way, we obtain a regular neighbourhood of $K \cup \Gamma \cup L$ as a simplicial subset of this triangulation. Removing the interior of this gives the required triangulation of the exterior of $K \cup \Gamma \cup L$. It is clearly constructible in polynomial time and has the required properties. \\

\begin{figure}[h]
\centering
\includegraphics[width=0.35\textwidth]{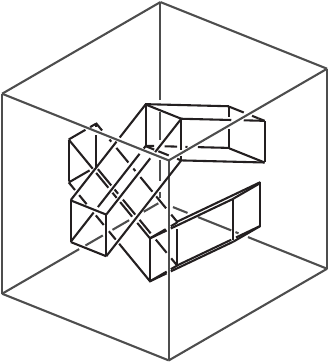}
\caption{The triangulation near each crossing} \label{fig:crosstri}
\end{figure}

\textbf{Step 2}: Building the triangulations of $M - N^\circ(K)$ and $M$.\\

The above  triangulation of $S^3 - N^\circ(K \cup \Gamma \cup L)$ extends to a triangulation of $M - N^\circ(K)$ in an obvious way. We need to attach a solid torus to each component of $\partial N(L)$. We do this by attaching a triangulated meridian disc along the surgery slope, triangulated by adding all the diagonals from an existing vertex on its boundary. We then attach on a 3-ball, which is triangulated as a cone with cone point an existing vertex on its boundary. It is clear that we can extend this triangulation of $M - N^\circ(K)$ to a triangulation of $M$, still using at most $O(c)$ tetrahedra, in which $K$ is simplicial. This process is again completed in polynomial time, and the number of tetrahedra remains bounded above by a linear function of $c$.\\


As described above, we can form a basis for $H^1(M - N^\circ(K))$ by
\begin{enumerate}
\item picking a basis for $H^1(M)$ and taking its image under the homomorphism $H^1(M) \rightarrow H^1(M - N^\circ(K))$ induced by inclusion;
\item adding one extra element that maps to a generator for $H^2(M, M - N^\circ(K))$. 
\end{enumerate}

\vspace{0.5cm}
\textbf{Step 3:} Defining $b_1(M)$ simplicial 1-cocycles on $S^3 - N^\circ(K \cup \Gamma \cup L)$. \\

We have already identified elements of $H^1(M)$  with integral solutions to the equation $A \beta = 0$, where $A$ is the generalised linking matrix for $\Gamma \cup L$. Therefore, consider an integral solution $\beta = (\beta_1 , \cdots , \beta_k)^T$ to the equation $A \beta =0$. The corresponding cocycle $\sum_{i=1}^k \beta_i \psi_i$ is a 1-cocycle on $H^1(S^3 - N^\circ(\Gamma \cup L))$ that evaluates to zero on each surgery slope of the framed link. We can restrict this to a cocycle on $S^3 - N^\circ(K \cup \Gamma \cup L)$, which represents an element of $H^1(M - N^\circ(K))$.

More specifically, define the 1-cocycle $c_{\beta}$ on $S^3 - N^\circ(K \cup \Gamma \cup L)$ as follows. Let $T$ be a maximal tree in the 1-skeleton of the triangulation. For every edge $e \in T$ define $\langle c_{\beta} , e \rangle = 0$. For any oriented edge $e \notin T$, construct a loop $\ell_e$ that starts at the initial vertex of $e$, runs along $e$ and then back to the start of $e$ through an embedded path in $T$. Since we are assigning $0$ to every edge contained in $T$, it should be clear that the numbers assigned to $e$ and $\ell_e$ are the same. Define 
\[ \langle c_{\beta} , e \rangle := \sum_{i=1}^{k} \hspace{1mm} \beta_i \hspace{1mm} \ell k(\ell_e , L_i),  \]
where $\beta_i$ are integers. It is clear that this forms a 1-cocycle since each term $\ell k(\ell_e , L_i)$ is a 1-cocycle. \\

\textbf{Step 4:} Extending the simplicial $1$-cocycles $c_\beta$ to the triangulation of $M - N^\circ(K)$.\\

The triangulation of $M - N^\circ(K)$ is obtained by gluing triangulated solid tori to the triangulation of $S^3 - N^\circ(K \cup \Gamma \cup L)$, such that the restrictions of both triangulations to their common boundary, $\partial N(L)$, agree with each other. The manifold $X$ is obtained by Dehn filling along $L_i$ for $1 \leq i \leq |L|$. 
We can extend the cocycles over the attached solid tori since we started with $\beta$ satisfying $A\beta=0$. This can be achieved with control over the values of newly added edges; see Step 6. It is also easy to see that $\langle c_\beta , m_K \rangle = 0$, where $m_K$ is the meridian of $K$. \\


\textbf{Step 5}: Constructing the extra cocycle.\\

We construct an extra 1-cocycle on $M - N^\circ(K)$ that will form a generator for the summand of $H^1(M - N^\circ(K))$ corresponding to $H^2(M, M - N^\circ(K))$. This extra element, together with the cocycles that formed from a basis for $H^1(M)$, will provide the required basis for $H^1(M - N^\circ(K))$.

Denote by $\kappa = (\kappa_1 , \cdots , \kappa_k)^T$ with $\kappa_i := \ell k (K , L_i)$ the vector encoding the linking numbers of $K$ with $L_i$. We claim that the condition on $K$ being homologically trivial in $M$ is equivalent to the linear equation $\theta A = - \kappa$ having an integral solution for $\theta$. The homology group $H_1(S^3 - N^\circ(\Gamma \cup L)  ; \mathbb{Z})$ is freely generated by the meridians $\mu_1 , \cdots , \mu_k$ encircling $L_1, \cdots, L_k$. For  $1 \leq i \leq |L|$, denote by $\lambda_i$ the longitude of $L_i$ that has zero linking number with $L_i$. Then $H_1(M; \mathbb{Z})$ is obtained by adding the relations $a_{ii} \hspace{1mm} \mu_i + \lambda_i =0$, one for each component  $L_i$ of $L$. The latter relation is equivalent to
\[ a_{ii} \hspace{1mm} \mu_i + \sum_{j \neq i} \ell k (L_i , L_j) \mu_j = \sum_j a_{ij} \hspace{1mm} \mu_j = 0. \]
Therefore, $K$ being trivial in $H_1(M  ; \mathbb{Z})$ is exactly the condition that $K$ is equal in $H_1(S^3 - N^\circ(\Gamma \cup L)  ; \mathbb{Z})$ to some integral linear combination of these relations. This is exactly the condition that $\theta A = -\kappa$ has an integral solution for $\theta$.

Let $\theta$ be any integral solution to the linear equation $\theta A = - \kappa$. Define the 1-cocycle $c_\theta$ similar to $c_\beta$ but with a slight modification to make the evaluation on the meridian of $K$ non-zero. More precisely  
\[ \langle c_\theta , e \rangle :=  \ell k (\ell_e , K) + \sum_{i=1}^{|L|} \hspace{1mm} \theta_i \hspace{1mm} \ell k(\ell_e , L_i).\]
The evaluation of $c_\theta$ on each surgery curve is zero, and the evaluation on the meridian of $K$ is equal to 1. It therefore is sent to the generator of $H^2(M, M - N^\circ(K))$, under the map $H^1(M - N^\circ(K)) \rightarrow H^2(M, M - N^\circ(K))$.\\

\textbf{Step 6:} Analysing the computational cost of the algorithm. \\

The number of edges of the triangulation of $S^3 - N^\circ(K \cup \Gamma \cup L)$ is $O(c)$. A spanning tree $T$ and the loops $\ell_e$ for $e \notin T$ can be found in polynomial time in the number of edges. The numbers $\ell k (\ell_e , L_i)$ can be computed as follows. We can construct the diagram $L \cup \Gamma \cup \ell_e$, since $\ell_e$ is a union of edges of the triangulation. Each edge of the triangulation is straight, and so when it is projected to the plane of the diagram, the image of $\ell_e$ is a union of straight arcs. We compute the linking number $\ell k (\ell_e , L_i)$ using the usual signed count over the crossings of $\ell_e$ with $L_i$. Each of the linking numbers is at most linear in the number of crossings $c$. This is because the triangulation of $S^3 - N^\circ(K \cup \Gamma \cup L)$ has $O(c)$ edges and each edge can contribute at most a constant number of crossings. Moreover the coordinates $\theta_i$ are at most linear in $c$ and can be computed in polynomial time, as $A$ is a fixed matrix. Therefore the evaluation of each constructed $1$-cocycle of $S^3 - N^\circ(K \cup \Gamma \cup L)$ on each edge has absolute value $O(c^2)$ and can be computed in polynomial time. Recall that the triangulation of $S^3 - N^\circ(K \cup \Gamma \cup L)$ is extended to the attached solid tori by first attaching discs whose boundaries are the simplicial surgery slopes, and then attaching solid balls. The attached discs and solid balls are themselves triangulated as the cone over one of their vertices. We can extend each $1$-cocycle over the attached discs by keeping its evaluation on each edge $O(c^3)$ in absolute value, since each edge of the relevant triangulated disc is homologous to a sum of $O(c)$ edges on its boundary. More generally, each $1$-cocycle on $S^3 - N^\circ(K \cup \Gamma \cup L)$ can be extended to the attached solid tori while keeping its evaluation on each edge $O(c^3)$ in absolute value. This is because each edge of an attached solid torus is homologous to a sum of $O(c)$ edges on its boundary torus. Extending these $1$-cocycles over the attached triangulated solid tori can be done in polynomial time. Moreover, the extension keeps the total number of tetrahedra linear in $c$. Since the evaluation of each $1$-cocycle on each edge has absolute value $O(c^3)$ and there are $O(c)$ edges, so $C_{\mathrm{una}}$ of each $1$-cocycle is $O(c^4)$. As $b_1(X) = b_1(M)+1$ is a fixed constant, the total $C_{\mathrm{una}}$ over all $1$-cocycles is also $O(c^4)$. 
\end{proof}

\section{Surfaces with trivial Thurston norm}



Recall from Section \ref{Sec:Normal} the definition of a fundamental normal surface. In this section, we will prove the following result.

\begin{thm}
	\label{Thm:FundamentalNormZero}
	Let $\mathcal{T}$ be a triangulation of a compact orientable irreducible 3-manifold $X$. If $X$ has any compressible boundary components, suppose that these are tori. The subspace of $H_2(X, \partial X; \mathbb{Z})$ with trivial Thurston norm is spanned by a collection of fundamental normal tori, annuli and discs.
\end{thm}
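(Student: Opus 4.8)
The plan is to reduce the statement to a single class. I will show that every $a\in H_2(X,\partial X;\mathbb{Z})$ with Thurston norm $x(a)=0$ can be written as a non-negative integer combination of homology classes of fundamental normal surfaces, each of whose components is a sphere, disc, torus or annulus. Since $X$ is irreducible, the sphere components bound balls and are therefore null-homologous, so they may be discarded; this leaves the fundamental normal discs, tori and annuli, of which there are only finitely many, and it shows that these generate the subgroup $\{a\in H_2(X,\partial X;\mathbb{Z}):x(a)=0\}$, which is exactly what the theorem asserts.

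The first step is to produce a good normal representative of $a$. Since $x(a)=0$, there is a properly embedded orientable surface $S_0$ with $[S_0]=a$ and $\chi_-(S_0)=0$, so every component of $S_0$ is a sphere, disc, torus or annulus. Normalising $S_0$ — performing the usual isotopies and compressions and discarding trivial sphere and boundary-parallel disc components — cannot increase $\chi_-$, since each compression raises the Euler characteristic by $2$, and it does not change the homology class, since a compression is a surgery within the homology class and the discarded components are null-homologous in the irreducible manifold $X$. Here the hypothesis that the compressible boundary components are tori is what keeps the normalisation under control, and it is in any case present in the relevant results of Tollefson and Wang. We obtain a normal surface $S$ with $[S]=a$ and $\chi_-(S)=0$, and among all such surfaces we take one of least weight; in the terminology of \cite{TollefsonWang}, $S$ is a least-weight taut normal surface in its homology class.

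Next I write $(S)=\sum_j n_j (F_j)$ with the $F_j$ fundamental normal surfaces and the $n_j$ positive integers; this is standard, by induction on weight. Each $F_j$ is automatically connected, since a disjoint union always satisfies $(S'\sqcup S'')=(S')+(S'')$ and hence is never fundamental. The crux is the claim that $\chi_-(F_j)=0$ for every $j$, and here I would invoke the theory of least-weight normal surfaces of Tollefson and Wang \cite{TollefsonWang}: because $S$ is a least-weight taut normal surface, any Haken decomposition of $(S)$ can be put into reduced form, its summands are again taut, and $\chi_-$ — not merely the ordinary Euler characteristic — is additive over the decomposition. Therefore $0=\chi_-(S)=\sum_j n_j\,\chi_-(F_j)$, and since every term is non-negative, $\chi_-(F_j)=0$ for all $j$. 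A connected orientable surface with $\chi_-=0$ is a sphere, disc, torus or annulus, so $a=[S]=\sum_j n_j[F_j]$ has the required form, and the theorem follows as explained above.

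The step I expect to be the main obstacle is the additivity of $\chi_-$ over the decomposition of $S$. For a general normal sum, only the genuine Euler characteristic $\chi$ is additive, so a priori a fundamental summand of negative Euler characteristic could be cancelled by summands of positive Euler characteristic; ruling this out is exactly what the least-weight hypothesis buys, via the reduced-form Haken-sum machinery of \cite{TollefsonWang}. The remaining work is to check that those results apply in the present setting — $X$ orientable and irreducible, compressible boundary components toral, and surfaces carrying boundary.
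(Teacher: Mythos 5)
Your proposal is correct and follows essentially the same route as the paper: both take a least-weight taut (lw-taut) normal representative of the class, decompose its vector into fundamental summands, and invoke the Tollefson--Wang machinery to conclude that the summands are again taut and oriented, that the homology classes and the norm add over the decomposition, and hence that each fundamental summand is a disc, annulus or torus. The only cosmetic difference is that you discard sphere summands at the end, whereas the paper rules them out directly because an lw-taut surface is homologically non-trivial in the irreducible manifold $X$.
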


As an immediate consequence, we obtain the following.

\theoremstyle{theorem}
\newtheorem*{Thm:BasisForW}{Theorem \ref{Thm:BasisForW}}
\begin{Thm:BasisForW}
	Let $\mathcal{T}$ be a triangulation of a compact orientable irreducible 3-manifold $X$. If $X$ has any compressible boundary components, suppose that these are tori. Then there is a collection $w_1, \cdots, w_r$ of integral cocycles that forms a basis for the subspace $W$ of $H^1(X; \mathbb{R})$ consisting of classes with Thurston norm zero with $\sum_i C_{\mathrm{dig}}(w_i)$ at most $O(|\mathcal{T}|^3)$.
\end{Thm:BasisForW}

\begin{proof}
	By Theorem \ref{Thm:FundamentalNormZero}, there is a collection of fundamental normal surfaces that forms a generating set for $W \cap H^1(X; \mathbb{Z})$. Some subset of this collection therefore forms a basis for $W$. Each normal coordinate of the fundamental normal surface $S$ is at most $t2^{7t+2}$, where $t = |\mathcal{T}|$, the number of tetrahedra of $\mathcal{T}$. This implies that $S$ intersects each edge of $\mathcal{T}$ at most $3t 2^{7t+2}$ times, since for a tetrahedron $\Delta$ containing the edge there are at most $3$ different normal disc types intersecting that edge. 
	Hence, when $S$ is oriented, the cocycle $w$ dual to $S$ has evaluation at most $3t2^{7t+2}$ on each edge. So $C_{\mathrm{dig}}(w)$ is at most $O(t^2)$. Since the number of $w_i$ is $O(t)$, the total $C_\mathrm{dig}$ is $O(t^3)$.
\end{proof}

We will prove Theorem \ref{Thm:FundamentalNormZero} using results of Tollefson and Wang \cite{TollefsonWang}. In that paper, $X$ was required to be irreducible and its compressible boundary components were required to be tori. It is for this reason that these are also hypotheses of Theorem \ref{Thm:FundamentalNormZero}.

\begin{definition}
Let $X$ be a compact orientable 3-manifold with a triangulation $\mathcal{T}$.
	A compact incompressible $\partial$-incompressible oriented normal surface $F$ properly embedded in $X$ is \emph{lw-taut} if
	\begin{enumerate}
		\item its homology class $[F]$ is non-trivial in $H_2(X, \partial X)$;
		\item it is $\chi_-$ minimising;
		\item there is no union of components of $F$ that is homologically trivial;
	\end{enumerate}
	and furthermore it has smallest weight among all incompressible $\partial$-incompressible normal surfaces in its homology class satisfying the above conditions.
\end{definition}



Recall from Section \ref{Sec:Normal} the definition of the projective solution space ${\mathcal P}(\mathcal{T})$. Recall also the notion of a normal surface being carried by a face of $\mathcal{P}(\mathcal{T})$.

\begin{definition} A face of $\mathcal{P}(\mathcal{T})$ is \emph{lw-taut} if every surface carried by the face is lw-taut.
\end{definition}

The following result was proved by Tollefson and Wang (Theorem 3.3 and Corollary 3.4 in \cite{TollefsonWang}).

\begin{thm} 
	\label{Thm:lwtaut}
	Let $\mathcal{T}$ be a triangulation of a compact orientable irreducible 3-manifold $X$. If $X$ has any compressible boundary components, suppose that these are tori. Let $F$ be an lw-taut surface and let $C$ be the minimal face of $\mathcal{P}(\mathcal{T})$ that carries $F$. Then $C$ is lw-taut. Furthermore, there are unique orientations assigned to the surfaces carried by $C$ such that if $G$ and $H$ are carried by $C$, then the normal sum $G+H$ satisfies $[G+H] = [G] + [H] \in H_2(X, \partial X)$ and $x([G+H]) = x([G]) + x([H])$.
\end{thm}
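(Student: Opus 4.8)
The plan is to follow Tollefson and Wang's analysis of normal (Haken) sums in the projective solution space. I first record the two elementary facts that drive the argument: for normal surfaces $G$ and $H$ the normal sum satisfies $\chi(G+H)=\chi(G)+\chi(H)$ and $w(G+H)=w(G)+w(H)$, since $G+H$ is obtained from $G\sqcup H$ by cut-and-paste (regular exchanges) along the trace curves inside the tetrahedra, an operation that preserves total Euler characteristic and merely redistributes intersections with the $1$-skeleton. I also use the fact recalled earlier in this section: a summand of a surface carried by $C$ is again carried by $C$; hence the surfaces carried by $C$ are closed under normal sum, and since $C$ is the \emph{minimal} face carrying $F$, the vector $(F)$ lies in the relative interior of the cone over $C$, so for any normal surface $G$ carried by $C$ there is an integer $N>0$ and a normal surface $G'$, also carried by $C$, with $N(F)=(G)+(G')$. (Note also that $H_2(X,\partial X;\mathbb{Z})\cong H^1(X;\mathbb{Z})$ is torsion-free, which will be convenient.)

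The first substantive step is to fix orientations on the vertex surfaces $V_1,\dots,V_k$ of $C$ so that every normal sum of copies of the $V_j$ representing a surface carried by $C$ is \emph{coherent}, meaning that every regular exchange in it respects the orientations. Existence is proved by contradiction: a regular exchange that reverses orientations occurs along some trace curve, and performing the \emph{oriented} exchange instead at an innermost such curve produces a normal surface in the same homology class, of no greater weight, and either of strictly smaller weight or containing a homologically trivial union of components --- contradicting the lw-tautness of $F$. This is precisely where irreducibility of $X$ and the hypothesis that compressible boundary components are tori are used, to rule out sphere and disc components being created by the exchanges. Uniqueness follows because the orientation of an lw-taut surface carried by $C$ restricts to the pieces it is cut into by the trace curves, and coherence then propagates the orientation to each $V_j$. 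Granting this, coherence yields $[G+H]=[G]+[H]$ for all $G,H$ carried by $C$, and, together with the fact that no new sphere or disc components are created, promotes $\chi(G+H)=\chi(G)+\chi(H)$ to $\chi_-(G+H)=\chi_-(G)+\chi_-(H)$.

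Next I would deduce that every surface $G$ carried by $C$ is lw-taut and that $x$ is additive on the cone over $C$. Pick $N>0$ and $G'$ with $N(F)=(G)+(G')$; this sum is coherent by the previous step, so $N[F]=[G]+[G']$ and $N\chi_-(F)=\chi_-(G)+\chi_-(G')$. Since $F$ is lw-taut, $x([F])=\chi_-(F)$, and hence
\[
N\chi_-(F)=x(N[F])=x([G]+[G'])\le x([G])+x([G'])\le \chi_-(G)+\chi_-(G')=N\chi_-(F),
\]
so equality holds throughout: $x([G])=\chi_-(G)$, and $x([G]+[G'])=x([G])+x([G'])$. Applying $x([S])=\chi_-(S)$ with $S=G+H$ (again carried by $C$) together with $\chi_-(G+H)=\chi_-(G)+\chi_-(H)$ gives the stated additivity $x([G+H])=x([G])+x([H])$. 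The remaining clauses of lw-tautness for $G$ --- that $[G]\ne 0$, that no union of components of $G$ is null-homologous, and that $G$ has least weight in its taut class --- are checked by transporting such a feature of $G$ through the sum $N(F)=(G)+(G')$ to contradict the corresponding property of $F$ or the minimality of $C$, using additivity of $w$ under coherent sums for the last clause.

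The step I expect to be the main obstacle is the orientation-coherence lemma of the second paragraph: proving that the vertex surfaces of $C$ can be simultaneously oriented so that every normal sum carried by $C$ is coherent, with no sphere or disc components created by the exchanges. This is not accessible by linear algebra on normal coordinates; it requires the careful innermost-trace-curve and innermost-disc surgery arguments of Tollefson and Wang, and it is exactly here that irreducibility of $X$ and the restriction that compressible boundary components be tori are indispensable.
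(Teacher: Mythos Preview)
The paper does not prove this theorem; it merely quotes it as Theorem~3.3 and Corollary~3.4 of Tollefson--Wang \cite{TollefsonWang}. So there is no proof in the paper to compare your proposal against. Your sketch is a faithful outline of the Tollefson--Wang argument (as you yourself acknowledge), and the overall architecture --- pull any $G$ carried by $C$ into a coherent sum $N(F)=(G)+(G')$, use additivity of $\chi$ and $w$ under normal sum, and squeeze $x([G])=\chi_-(G)$ between the two ends of the obvious inequality chain --- is correct.

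One point deserves tightening. You pass from $\chi(G+H)=\chi(G)+\chi(H)$ to $\chi_-(G+H)=\chi_-(G)+\chi_-(H)$ by asserting that no sphere or disc components are \emph{created} by the regular exchanges. That is not quite enough: you also need that $G$ and $H$ themselves have no sphere or disc components, and at this stage you have not yet established that $G$ and $H$ are lw-taut. The fix is to observe that any sphere or disc component $S_0$ of $G$ is itself carried by $C$ (summands of surfaces carried by $C$ are carried by $C$), hence appears as a summand in some $N(F)$; since $X$ is irreducible and compressible boundary components are tori, $S_0$ is null-homologous, and then the coherent sum $N(F)=(S_0)+(\text{rest})$ would force $NF$ to have a null-homologous union of components or to admit a weight-reducing surgery, contradicting lw-tautness of $F$. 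This is exactly the kind of innermost-surgery argument you already flagged as the crux, so the repair is consistent with your plan; just be aware that the ``no spheres or discs'' conclusion must be established for \emph{all} surfaces carried by $C$ before the $\chi_-$ bookkeeping goes through.
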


\begin{proof}[Proof of Theorem \ref{Thm:FundamentalNormZero}] Let $T$ consist of those fundamental annuli, tori and discs that lie in some lw-taut face. Consider any element of $H_2(X, \partial X; \mathbb{Z})$ with trivial Thurston norm. This is represented by an lw-taut surface $F$. Let $C$ be the minimal face of $\mathcal{P}(\mathcal{T})$ that carries $F$.  By Theorem \ref{Thm:lwtaut}, $C$ is an lw-taut face. Now, $F$ is a normal sum of fundamental surfaces $G_1,  \cdots, G_n$ that are also carried by $C$. By Theorem \ref{Thm:lwtaut}, they are all oriented surfaces. Since they are lw-taut and $X$ is irreducible, no $G_i$ is a sphere. By Theorem \ref{Thm:lwtaut}, $[F] = [G_1] + \cdots + [G_n]$ in $H_2(X, \partial X)$ and $0 = x([F]) = x([G_1]) + \cdots + x([G_n])$. Since Thurston norm is always non-negative, this implies that $x([G_i]) = 0$ for each $i$. As the $G_i$ are oriented and lw-taut, they are discs, annuli and tori, and hence they are elements of $T$.
\end{proof}

\section{Computational complexity of \textsc{Thurston norm ball}}

In this section, we analyse the decision problem \textsc{Thurston norm ball for 3-manifolds with $b_1 \leq B$} that was mentioned in the Introduction. We now define it precisely. The input is a triangulation $\mathcal{T}$ for a compact orientable 3-manifold $X$ with first Betti number $b_1(X) \leq B$, and a collection of integral simplicial 1-cocycles $\{ \phi_1 , \cdots, \phi_b \}$ that forms a basis for $H^1(X ; \mathbb{R})$. The problem asks to compute the unit ball for the Thurston semi-norm. Here we have identified $H^1(X  ; \mathbb{R})$ with $H_2(X , \partial X ; \mathbb{R})$ using Poincar\'{e} duality. The output consists of the following two sets of data:\\

1) A collection of integral cocycles that forms a basis for the subspace $W$ of $H^1(X; \mathbb{R})$ with Thurston norm zero. These are written as rational linear combinations of the given cocycles $\{ \phi_1 , \cdots , \phi_b \}$. Denote by $p$ the projection map from $H^1(X ; \mathbb{R})$ to $H^1(X ; \mathbb{R})/W$.\\

2) A finite set of points $V \subset H^1(X ; \mathbb{Q})$ such that $p(V)$ is the set of vertices of the unit ball of $H^1(X ; \mathbb{R})/W$, together with a list $\mathcal{F}$ of subsets of $V$. The set $\mathcal{F}$ is the list of faces of the unit ball for $H^1(X ; \mathbb{R})/W$. In other words, for $F\in \mathcal{F}$ the set
\[ \{ p(v) \hspace{2mm} | \hspace{2mm} v \in F \}, \]
forms the set of vertices of some face of the unit ball for $H^1(X ; \mathbb{R})/W$. Moreover, this covers all the faces as we go over all the elements of $\mathcal{F}$. Thus, the unit ball of $H^1(X; \mathbb{R})$ is the inverse image of the unit ball of $H^1(X ; \mathbb{R})/W$ under the projection map $p$.

The complexity of the input is defined to be $|\mathcal{T}| + \sum_i C_{\mathrm{una}}(\phi_i)$. Recall that $|\mathcal{T}|$ is the number of tetrahedra of $\mathcal{T}$. As discussed in the Introduction, the fact that the complexity of $\phi_i$ is measured using $C_{\mathrm{una}}$ rather than $C_{\mathrm{dig}}$ is definitely not standard. In order to simplify the notation a little, we let $\Phi$ be the matrix with columns $\phi_1, \cdots, \phi_b$. More specifically, it has $b$ columns and has a row for each oriented edge of $\mathcal{T}$, and its $(i,j)$ entry is the evaluation of $\phi_j$ on the $i$th edge. So $C_{\mathrm{una}}(\Phi) =  \sum_i C_{\mathrm{una}}(\phi_i)$.



\theoremstyle{theorem}
\newtheorem*{thurston ball}{Theorem \ref{thurston ball}}
\begin{thurston ball}
Fix an integer $B \geq 0$. The problem \textsc{Thurston norm ball for 3-manifolds with $b_1 \leq B$} lies in \textbf{FNP}, where $b_1$ denotes the first Betti number.
\end{thurston ball}



We will prove this over the next two sections. In this section, we will consider the following restricted version of the problem.

In \textsc{Thurton norm ball for irreducible boundary-irreducible 3-manifolds with $b_1 \leq B$}, we consider compact, orientable, irreducible, boundary-irreducible 3-manifolds $X$. We allow $X$ to be disconnected. Thus, the input is a triangulation $\mathcal{T}$ for $X$ with first Betti number $b_1(X) \leq B$, and a collection of simplicial integral 1-cocycles $\{ \phi_1 , \cdots, \phi_b \}$ that forms a basis for $H^1(X; \mathbb{R})$.  The output is the data in (1) and (2) above.

\begin{thm}
\label{Thm:MainThmIrred}
\textsc{Thurton norm ball for irreducible boundary-irreducible 3-manifolds with $b_1 \leq B$} is in \textbf{FNP}.
\end{thm}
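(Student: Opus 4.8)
The strategy is to certify the Thurston norm ball by guessing, non-deterministically, all the combinatorial data that describes it, and then verifying each piece using results already available to us — principally Theorem \ref{lackenby} (that computing the Thurston norm of a single class is in \textbf{NP}), Theorem \ref{Thm:BasisForW} (that a small basis for $W$ exists), and Theorem \ref{number of faces} together with Theorem \ref{basis}-style bounds to control the combinatorial size of the answer. The key point is that, because $B$ is fixed, the ball lives in a vector space of bounded dimension, so it has boundedly many vertices and faces, and each is described by rationals of polynomially-bounded size; thus the whole output has polynomial size and can be guessed.

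\textbf{Step 1: produce the basis for $W$.} First I would apply Theorem \ref{Thm:BasisForW} to obtain integral cocycles $w_1,\dots,w_r$ spanning $W$ with $\sum_i C_{\mathrm{dig}}(w_i) = O(|\mathcal{T}|^2)$; expressing them in the given basis $\phi_1,\dots,\phi_b$ costs only a Bareiss-type linear-algebra computation, keeping the bit-size polynomial in $|\mathcal{T}| + C_{\mathrm{nat}}(\Phi)$. One must verify that the guessed $w_i$ really have Thurston norm zero — this is a finite collection of instances of \textsc{Thurston norm of a homology class} with target value $0$, each certifiable in \textbf{NP} by Theorem \ref{lackenby} — and that together with a complementary set of $\phi_j$'s they span all of $H^1(X;\mathbb{R})$, which is pure linear algebra. (Here is where the irreducibility and boundary-irreducibility hypotheses are used: they are exactly what Theorem \ref{Thm:FundamentalNormZero}/\ref{Thm:BasisForW} require.)

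\textbf{Step 2: guess and verify the vertices and faces.} Working in the quotient $H^1(X;\mathbb{R})/W$ of dimension $d \le b \le B$, I would guess the finite list $V$ of rational points projecting to the vertices of the unit ball, and the list $\mathcal{F}$ of subsets forming faces. By Theorem \ref{number of faces}, once we know the ball has boundedly many facets and vertices, the only worry is bit-size; this is controlled because the vertices of the \emph{dual} ball are integral (Thurston) and of bounded sup-norm, so the vertices of the ball itself are solutions of bounded linear systems with bounded-size integer coefficients, hence have polynomially bounded $C_{\mathrm{dig}}$. To verify: (a) each guessed vertex $p(v)$ has Thurston norm exactly $1$ — again finitely many instances of Theorem \ref{lackenby}, after clearing denominators; (b) the polytope $P$ they span is a valid unit ball, i.e. every lattice point of the conjectural dual polytope $P^{*}$ has dual norm $\le 1$ and the claimed facets are genuinely facets — the first is a finite check since $P^{*}$ has boundedly many lattice points (each tested by Theorem \ref{lackenby} applied to the corresponding homology class), and the face-lattice relations are a finite combinatorial/linear-algebraic verification using the duality $\dim F + \dim F^{\triangle} = d-1$; (c) no class of norm strictly less than indicated exists on the relevant rays — but this follows once we know $x$ is linear on each cone over a facet, which is forced by having exhibited the correct facet structure and verified the vertex norms, since the Thurston norm ball is a polytope (Thurston) and a convex function pinned to value $1$ at enough points and $\le 1$ everywhere on $P^*$-dual data is determined.

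\textbf{The main obstacle.} The delicate part is not the \textbf{NP} verification of individual norm values — that is handed to us by Theorem \ref{lackenby} — but rather ensuring that the \emph{entire} output has polynomial bit-size and that a \emph{polynomial-length} certificate suffices to pin down the ball uniquely. The number of faces is bounded by Theorem \ref{number of faces}, but to invoke it one needs a bound $m$ on $\chi_-$ of a homology basis of $H_2(X,\partial X;\mathbb{R})$ in terms of the input complexity; producing such surfaces with $\chi_-$ polynomially bounded in $|\mathcal{T}| + C_{\mathrm{nat}}(\Phi)$ — and hence justifying why $C_{\mathrm{nat}}$ rather than $C_{\mathrm{dig}}$ is the right measure here — is the crux, and is where the argument must work harder. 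Given such an $m$, Theorem \ref{lattice points} bounds the integral points of the dual ball, every such point can be norm-certified, and the ball is reconstructed as the dual of their convex hull; the remaining work is bookkeeping to confirm the face-lattice data and to assemble everything into a single \textbf{FNP} verifier running in time polynomial in $|\mathcal{T}| + C_{\mathrm{nat}}(\Phi)$.
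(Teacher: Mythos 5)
Your overall architecture matches the paper's: non-deterministically guess a basis for $W$, a vertex set $V$, and a face list $\mathcal{F}$; certify individual Thurston norms with Theorem \ref{lackenby}; and control the size of the certificate via Theorems \ref{number of faces} and \ref{lattice points}, applied to surfaces dual to the $\phi_i$ with $\chi_-$ of order $C_{\mathrm{nat}}(\Phi)$. (The construction you flag as ``the crux'' is in fact routine: from a simplicial cocycle $\phi_i$ one draws $|\phi_i(e)|$ normal arcs on each edge $e$, fills in normal triangles and squares, and obtains an embedded dual surface with $\chi_-(S_i)=O(C_{\mathrm{nat}}(\phi_i))$; this is precisely why $C_{\mathrm{nat}}$ rather than $C_{\mathrm{dig}}$ is the right measure.)

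The genuine gap is in your completeness check, item (b). You propose to certify that the guessed polytope is the whole unit ball by checking that every lattice point of the conjectural dual polytope $P^{\ast}$ has dual norm at most $1$, ``each tested by Theorem \ref{lackenby}''. But Theorem \ref{lackenby} certifies the value $x(a)$ of a \emph{single} class $a$; the statement $x^{\ast}(u)\leq 1$ for $u$ in the dual space is the universally quantified condition $\langle u,a\rangle \leq x(a)$ for \emph{all} $a$, which is co-\textbf{NP}-flavoured relative to the norm oracle and is not an instance of \textsc{Thurston norm of a homology class}. The vertex-norm checks only give you that the guessed polytope is \emph{contained} in the unit ball; a missing vertex or facet would go undetected, and your item (c) presupposes exactly the ``correct facet structure'' that is in question. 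The paper circumvents this with a purely primal, topological argument: it certifies that each guessed face genuinely lies in the boundary of the unit ball by verifying the single equality $x(K_1u_1+\cdots+K_su_s)=K_1+\cdots+K_s$ (convexity then pins the entire convex hull of $u_1,\dots,u_s$ to the unit sphere of the norm), triangulates the union of the guessed top-dimensional faces, verifies that the resulting complex $P$ is a closed oriented pseudo-manifold (purity, non-branching, connectivity, orientability), and shows that the radial projection $\pi\circ i\colon P\to S^{d-1}$ is injective (via linear programming on cones) and of non-zero degree, hence surjective. Surjectivity of this map is what rules out missing vertices. That degree argument, or some substitute for it, is the ingredient your certificate is missing.
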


\begin{proof}
Let $d = \dim (H^1(X; \mathbb{R})/W)$, and denote by $B_{\bar{x}}$ the unit ball of the induced Thurston norm $\overline{x}$ on $H^1(X; \mathbb{R})/W$: 
\[B_{\bar{x}} = \{ v \in H^1(X ; \mathbb{R})/W \hspace{3mm} | \hspace{3mm} \overline{x}(v)\leq 1 \}. \]
Then $B_{\bar{x}}$ is a convex polyhedron. The boundary, $\partial B_{\bar{x}}$, inherits a facial structure from $B_{\bar{x}}$, where the faces of $\partial B_{\bar{x}}$ correspond to faces of $B_{\bar{x}}$ except for the face $B_{\bar{x}}$ itself. In particular, top-dimensional faces of $\partial B_{\bar{x}}$ correspond to facets of $B_{\bar{x}}$, and from now on a top-dimensional face refers to a top-dimensional face of $\partial B_{\bar{x}}$. The plan of the proof is as follows: 
\begin{enumerate}
	\item A basis for the subspace $W$ consisting of classes with Thurston norm zero is given to us non-deterministically. 
	\item The list of vertices $V$ and faces $\mathcal{F}$ is given to us non-deterministically.
	\item We verify that for each element $F \in \mathcal{F}$, the vertices of $F$ actually lie on the same face of $\partial B_{\bar{x}}$. 
	\item Let $P$ be the space obtained by patching together geometric realisations of given top-dimensional elements of $\mathcal{F}$ along their common boundaries. We have the maps 
	\[ P \xrightarrow{i} \partial B_{\bar{x}} \xrightarrow{\pi}S^{d-1}, \]
	where $i$ is the inclusion (well-defined by (3)) and $\pi$ is the radial projection onto the $(d-1)$-dimensional sphere $S^{d-1}$. We verify that the composition $\pi \circ i$, and hence $i$, is a homeomorphism. In the next two steps, we verify that the cell structure of $P$ agrees with the facial structure of $\partial B_{\bar{x}}$. 
	\item We verify that each element of $\mathcal{F}$ (including the elements of $V$) is sent by the map $i$ to a face of $\partial B_{\bar{x}}$.
	\item We verify that the list of faces of $\partial B_{\bar{x}}$ is equal to $\mathcal{F}$.
\end{enumerate}

\textbf{Step 1: A basis for $W$}\\
By Theorem \ref{Thm:BasisForW}, there is a collection $w_1, \cdots, w_r$ of integral cocycles that forms a basis for the subspace $W$ of $H^1(X; \mathbb{R})$ consisting of classes with Thurston norm zero and that satisfies $\sum_i C_{\mathrm{dig}}(w_i) \leq O(|\mathcal{T}|^3)$. We assume that these simplicial cocycles are given to us non-deterministically. We can certify that the elements $w_1, \cdots, w_r$ have Thurston norm zero, using Theorem \ref{lackenby}. 

We express each $w_i$ as a linear combination of the given cocycles $\phi_1, \cdots, \phi_b$, as follows. There is a coboundary map $\partial^\ast \colon C^0(\mathcal{T}) \rightarrow C^1(\mathcal{T})$ from 0-cochains to 1-cochains. There is a natural basis $x_1, \cdots, x_m$ for $C^0(\mathcal{T})$ where $x_i$ is the 0-cochain that evaluates to $1$ on the $i$th vertex of $\mathcal{T}$ and evaluates to zero on the other vertices. We wish to solve
$$\alpha_1 \phi_1 + \cdots + \alpha_b \phi_b + \beta_1 \partial^\ast (x_1) + \cdots + \beta_m \partial^\ast (x_m) = w_i.$$
Using the Bareiss algorithm, this can be done in polynomial time as a function of $C_{\mathrm{dig}}(\Phi)$ and $|\mathcal{T}|$. The resulting coefficients $\alpha_1, \cdots, \alpha_b$ have $C_{\mathrm{dig}}(\alpha_i)$ at most a polynomial function of $C_{\mathrm{dig}}(\Phi)$ and $|\mathcal{T}|$.
We can also verify whether the cocycles $w_1, \cdots, w_r$ are linearly independent in $H^1(X; \mathbb{R})$.

In the remaining steps, we will certify that the induced Thurston semi-norm on $H^1(X; \mathbb{R})/W$ is indeed a norm, hence the basis elements actually generate $W$.\\

\textbf{Step 2A: Bounding the number of faces and vertices of the Thurston unit ball}\\
We are given the simplicial integral cocycles $\{ \phi_1 , \cdots, \phi_b \}$. From these, we can construct properly embedded oriented surfaces $S_1 , \cdots , S_b$ that are Poincar\'{e} dual to $\phi_1 , \cdots , \phi_b$, and whose total complexity, $\sum_i \chi_-(S_i)$, is at most $O(C_{\mathrm{una}}(\Phi))$. To see this geometrically, fix a 1-cocycle $\phi_i$ and consider an arbitrary simplicial triangle $\Delta$ in the triangulation. Assume that the numbers that $\phi_i$ associates to the edges of $\Delta$ are $a, b , c \geq 0$ such that $a = b+c$. We can draw $a= b+c$ normal arcs in $\Delta$ that intersect the edges of $\Delta$ in respectively $a, b $ and $c$ points. Given any tetrahedron, we can look at the drawn normal curves on its boundary triangles and place normal disks (triangles or squares) inside the tetrahedron with the given boundary curves. Construct an embedded surface $S_i$ by putting together the normal disks together glued along the common boundaries of the tetrahedra. The constructed surface is Poincar\'{e} dual to the starting 1-cocycle, and $\chi_-(S_i)$ is at most a linear multiple of $C_{\mathrm{una}}(\phi_i)$. 

By Theorem \ref{number of faces}, the total number of faces and vertices of the Thurston unit ball for $H^1(X ; \mathbb{R})/W$ are at most polynomial functions of $C_{\mathrm{una}}(\Phi)$. Note the degrees of these polynomials are bounded above by $B^2$, which is a fixed constant by our assumption. 

\begin{remark}
	Here the use of unary complexity $C_{\mathrm{una}}$ is crucial for our argument. 
\end{remark}


\textbf{Step 2B: Bounding the number of bits encoding the coefficients of the vertices of the Thurston unit ball}

\begin{lem}
There is a set of points $V \subset H^1(X ; \mathbb{Q})$ such that 
\[ \{ p(v) \hspace{2mm} | \hspace{2mm} v \in V \},  \]
is the set of vertices of the unit ball for the Thurston norm on $H^1(X , \mathbb{R})/W$ with the following properties:
\begin{enumerate}
\item $|V|$ is at most a polynomial function of $C_{\mathrm{una}}(\Phi)$;
\item each element of $V$ is $\gamma_1\phi_1 + \cdots + \gamma_b \phi_b$, for rational numbers $\gamma_1, \cdots, \gamma_b$ such that $\sum_i C_{\mathrm{dig}}(\gamma_i)$ is at most a polynomial function of $\log(C_{\mathrm{una}}(\Phi)))$.
\end{enumerate}
\label{vertex coefficients}
\end{lem}
\begin{proof}
Item (1) is proved in Step 2A. 

Define $\mathcal{A}$ as the set of integral points in $H^2(X , \partial X ; \mathbb{Z}) \otimes \mathbb{Q}$ with dual norm at most one. By the previous step, we can construct surfaces $S_1 , \cdots , S_b$ Poincar\'{e} dual to $\phi_1 , \cdots , \phi_b$ whose total complexity, $\sum \chi_-(S_i)$, is at most $O(C_{\mathrm{una}}(\Phi))$. By Theorem \ref{lattice points}, the size of $\mathcal{A}$ is at most a polynomial function of $O(C_{\mathrm{una}}(\Phi))$. Let $v \in H^1(X ; \mathbb{Q})$ be such that $p(v)$ is a vertex of the unit ball for  $H^1(X ; \mathbb{R})/W$. Then there are points $a_1 , \cdots , a_r \in \mathcal{A}$ such that the set of points $z \in H^1(X ; \mathbb{R})$ satisfying the equations 
\[  \langle a_1 , PD(z) \rangle = 1 , \]
\[ \vdots \]
\[ \langle a_r , PD(z) \rangle =1, \]
coincides with the affine space $v +W$. Here $PD(z)$ is the Poincar\'{e} dual to $z$, and $a_1 , \cdots, a_r$ can be chosen to be the set of vertices spanning the face of the dual unit ball that is dual to the vertex $p(v)$. Moreover, since $z \in H^1(X ; \mathbb{R})$ lies inside a $b$-dimensional space, at most $b$ of the above equations can be linearly independent; hence we may assume that $r \leq b$ by choosing a suitable subset of $\{ a_1 , \cdots, a_r \}$. Recall that the dual basis $\{ e^1 , \cdots , e^b \}$ for $H^2(X , \partial X ; \mathbb{Z}) \otimes \mathbb{Q}$ is defined as 
\[ \langle e^i , [S_j] \rangle = \delta_{ij}, \]
where $\delta_{ij}$ is the Kronecker function. From the proof of Theorem \ref{lattice points} we know that, if we write $a_i$ in the basis $\{ e^1 , \cdots , e^b \}$ then the coefficients are integral, and their absolute values are bounded above by $O(C_{\mathrm{una}}(\Phi))$. Hence for each $1 \leq i \leq  r$ we can write
\[ a_i = \sum_j \eta_j^i \hspace{1mm} e^j,\]
with $|\eta_j^i| \leq O(C_{\mathrm{una}}(\Phi))$. Since $ \{ \phi_1 , \cdots , \phi_b \}$ is a basis for $H^1(X ; \mathbb{R})$ we can write 
\[ z = \gamma_1 \phi_1 + \cdots + \gamma_b \phi_b  , \]
for real numbers $\gamma_j$. Now for $1 \leq i \leq r$ we have
\[ 1 = \langle a_i , PD(z) \rangle  = \langle  \sum_j \eta_j^i \hspace{1mm} e^j , \sum_s \gamma_s [S_s] \rangle = \eta_1^i \gamma_1 + \cdots + \eta_b^i \gamma_b.  \]
This gives a set of $r$ linear equations for $\gamma_1 , \cdots , \gamma_b$. The number of variables and the number of equations are bounded above by the constant $B$, and the total absolute values of the coefficients $\eta_j^i$ is bounded above by a polynomial function of $C_{\mathrm{una}}(\Phi)$. Therefore, there exists a rational solution, $z = \gamma_1 \phi_1 + \cdots + \gamma_b \phi_b$, where the total number of bits for $(\gamma_1 , \cdots , \gamma_b)$ is at most a polynomial function of $\log(C_{\mathrm{una}}(\Phi))$, for example by the Bareiss algorithm. 
\end{proof}


\textbf{Step 2C: The list of vertices, $V$, and faces $\mathcal{F}$}\\
By Lemma \ref{vertex coefficients}, there is a set of points $V \subset H^1(X ; \mathbb{Q})$ such that 
\[ \{ p(v) \hspace{2mm} | \hspace{2mm} v \in V \},  \]
is the set of vertices of the unit ball for the Thurston norm on $H^1(X , \mathbb{R})/W$, $|V|$ is at most a polynomial function of $C_{\mathrm{una}}(\Phi)$, and the total number of bits for writing each element of $V$ in terms of the basis $\{ \phi_1 , \cdots , \phi_b \}$ is at most a polynomial function of $\log(C_{\mathrm{una}}(\Phi))$. Likewise, the number of faces of the unit ball for $H^1(X ; \mathbb{R})/W$, that is $|\mathcal{F}|$, is bounded above by a polynomial function of $C_{\mathrm{una}}(\Phi)$. The sets $V$ and $\mathcal{F}$ are part of the certificate, and are given to us non-deterministically. We use Theorem \ref{lackenby} to certify that each element of $V$ has Thurston norm one.\\

\textbf{Step 3: Certifying that the vertices of each element of $\mathcal{F}$ actually lie on the same face of $\partial B_{\bar{x}}$}\\
Recall that the number of elements of $\mathcal{F}$ is at most polynomially large in $C_{\mathrm{una}}(\Phi)$. For any element $F \in \mathcal{F}$ with $F = \{ u_1 , \cdots , u_s \}  $, we check that 
\[ x(K_1u_1 + \cdots + K_s u_s) = K_1 \hspace{1mm}x(u_1)+ \cdots + K_s \hspace{1mm} x(u_s) = K_1 + \cdots + K_s, \] 
for some positive integral choices of $K_1 , \cdots, K_s$. Here $x$ represents the Thurston norm on $H^1(X ; \mathbb{R})$. It is clear that once proven, this implies that $\{u_1 , \cdots , u_s \}$ lie on the same face.

We would like to choose each $K_i$ such that the $1$-cocycle $K_i  u_i$ is integral. First we need to check that there is a choice of $K_i$ that is not too large. To see this, we can write $u_i$ in the integral basis $\phi_1 , \cdots, \phi_b$ as 
\[ u_i = \alpha_1^i \hspace{1mm} \phi_1 + \cdots + \alpha_b^i \hspace{1mm} \phi_b, \]
and take $K_i$ to be the product of denominators of $\alpha_j^i$ for $1 \leq j \leq b$. From Step 2 we know that the total number of digits of $K_i$ is bounded above by a polynomial function of $C_{\mathrm{una}}(\Phi)$. The numbers $K_i$ are part of the certificate, and are given to us non-deterministically. 

Therefore, we can use Theorem \ref{lackenby} to certify that the Thurston norm of
\[ K_1 \hspace{1mm} u_1 + \cdots + K_s \hspace{1mm} u_s, \]
is $K_1 + \cdots + K_s$. This finishes the certification for each element $F \in \mathcal{F}$. Since the total number of elements of $\mathcal{F}$ is bounded above by a polynomial function of $C_{\mathrm{una}}(\Phi)$, we are done.\\

\textbf{Step 4A: Decomposing each top-dimensional element in $\mathcal{F}$ into simplices.}\\
The dimension of an element $F$ of $\mathcal{F}$ is the maximum number $m$ such that $F$ has $m+1$ affinely independent vertices. Hence, we can compute the dimension of each such element from the list of its vertices, in time that is bounded above by a polynomial function of $C_{\mathrm{una}}(\Phi)$. The boundary of the polytope $\partial B_{\bar{x}}$  can be subdivided to a triangulation without adding any new vertices. This can easily be proved by induction on the dimension of the faces. In particular, each top-dimensional element in $\mathcal{F}$ can be subdivided in this way so that along incident subsets, their triangulations agree. Such a subdivision will be provided to us non-deterministically. Thus, for each top-dimensional element $F$ in $\mathcal{F}$, we are provided with a collection of subsets of $F$, each consisting of $d$ vertices, where $d$ is the dimension of $H^1(X ; \mathbb{R}) / W$. The number of such subsets is at most $|F|^d$, which is at most $|V|^b$. Let $\Sigma$ denote the collection of all these subsets, as we run over all top-dimensional elements of $\mathcal{F}$. Then the number of elements of $\Sigma$ is at most a polynomial function of $C_{\mathrm{una}}(\Phi)$. \\


\textbf{Step 4B: Certifying that the composition $\pi \circ i$ is injective.}\\
Let $\hat{\Sigma}$ be the set of faces of elements of $\Sigma$; in particular $\Sigma \subset \hat{\Sigma}$. Moreover, their cardinalities satisfy $|\hat{\Sigma}| \leq 2^d |\Sigma|$, since each element of $\Sigma$ has $d$ vertices. Recall that $P$ is the abstract CW-complex obtained by taking the geometric realisations of elements of $\mathcal{F}$ and gluing them together along their common boundaries. It is enough to verify that any pair of simplices $\sigma_1, \sigma_2 \in \hat{\Sigma}$ have disjoint interiors.
Here and afterwards, we slightly abuse the notation by denoting the image under $i$ of the geometric realisation of $\sigma_j$ by $\sigma_j$ again. The condition $\sigma_1^\circ \cap \sigma_2^\circ = \emptyset$ is equivalent to $\text{Cone}(\sigma_1^\circ) \cap \text{Cone}(\sigma_2^\circ) =\emptyset$, since the restriction of the Thurston norm to both $\sigma_1$ and $\sigma_2$ is equal to $1$. We now show how to verify the condition $\text{Cone}(\sigma_1^\circ) \cap \text{Cone}(\sigma_2^\circ) =\emptyset$ using Mixed Integer Programming. Assume that $\{ u_1 , \cdots , u_r \}$ forms the list of vertices of $\sigma_1$, and $\{ y_1 , \cdots , y_s \}$ forms the list of vertices of $\sigma_2$. We would like to check that 
\begin{eqnarray}
\alpha_1 u_1 + \cdots + \alpha_r u_r = \beta_1 y_1 + \cdots + \beta_s y_s
\label{disjoint-cones}
\end{eqnarray}  
has no solution for $\alpha_i > 0$ and $\beta_j > 0$. However, Equation (\ref{disjoint-cones}) has a solution with $\alpha_i >0$ and $\beta_j>0$ if and only if it has a solution with $\alpha_i\geq 1$ and $\beta_j \geq 1$, essentially by scaling. This is an instance of Mixed Integer Programming. Since no variables are required to be integers, it can be solved in polynomial time as a function of $\log(C_{\mathrm{una}}(\Phi))$ by Lemma \ref{vertex coefficients}.\\

\textbf{Step 4C: Certifying that the composition $\pi \circ i$ is a homeomorphism.}\\
The certificate provides the vertices and faces of the boundary of the unit norm ball, and it provides a collection $\Sigma$ of $(d-1)$-dimensional simplices. We have checked that the interiors of these simplices are disjoint. But we need to check that their union is the entire boundary of the unit norm ball. We check that $P$ is a closed, oriented, pseudo-manifold as follows. For this purpose, we need to check the conditions of \emph{purity}, \emph{non-branching}, \emph{connectivity} and \emph{orientability}.


\emph{Purity condition}: For each element $\sigma$ of $\Sigma$, we check that $p(\sigma)$ actually forms the vertices of a $(d-1)$-dimensional simplex. To do this, we verify that its vertices form a linearly independent set in $H^1(X ; \mathbb{R}) / W$. This can be done in polynomial time in $|\mathcal{T}|$ and $C_{\mathrm{una}}(\Phi)$.


\emph{Non-branching condition}: We check that every $(d-2)$-dimensional simplex appears in exactly two $(d-1)$-dimensional simplices. In other words, for each $(d-2)$-dimensional face of $\Sigma$, we check that it lies in exactly one other $(d-1)$-dimensional simplex in $\Sigma$. Since $|\Sigma|$ is bounded by a polynomial in $C_{\mathrm{una}}(\Phi)$, this can be checked in time that is polynomially bounded in $C_{\mathrm{una}}(\Phi)$.  

\emph{Connectivity condition}: For every pair of simplices $\sigma_1$ and $\sigma_2$ in $\Sigma$, we check that $\sigma_1$ and $\sigma_2$ can be connected by a path consisting of $(d-2)$- and $(d-1)$-dimensional simplices. We may assume that such a (minimal) path is given to us non-deterministically. Since $|\Sigma|$ is bounded by a polynomial in $C_{\mathrm{una}}(\Phi)$, this can be checked in time that is polynomially bounded in $C_{\mathrm{una}}(\Phi)$.

\emph{Orientability}: We specify an orientation of each simplex in $\Sigma$ by specifying an ordering of its vertices. We check that this orientation is compatible with its orientation from $H^1(X ; \mathbb{R}) / W$, by checking that the matrix with columns given by the elements of $\Sigma$ has positive determinant. For every two top-dimensional faces that share a $(d-2)$-dimensional face, we check that they are glued by an orientation-reversing map along their intersection.

We have now established that $P$ is a closed, oriented, pseudo-manifold. We check that the map $\pi \circ i$ is injective and surjective, and hence a homeomorphism. Injectivity was established in Step 4B. To prove the surjectivity, it is enough to show that the degree of the map $\pi \circ i$ is non-zero. Here we are using the fact that the degree is well-defined between compact, oriented pseudo-manifolds of the same dimension. Moreover, any such map that is not surjective has degree $0$, since $S^{d-1} - \{ \text{point} \}$ is contractible and the degree is invariant under homotopy. To check that the degree is non-zero in our case, note that the degree can be computed as the signed count of the points in $P$ that map to a generic but fixed element in $S^{d-1}$. Since all of the signs agree by our construction, this signed count is always non-zero. This finishes Step 4 of the certification. 

The maps $\pi \circ i$ and $\pi$ are both homeomorphisms, hence so is the map $i \colon P \rightarrow \partial B_{\bar{x}}$. Next we need to verify that the cell structure on $P$ agrees with the facial structure of $\partial B_{\bar{x}}$.\\

\textbf{Step 5: Certifying that each element of $\mathcal{F}$ is sent by the map $i$ to a face of $\partial B_{\bar{x}}$}\\
Note that each element of $V$ is an element of $\mathcal{F}$ as well. In particular, once proven, it implies that each element of $V$ is sent by the map $i$ to a vertex of $\partial B_{\bar{x}}$. We use the following equivalent definition of a face \cite[page 16]{schneider_2013}: Given a convex set $A \subset \mathbb{R}^d$, a convex subset $F$ of $A$ is a face if $y,z \in A$ and $(y+z)/2 \in F$ implies that $y, z \in F$. Note that $B_{\bar{x}}$ is the convex hull of $\partial B_{\bar{x}} = i(P)$, which is equal to the convex hull of $i(V)$ by the construction of $P$. We want to verify that $\mathrm{Conv}(i(F))$ is a face of $B_{\bar{x}}$, where $\mathrm{Conv}(\cdot)$ denotes the convex hull. By the above definition, $\mathrm{Conv}(i(F))$ is \emph{not} a face of $B_{\bar{x}}$ if and only if there are $y,z \in B_{\bar{x}}$ such that $(y+z)/2 \in \mathrm{Conv}(i(F))$ but $y \notin \mathrm{Conv}(i(F))$. Note that $y, z \in B_{\bar{x}}$ have norm at most one, and if $(y+z)/2$ has norm equal to one then necessarily both $y$ and $z$ have norm equal to one. Therefore, $\mathrm{Conv}(i(F))$ is not a face of $B_{\bar{x}}$, if and only if there are $y,z \in \partial B_{\bar{x}}$ such that $(y+z)/2 \in \mathrm{Conv}(i(F))$ but $y \notin \mathrm{Conv}(i(F))$. Now $\partial B_{\bar{x}}= i(P)$ and a CW-complex is the disjoint union of interiors of its cells. So $y \in \partial B_{\bar{x}} - \mathrm{Conv}(i(F))$ if and only if $y$ lies in the interior of $\mathrm{Conv}(i(F'))$ for some $F' \in \mathcal{F}$ such that $F' \nsubseteq F$. 

Let $V = \{v_1, \cdots, v_k \}$ and $F = \{ u_1, \cdots, u_r\}$. We have shown that $\mathrm{Conv}(i(F))$ is not a face of $B_{\bar{x}}$ if and only if there is an element $F' \nsubseteq F$ with $F' = \{ y_1, \cdots, y_s \}$ such that the following system of inequalities has a solution
\begin{align}
	& y = \alpha_1 y_1 + \cdots + \alpha_s y_s; \\
	& z = \beta_1 v_1 + \cdots + \beta_k v_k; \\
	&(y+z)/2 = \gamma_1 u_1 + \cdots + \gamma_r u_r;\\
	& \alpha_i>0 ;  \\
	&\beta_j, \gamma_k \geq 0; \\
	& \sum \alpha_i = \sum \beta_j = \sum \gamma_k = 1.	
\end{align}

By scaling $y$ and $z$ by the same positive factor, we see that the above system of inequalities has a solution if and only if the following system has a solution 
\begin{align*}
	&(5)-(7); \\
	& \alpha_i \geq 1; \\
	& \beta_j, \gamma_k \geq 0; \\
	&  \sum \alpha_i = \sum \beta_j = \sum \gamma_k.
\end{align*}

This is an instance of Mixed Integer Programming (with no integer variables). For any given elements $F$ and $F' \not \subseteq F$ of $\mathcal{F}$, we can verify that the above system of inequalities has no solution, in time that is a polynomial function of $|\mathcal{T}|$ and $C_{\mathrm{una}}(\Phi)$. Since there are polynomially many choices for $F$ and $F'$, we can verify in polynomial time that $\mathrm{Conv}(i(F))$ is a face of $\partial B_{\bar{x}}$ for each $F \in \mathcal{F}$.  

\textbf{Step 6: Certifying that the list of faces of $\partial B_{\bar{x}}$ is complete.}\\
The list of top-dimensional faces used to construct the space $P$ is the complete list of top-dimensional faces of $\partial B_{\bar{x}}$, otherwise the inclusion map $i$ would not have been surjective.

For every face $F \in \mathcal{F}$ there are top-dimensional faces $F_1, \cdots, F_r \in F$ with $r \leq d$ such that 
\[ F = \bigcap_{i=1}^{r} F_i, \]
where we have considered faces as subsets of the vertices $V$. Moreover, any intersection as above determines a face. Hence, we may go over all subsets of size at most $d$ of the set of top-dimensional faces, and verify that our list of faces is complete.
\end{proof}

\begin{remark}
Note that in defining the complexity of the basis for cohomology, we used $C_{\mathrm{una}}$ rather $C_{\mathrm{dig}}$. Although this was enough for the current application, it would be interesting to know if \textsc{Thurston norm ball for 3-manifolds with $b_1 \leq B$} still lies in \textbf{FNP} if we change the definition of the complexity of the cohomology basis to $C_{\mathrm{dig}}$. \end{remark}

\section{Decomposing a triangulated manifold along spheres and discs}
\label{Sec:SpheresDiscs}


In the previous section, we proved Theorem \ref{Thm:MainThmIrred}. This established Theorem \ref{thurston ball} for irreducible and boundary-irreducible 3-manifolds. In this section, we start to tackle the general case, by decomposing our given 3-manifold along spheres and discs.

The following is Theorem 11.4 and Addendum 11.5 of \cite{lackenby2016efficient}. It provides a method for building a triangulation of the connected summands of a 3-manifold $X$. The input is a triangulation $\mathcal{T}$ of $X$, together with normal spheres $S$ that specify the connected sum. The running time of the algorithm is bounded above in terms of the weight $w(S)$. Recall that this is the number of intersection points between $S$ and the 1-skeleton of $\mathcal{T}$. 

\begin{notation}
	For a metric space $X$ and $A \subset X$, denote the metric completion of $X - A$ with the induced path metric by $X \setminus \setminus A$. 
\end{notation}

\begin{thm}
\label{Thm:DecomposeAlongSpheres}
There is an algorithm that takes, as its input, the following data:
\begin{enumerate}
\item a triangulation $\mathcal{T}$ with $t$ tetrahedra of a compact orientable 3-manifold $X$;
\item a vector $(S)$ for a normal surface $S$ in $\mathcal{T}$ that is a union of disjoint spheres;
\item a simplicial $1$-cocycle $c$ on $\mathcal{T}$. 
\end{enumerate}
The output is a triangulation $\mathcal{T}'$ of $X \cut S$ and a simplicial 1-cocycle $c'$ on $\mathcal{T}'$ with the following properties:
\begin{enumerate}
\item the number of tetrahedra in $\mathcal{T}'$ is at most $200t$;
\item the classes $[c']$ and $i^\ast([c])$ in $H^1(X \cut S)$ are equal, where $i \colon X \cut S \rightarrow X$ is the inclusion map;
\item $C_{\mathrm{una}}(c') \leq 1200 t \, C_{\mathrm{una}}(c).$
\end{enumerate}
The algorithm runs in time that is bounded above by a polynomial function of 
$$t (\log w(S)) (\log (C_{\mathrm{una}}(c) + 1)).$$
\end{thm}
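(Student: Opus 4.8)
The plan is to reduce to the case where $S$ is a single normal sphere and then to iterate, controlling the growth of the cocycle at each stage. First I would recall the structure of a single normal sphere $S_0$ inside a tetrahedron: it meets each tetrahedron in triangles and squares, and $X \cut S_0$ is obtained by cutting along $S_0$ and capping the two resulting boundary spheres with balls. The key local picture is that a neighbourhood $N(S_0)$ is a product $S_0 \times I$, triangulated by normal pieces; replacing $N(S_0)$ with two cones on the boundary spheres is a purely local move. I would build $\mathcal{T}'$ by first subdividing $\mathcal{T}$ so that $S_0$ becomes a normal surface at weight at most a constant times the original, then splitting along it, then re-triangulating the two new sphere boundary components as cones. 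The standard way to do this with a \emph{linear} blow-up in the number of tetrahedra --- rather than one proportional to $w(S_0)$ --- is to use the fact that a normal sphere of possibly enormous weight still has a bounded number of \emph{types} of elementary pieces, so one can cut along it by a combinatorial operation on the $7t$-coordinate vector and then re-triangulate each tetrahedron of $\mathcal{T}$ by a bounded number of tetrahedra, independent of the entries of $(S_0)$. This gives the bound $|\mathcal{T}'| \le 200t$ for one sphere; iterating over the (at most $t$, say) components of $S$ would naively multiply the constant, so instead I would cut along all of $S$ simultaneously, treating its components in parallel within each tetrahedron, which keeps the blow-up at a single factor of $200$.

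Next I would track the $1$-cocycle. Given $c$ on $\mathcal{T}$, the subdivision and cutting moves are local, so each new edge of $\mathcal{T}'$ either coincides with (a subdivided piece of) an old edge or is a new edge in the interior of an old tetrahedron. For the subdivided pieces, $c'$ on a chain of new edges summing to an old edge $e$ can be taken to put the whole value $\langle c, e\rangle$ on one sub-edge and $0$ on the rest; for the genuinely new interior edges I would push forward $c$ using the fact that on a ball (a single tetrahedron) every $1$-cocycle is a coboundary, so I can write $c|_{\partial(\text{tet})}$ as $\delta^\ast f$ for a $0$-cochain $f$ on the boundary vertices with $C_{\mathrm{nat}}(f)$ at most a constant times $C_{\mathrm{nat}}(c|_{\text{tet}})$, extend $f$ arbitrarily over the new interior vertices, and set $c' = \delta^\ast (\text{extension})$ there. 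The caps are simply cones on spheres, where $H^1$ vanishes, so $c'$ extends (as a coboundary) over them with no essential choice. Because each old tetrahedron is replaced by a bounded number of new tetrahedra and edges, and the values of $c'$ on the new edges are bounded by a fixed multiple of the values of $c$ on the $6$ edges of that tetrahedron, summing over all $t$ tetrahedra gives $C_{\mathrm{nat}}(c') \le 1200 t \, C_{\mathrm{nat}}(c)$. That $[c'] = i^\ast[c]$ in $H^1(X \cut S)$ follows because on $X \cut S$ minus the caps, $c'$ and $c$ agree up to a coboundary by the local construction, and over the caps both are coboundaries.

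For the running time, the only potentially expensive ingredient is that the coordinates of $(S)$ can be astronomically large --- up to $t 2^{7t+2}$ for a fundamental surface, i.e. with $O(t)$ binary digits, so $\log w(S)$ digits in all. But the cutting operation on the coordinate vector and the construction of $\mathcal{T}'$ only involve a bounded number of additions, comparisons and bookkeeping operations \emph{per tetrahedron}, each on integers of bit-length $O(\log w(S))$, so this part runs in time polynomial in $t \log w(S)$. Likewise, solving the local coboundary equations for $c'$ is a fixed-size linear-algebra problem over integers of bit-length $O(\log(C_{\mathrm{nat}}(c)+1))$ per tetrahedron, so it contributes a factor polynomial in $t \log(C_{\mathrm{nat}}(c)+1)$. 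Multiplying (really, adding) these gives the claimed bound, polynomial in $t(\log w(S))(\log(C_{\mathrm{nat}}(c)+1))$.

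I expect the main obstacle to be the linear bound $|\mathcal{T}'| \le 200t$ combined with the logarithmic time bound: one must cut along a normal surface of exponential weight without ever writing it down explicitly, producing a triangulation whose \emph{size} is independent of that weight, and whose \emph{construction} costs only the number of digits of the weight. This is precisely the content imported from \cite{lackenby2016efficient} (Theorem 11.4 and Addendum 11.5), and I would quote it rather than reprove it; the new work here is merely to check that the $1$-cocycle can be carried along with the stated linear control on $C_{\mathrm{nat}}$, which is the routine local-coboundary argument sketched above.
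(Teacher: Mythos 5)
Your bottom line is the same as the paper's: Theorem \ref{Thm:DecomposeAlongSpheres} is not proved in this paper at all, but quoted verbatim as Theorem 11.4 and Addendum 11.5 of \cite{lackenby2016efficient} (only the extension to spheres \emph{and discs}, Theorem \ref{Thm:SpheresAndDiscs}, gets a sketched argument here). So your closing decision to ``quote it rather than reprove it'' is exactly what the authors do. However, the proof sketch you interpose before that decision has two genuine gaps, and it is worth being clear that it would not survive as a standalone argument.

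First, the bound $|\mathcal{T}'| \leq 200t$ does not follow from ``a bounded number of types of elementary pieces, so re-triangulate each tetrahedron by a bounded number of tetrahedra.'' Within a single tetrahedron, $X \cut S$ contains possibly exponentially many parallel product blocks between adjacent normal discs, and these cannot be collapsed tetrahedron-by-tetrahedron: they must be assembled globally into the parallelity bundle $\mathcal{B}$, an $I$-bundle over a surface, and one must control the topology of its base. This is where the hypothesis that $S$ is a union of spheres enters essentially --- it forces the horizontal boundary of $\mathcal{B}$ to be planar, so each component of $\mathcal{B}$ is determined by the number of boundary components of its base and whether it is twisted, and only then can $\mathcal{B}$ be re-triangulated with $O(t)$ tetrahedra. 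Your sketch never confronts this, and it is the entire content of the theorem. Second, your cocycle transfer assumes $\mathcal{T}'$ is locally a subdivision of $\mathcal{T}$, so that each new edge sits inside an old tetrahedron and the value of $c'$ can be found by solving a local coboundary equation there. That is false: an edge of the re-triangulated parallelity bundle traverses a region corresponding to an enormous number of normal pieces of $\mathcal{T}$, so its value cannot be read off from any single old tetrahedron. The actual construction dualises $c$ to a surface $F$, arranges $F \cap \mathcal{B}$ to be vertical, replaces $F$ by a controlled vertical surface $F'$ agreeing with $F$ outside $\mathcal{B}$ and on $\partial_v \mathcal{B}$, and defines $c'$ by intersection numbers with $F'$; the identity $[c'] = i^\ast([c])$ then uses, once more, that $S$ consists of spheres (so the vertical surface by which $F'$ and $F$ differ is dual to the trivial class). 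Your time-complexity discussion, by contrast, is essentially right: arithmetic on integers with $O(\log w(S))$ and $O(\log(C_{\mathrm{nat}}(c)+1))$ bits, performed a bounded number of times per handle.
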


We need a small extension of this result.

\begin{thm}
\label{Thm:SpheresAndDiscs}
Theorem \ref{Thm:DecomposeAlongSpheres} remains true if $S$ is a union of disjoint spheres and discs.
\end{thm}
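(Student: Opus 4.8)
The plan is to reduce to the case already handled by Theorem~\ref{Thm:DecomposeAlongSpheres}. Write $S = S_{\mathrm{sph}} \cup S_{\mathrm{disc}}$, where $S_{\mathrm{sph}}$ is the union of the sphere components of $S$ and $S_{\mathrm{disc}}$ the union of the disc components; the sphere components require nothing new, so all the content lies in dealing with the discs. The underlying topological observation is that cutting $X$ along a properly embedded disc $D$, with $\partial D\subset\partial X$, behaves combinatorially just like cutting along a normal sphere: after cutting, the two new boundary copies $D_+,D_-$ of $D$ play the role of the two new boundary spheres in the sphere case, and ``collapsing $D_\pm$ to a point'' --- which is a homeomorphism, since collapsing a disc in the boundary of a $3$-manifold to a point does not change its homeomorphism type --- is the exact analogue of ``filling the new boundary spheres with balls''. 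One can also see this by attaching a $2$-handle to $X$ along a neighbourhood of $\partial D$ in $\partial X$ and capping $D$ off with the handle's core disc to obtain a sphere; cutting the enlarged manifold along that sphere gives $X\cut D$ with two collapsible half-handles attached along discs in its boundary.

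Concretely, I would prove the statement by revisiting the proof of Theorem~\ref{Thm:DecomposeAlongSpheres} from \cite{lackenby2016efficient} and checking that it applies verbatim with disc components present. That proof realises the cut by bounded local modifications --- this is what allows the output to have at most $200t$ tetrahedra and keeps the running time polynomial in $t\,(\log w(S))(\log(C_{\mathrm{nat}}(c)+1))$ rather than polynomial in $w(S)$ --- together with a weight-independent crushing/collapsing step that disposes of the trivial product and ball pieces. For any tetrahedron in the interior of $X$, a disc component of $S$ meets it in exactly the same normal triangles and squares that a sphere component could, so the local move, the size accounting, and the effect on the cocycle are all literally unchanged. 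The only genuinely new local picture occurs in a tetrahedron $\Delta$ that meets $\partial X$ and contains a piece of $\partial D$: there a disc component meets $\Delta$ in normal discs one edge of which is a normal arc of $\partial D$ lying in a boundary face of $\Delta$. Cutting along these pieces and then performing the boundary collapse described above is again a bounded, purely combinatorial operation; one checks that it yields an honest triangulation of $X\cut S$ (the collapse of a boundary disc pattern to a vertex introduces no non-manifold points) and that it changes the simplicial cocycle only on a collapsed contractible subcomplex, so the class $[c']$ still equals $i^\ast([c])$ in $H^1(X\cut S)$.

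Granting these local verifications, the global conclusions --- at most $200t$ tetrahedra (after possibly enlarging the constant to absorb the boundary moves), $C_{\mathrm{nat}}(c')\le 1200\,t\,C_{\mathrm{nat}}(c)$, the equality of $[c']$ and $i^\ast([c])$, and the stated running time --- follow by the same global accounting as in the sphere case, since at no stage do we build a triangulation whose size is comparable to $w(S)$.

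The main obstacle is precisely this last point, and it is why the tempting black-box route fails: attaching an actual $2$-handle along $\partial D$ and quoting Theorem~\ref{Thm:DecomposeAlongSpheres} would require triangulating a $2$-handle glued along a normal curve of weight $w$, which costs $\Theta(w)$ tetrahedra, and $w(S)$ may be exponential in $t$. So one really has to go inside the proof of Theorem~\ref{Thm:DecomposeAlongSpheres} and confirm that its weight-independent collapsing step still behaves correctly when the collapsed pieces are boundary discs rather than spheres --- in particular that no boundary component is inadvertently compressed, and that the book-keeping responsible for the $\log w(S)$ (as opposed to $w(S)$) dependence in the running time is unaffected by the extra boundary arcs. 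Once the local moves near $\partial X$ are arranged so that $D_+$ and $D_-$ are handled exactly as the new boundary spheres of the sphere case, this is routine.
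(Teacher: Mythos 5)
Your overall strategy---going inside the proof of Theorem \ref{Thm:DecomposeAlongSpheres} rather than black-boxing it, and expecting the new issues to be concentrated near $\partial X$---is the same as the paper's. But your proposal defers exactly the verifications that carry the content of the theorem, and your description of the internal mechanism of Theorem \ref{Thm:DecomposeAlongSpheres} is not accurate enough for those verifications to be ``routine''. That proof is not a tetrahedron-by-tetrahedron local modification followed by a crushing step. The exponential weight of $S$ is absorbed by the \emph{parallelity bundle} $\mathcal{B}$ of $(X \cut S, S')$: the $I$-bundle formed by the handles trapped between normally parallel sheets of $S$ in the induced handle structure. The part $(X \cut S)\cut\mathcal{B}$ has only $O(t)$ handles and is easy to triangulate; the step that genuinely needs an argument is that $\mathcal{B}$ itself, whose weight may be comparable to $w(S)$, can be re-triangulated with $O(t)$ tetrahedra. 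This works because the horizontal boundary of $\mathcal{B}$ is a subsurface of copies of $S$, hence \emph{planar} when $S$ is a union of spheres and discs, so each component of $\mathcal{B}$ is determined up to homeomorphism by whether it is a product or twisted $I$-bundle and by the number of boundary components of its base surface, and the total number of such boundary components is $O(t)$. This planarity is precisely where the hypothesis ``spheres and discs'' enters, and it is absent from your argument; without it, ``the weight-independent collapsing step still behaves correctly'' is an assertion, not a proof.

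Two further specific points are missing. First, when $S$ has boundary, a component $A$ of $\partial_v\mathcal{B}$ need no longer lie entirely in $(X\cut S)\cut\mathcal{B}$: between two normally parallel discs of $S$ incident to $\partial X$, the parallelity handle $D^2\times I$ has $\partial D^2\times I$ meeting $\partial X$, so the ``location'' data produced by the algorithm of Theorem 9.3 of \cite{lackenby2016efficient} must be reinterpreted as a cyclic sequence of squares only some of which lie in $(X\cut S)\cut\mathcal{B}$. This is the one place where the sphere algorithm's output actually has to be extended, and your ``new local picture in a boundary tetrahedron'' does not capture it. Second, your justification that $[c']=i^\ast([c])$ (``changes the cocycle only on a collapsed contractible subcomplex'') relies on a ``collapse $D_\pm$ to a point'' device that does not occur: the output is a triangulation of $X\cut S$ itself, with both disc copies intact in its boundary. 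The correct argument dualises $c$ to a surface $F$, replaces it by a surface $F'$ agreeing with $F$ outside $\mathcal{B}$, vertical in $\mathcal{B}$, and with the same trace on $\partial_v\mathcal{B}$; the difference is represented by a vertical surface in $\mathcal{B}$ disjoint from $\partial_v\mathcal{B}$, and such a surface is null-cohomologous in $X\cut S$ exactly because $S$ consists of spheres and discs.
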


The proof is essentially identical, and we therefore only sketch it. 

When $S$ is a normal surface properly embedded in a compact orientable 3-manifold $X$ with a triangulation $\mathcal{T}$, then $X \cut S$ inherits a handle structure, as follows. One first dualises $\mathcal{T}$ to form a handle structure $\mathcal{H}$ for $X$. The normal surface $S$ then determines a surface that is standard in $\mathcal{H}$, which means that it is disjoint from the 3-handles, it intersects each handle in discs and, in the case of a 1-handle or 2-handle, these discs respect the handle's product structure. Then, by cutting along this surface, each $i$-handle of $\mathcal{H}$ is decomposed into $i$-handles in the required handle structure. We call this the \emph{induced} handle structure on $X \cut S$.

We do not actually construct this handle structure in the proof of Theorem \ref{Thm:DecomposeAlongSpheres}. The reason is that the number of handles (when $S$ is closed) is at least $w(S)$. So it is not possible to build this handle structure in time that is bounded above by a polynomial function of $\log w(S)$. 
In the next definition, it is useful to think about $\mathcal{H}'$ as the induced handle structure on $X' = X \cut S$ and where $S'$ is the copies of $S$ in $\partial X'$.

\begin{definition}

Let $\mathcal{H}'$ be a handle structure for a compact 3-manifold $X'$. Let $S'$ be a compact subsurface of $\partial X'$ such that $\partial S'$ is disjoint from the 2-handles and respects the product structure on the 1-handles. A handle $H$ of $\mathcal{H}'$ is a \emph{parallelity handle} for $(X',S')$ if
it admits a product structure $D^2 \times I$ such that
\begin{enumerate}
\item $D^2 \times \partial I = H \cap S'$;
\item each component of intersection between $H$ and another handle of $\mathcal{H}'$ is $\beta \times I$, for an arc $\beta$ in $\partial D^2$.
\end{enumerate}
The union of the parallelity handles is the \emph{parallelity bundle}.
\end{definition}

We will typically view the product structure $D^2 \times I$ on a parallelity handle as an $I$-bundle over $D^2$. It is shown in Lemma 5.3 of \cite{lackenby2009crossing} that these $I$-bundle structures patch together to form an $I$-bundle structure on the parallelity bundle.

\begin{definition}
\label{Def:BundleDefinitions}
Let $\mathcal{B}$ be an $I$-bundle over a compact surface $F$. Its \emph{horizontal boundary} $\partial_h \mathcal{B}$ is the $(\partial I)$-bundle over $F$. Its \emph{vertical boundary} $\partial_v \mathcal{B}$ is the $I$-bundle over $\partial F$. We say that a subset of $\mathcal{B}$ is \emph{vertical} if it is a union of fibres, and that it is \emph{horizontal} if it is a surface transverse to the fibres.
\end{definition}

The main step in the proof of Theorem 11.4 in \cite{lackenby2016efficient} was an application of the following result (Theorem 9.3 in \cite{lackenby2016efficient}).

\begin{thm}
There is an algorithm that takes, as its input, 
\begin{enumerate}
\item a triangulation $\mathcal{T}$, with $t$ tetrahedra, for a compact orientable 3-manifold $X$;
\item a vector $(S)$ for an orientable normal surface $S$;
\end{enumerate}
and provides as its output, the following data. If $S'$ is the two copies of $S$ in $\partial (X \cut S)$,
and $\mathcal{B}$ is the parallelity bundle for the pair $(X \cut S,S')$ with its induced handle structure, then
the algorithm produces a handle structure for $(X \cut S) \cut \mathcal{B}$ and, for each
component $B$ of $\mathcal{B}$, it determines:
\begin{enumerate}
\item the genus and number of boundary components of its base surface;
\item whether $B$ is a product or twisted $I$-bundle; and
\item for each component $A$ of $\partial_vB$, the location of $A$ in $(X \cut S) \cut \mathcal{B}$.
\end{enumerate}
It runs in time that is bounded  by a polynomial in $t \log(w(S))$.
\end{thm}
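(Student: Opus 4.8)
The plan is to reconstruct the argument of \cite{lackenby2016efficient} (this is Theorem 9.3 there). Its essential point is that, although the induced handle structure on $X \cut S$ may have size comparable to $w(S)$, both the parallelity bundle $\mathcal{B}$ and its complement admit a faithful combinatorial description of size $O(t)$ that can be extracted directly from the normal vector $(S)$ in time polynomial in $t \log w(S)$; the algorithm must never expand the $w(S)$-sized handle structure.

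First I would set up the handles. Dualise $\mathcal{T}$ to a handle structure $\mathcal{H}$ of $X$ with $O(t)$ handles, in which $S$ becomes standard: it misses the $3$-handles and meets each $i$-handle with $i \le 2$ in discs respecting the product structure. Inside a $0$-handle dual to a tetrahedron $\Delta$, the discs of $S$ fall into the seven normal isotopy types; discs of the same type are mutually parallel and occur in a stack whose size is an explicit non-negative integer-linear function of the coordinates of $(S)$, and similar stacks of parallel discs appear in the $1$- and $2$-handles. Cutting along $S$ gives the induced handle structure $\mathcal{H}'$ on $X' = X \cut S$, whose handles are indexed by a handle of $\mathcal{H}$ together with a complementary region of the stacks; there may be $\Theta(w(S))$ of them. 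A handle of $\mathcal{H}'$ squeezed between two consecutive parallel discs inside a single handle of $\mathcal{H}$, meeting each adjacent handle in the product-respecting way, is a parallelity handle; the others are the finitely many ``exceptional'' regions (adjacent to a corner of a disc pattern, to a $3$-handle, or to a handle already containing an exceptional region), and a counting argument bounds their number by $O(t)$. Hence $X' \cut \mathcal{B}$ has a handle structure with $O(t)$ handles, which together with its gluing data fits in $O(t \log w(S))$ bits; I would invoke Lemma 5.3 of \cite{lackenby2009crossing} to know that the individual product structures $D^2 \times I$ patch into an honest $I$-bundle structure on each component $B$ of $\mathcal{B}$ over a base surface $F_B$.

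Then I would extract the required data componentwise. The horizontal boundary $\partial_h B$ is a subsurface of $S$ assembled from stacks of normal discs glued along normal arcs, so $\chi(\partial_h B)$ is an explicit integer-linear function of $(S)$, computable in polynomial time; since $\partial_h B \to F_B$ is a double cover, $\chi(F_B) = \tfrac12\chi(\partial_h B)$. The number of boundary circles of $F_B$ equals the number of components of $\partial_v B$ (an $I$-bundle over a circle being connected), and these are recorded in the $O(t)$-handle structure of $X' \cut \mathcal{B}$, so they can be counted; solving $2 - 2g(F_B) - (\#\partial F_B) = \chi(F_B)$ then yields the genus and boundary count. Whether $B$ is a product or a twisted $I$-bundle is decided by whether $\partial_h B$ is disconnected or connected, which is a $2$-colouring (bipartiteness) question on the adjacency graph of the parallelity handles inside $B$: one tracks which of the two horizontal sides one is on while moving through successive parallelity handles, and a monodromy flipping the side forces $\partial_h B$ connected hence $B$ twisted. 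Finally, for each component $A$ of $\partial_v B$ (an annulus or M\"obius band), its location in $X' \cut \mathcal{B}$ is just the list of handles of the $O(t)$-handle structure that it meets together with the attaching maps, which is already part of the combinatorial output above. For the running time, every step acts on the $O(t)$ handles of $\mathcal{H}$ and of $X' \cut \mathcal{B}$ or on the vector $(S)$, whose total size is $O(t\log w(S))$; grouping parallel discs into $O(1)$ stacks per handle, evaluating the linear functionals for Euler characteristics, and running the graph searches for connectivity and $2$-colouring are all polynomial in $t\log w(S)$.

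The main obstacle — the part that should not be treated as routine — is establishing that $\mathcal{B}$ and its complement genuinely possess this $O(t)$-sized faithful description independent of $w(S)$: that the blocks of consecutive parallel handles inside each handle of $\mathcal{H}$ are uniformly product regions, that their gluings assemble into $I$-bundles over base surfaces of complexity $O(t)$, and that all of this can be computed purely from $(S)$ without ever materialising the $w(S)$-sized handle structure. This is precisely the handle-by-handle parallelity analysis of \cite{lackenby2009crossing} (Lemma 5.3 in particular) carried over to the present situation, and its succinct, expansion-free execution is exactly what makes the logarithmic dependence on $w(S)$ possible.
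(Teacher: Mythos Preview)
The paper does not prove this theorem; it is quoted verbatim as Theorem 9.3 of \cite{lackenby2016efficient} and used as a black box in the proof of Theorem \ref{Thm:SpheresAndDiscs}. Your proposal correctly identifies this and offers a faithful sketch of the argument from that reference: the non-parallelity handles of $X \cut S$ number $O(t)$ regardless of $w(S)$, the parallelity handles assemble into an $I$-bundle via Lemma 5.3 of \cite{lackenby2009crossing}, and the invariants of each component (Euler characteristic, boundary count, product versus twisted) are read off from integer-linear functions of $(S)$ and graph searches on the $O(t)$-sized combinatorial data without ever materialising the $w(S)$-sized structure. There is nothing in the present paper to compare against beyond the citation itself.
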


In the above, the meaning of the \emph{location} of $A$ is as follows. The intersection between $A$ and each handle of $(X \cut S) \cut \mathcal{B}$ is a union of fibres in the $I$-bundle structure on $A$, and hence is a copy of $I \times I$. In the case when $A$ lies entirely in $(X \cut S) \cut \mathcal{B}$, then $A$ is a union of these copies of $I \times I$, and in this case, the algorithm provides these copies of $I \times I$ in the order they appear as one travels around $A$. However, $A$ need not lie entirely in $(X \cut S) \cut \mathcal{B}$. This arises in the situation where $S$ has boundary. For example, if $D$ and $D'$ are normally parallel discs of $S$ that are incident to the boundary of $X$, then the space between them becomes a parallelity handle $D^2 \times I$ such that $\partial D^2 \times I$ intersects $\partial X$. Thus, in this situation, $A$ is decomposed into a union of copies of $I \times I$, which are the components of intersection between $A$ and the handles of $(X \cut S) \cut \mathcal{B}$ and also components of intersection between $A$ and $\partial X$. The algorithm provides the copies of $I \times I$ lying in $(X \cut S) \cut \mathcal{B}$ in the order they appear as one travels around $A$.

Thus, the triangulation $\mathcal{T}'$ is constructed by decomposing each of the handles of $(X \cut S) \cut \mathcal{B}$ into tetrahedra and by giving a compatible triangulation of $\mathcal{B}$. The number of handles of $(X \cut S) \cut \mathcal{B}$ is bounded above by a linear function of $t$ and each of these handles can intersect its neighbours in a very limited number of possibilities. Thus, it is not hard to triangulate $(X \cut S) \cut \mathcal{B}$ using at most $100t$ tetrahedra. In addition, we may ensure that the intersection with $\partial_v \mathcal{B}$ is simplicial. The horizontal boundary of each component $B$ of $\mathcal{B}$ is a planar surface, since $S$ is a union of spheres and discs. Thus, the topology of $B$ is determined entirely by the number of boundary components of its base surface and whether it is a twisted $I$-bundle or a product. It is shown that the total number of boundary components of the base surface of $\mathcal{B}$ is at most $10t$. Hence, it is not hard to construct the triangulation on $\mathcal{B}$ with at most $100t$ tetrahedra.

We now explain briefly how the cocycle $c'$ is constructed. This is explained in Addendum 11.5 in \cite{lackenby2016efficient}. 

For each oriented edge $e$ in $\mathcal{T}'$, we need to define $c'(e)$. It is convenient to dualise $c$ to form an oriented surface $F$ properly embedded in $X$. We may assume that $F$ is transverse to $S$ and that the intersection between $F$ and $\mathcal{B}$ is vertical in $\mathcal{B}$. If $e$ lies in $(X \cut S)  \cut \mathcal{B}$, then we define $c'(e)$ to be the algebraic intersection number between $e$ and $F \cut S$. This therefore defines the restriction of $c'$ to $\partial_v \mathcal{B}$. In the proof of Addendum 11.5 in \cite{lackenby2016efficient}, we replace $F$ by any compact oriented surface $F'$ that equals $F$ in $(X \cut S)  \cut \mathcal{B}$, that is vertical in $\mathcal{B}$ and that satisfies $\partial_v \mathcal{B} \cap F' = \partial_v \mathcal{B} \cap F$. It is shown how to do this while maintaining control over the number of intersections with the edges of $\mathcal{T}'$. In particular, the cocycle $c'$ dual to $F'$ satisfies $C_{\mathrm{una}}(c') \leq 1200 t \, C_{\mathrm{una}}(c)$. Now, $F'$ and $F$ differ by a class that is represented by a vertical surface in $\mathcal{B}$ disjoint from $\partial_v \mathcal{B}$. In our situation, any such surface is dual to the trivial class in $H^1(X \cut S)$, since $S$ is a union of spheres and discs. Thus, in fact, $[c']$ and $i^\ast([c])$ are equal. 

This completes the outline of the proof of Theorem \ref{Thm:SpheresAndDiscs}. We will first apply it to essential spheres in $X$ with the following property.

\begin{definition}
A collection of disjoint essential spheres $S$ properly embedded in a 3-manifold $X$ is \emph{complete} if the manifold obtained from $X \cut S$ by attaching a 3-ball to each spherical boundary component is irreducible.
\end{definition}



The following was proved by King (Lemma 4 in \cite{King}). King's result is stated for closed orientable 3-manifolds, but his argument extends immediately to compact orientable 3-manifolds with boundary. (See also Lemma 2.6 in \cite{mijatovic}).

\begin{thm}
Let $\mathcal{T}$ be a triangulation of a compact orientable 3-manifold $X$ with $t$ tetrahedra. Then there is a complete collection of disjoint essential normal spheres in $\mathcal{T}$ with weight at most $2^{185t^2}$.
\label{king}
\end{thm}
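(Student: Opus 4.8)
The plan is to construct the spheres one at a time, using normal surface theory to bound the weight of each and Kneser--Haken finiteness to control how many are needed; this is in essence King's argument \cite{King}. If $X$ is already irreducible the empty collection works, so assume otherwise. Inductively suppose we have disjoint, pairwise non-parallel, essential \emph{normal} spheres $S_1, \dots, S_j$ in $\mathcal{T}$, and let $X_j$ be the manifold obtained from $X \cut (S_1 \cup \dots \cup S_j)$ by capping each new spherical boundary component with a ball. Since $S_1 \cup \dots \cup S_j$ is a single (disconnected) normal surface in $\mathcal{T}$, an application of Theorem~\ref{Thm:DecomposeAlongSpheres} (with Theorem~\ref{Thm:SpheresAndDiscs} available should discs arise) produces a triangulation of $X_j$ with at most $200\, t$ tetrahedra --- crucially, this bound does not degrade as $j$ grows. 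If $X_j$ is irreducible we stop. Otherwise $X_j$ contains an essential $2$--sphere, which we take to be a \emph{vertex} normal surface of this triangulation; by the Hass--Lagarias estimates \cite{HassLagarias} its weight in that triangulation is at most singly exponential in $t$. After an isotopy it misses the capping balls, hence determines a $2$--sphere $\Sigma$ in $X$ disjoint from $S_1, \dots, S_j$. One checks that $\Sigma$ is essential in $X$ (a ball bounded by $\Sigma$ in $X$ must miss the essential spheres $S_i$, hence would embed in $X_j$), and that $S_1, \dots, S_j, \Sigma$ bring us strictly closer to irreducibility (the total complexity of the pieces, measured by, say, the number of non-trivial prime summands together with the second Betti number, drops). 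Normalising $\Sigma$ in $\mathcal{T}$ rel $S_1 \cup \dots \cup S_j$, and discarding it should it become parallel to some $S_i$, produces $S_{j+1}$.

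For termination I would appeal to Kneser--Haken finiteness \cite{kneser1929geschlossene, haken1961theorie}: a family of pairwise non-parallel disjoint essential normal spheres in a triangulation with $t$ tetrahedra has at most $c_0 t$ components for an absolute constant $c_0$, since once such a surface meets the handles dual to $\mathcal{T}$ in sufficiently many mutually parallel normal discs one extracts a product region between two of its components, forcing a parallelism. Thus the procedure halts after at most $c_0 t$ steps, and $S = S_1 \cup \dots \cup S_n$ is then a complete collection of disjoint essential normal spheres.

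The substance of the proof is the weight estimate for $S$. At each step the new sphere is extracted as a vertex surface in a triangulation of $X_j$ of size $O(t)$, so it has weight $2^{O(t)}$ \emph{in that triangulation}; the cost is in transporting it back through one cut-and-cap operation to a normal surface in the original $\mathcal{T}$ and normalising it there rel $S_1 \cup \dots \cup S_j$. The bound I would track is that this transfer multiplies weights by at most a factor exponential in $t$ (governed by the weight of the surface $S_1 \cup \dots \cup S_j$ one cuts along, which is exactly the inductive hypothesis). Hence if $B_j$ bounds the weight of $S_1 \cup \dots \cup S_j$, then $B_{j+1} \le 2^{O(t)} B_j + 2^{O(t)}$, so $B_n \le 2^{O(t) n} \le 2^{O(t^2)}$; being deliberately generous with the constants gives $w(S) \le 2^{185 t^2}$.

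The main obstacle is exactly this bookkeeping: one must check that cutting and capping along all the normal spheres at once keeps the triangulation of $X_j$ of size $O(t)$ (so each newly found vertex sphere has weight only $2^{O(t)}$), and that undoing one cut-and-cap and renormalising in $\mathcal{T}$ inflates the weight by no more than a single-exponential-in-$t$ factor, uniformly across the $c_0 t = O(t)$ steps; the quadratic exponent is precisely ``$O(t)$ steps'' times ``an $O(t)$-exponential cost per step'', and is certainly not optimal. A more conceptual alternative, if one wanted a cleaner bound or to avoid the iteration entirely, is to take a complete normal collection of essential spheres of minimal total weight among those with at most $c_0 t$ pairwise non-parallel components, and use Haken-sum and least-weight arguments to show that each component may be assumed a fundamental normal surface, so that the Hass--Lagarias bound $t\, 2^{7t+2}$ applies to it --- the delicate point there being to arrange that replacing a sphere by the fundamental summands of its normal coordinate vector preserves completeness of the collection.
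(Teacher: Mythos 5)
The first thing to note is that the paper does not prove this statement at all: it is quoted directly from King (Lemma~4 of \cite{King}), with only the added remark that King's argument for closed manifolds extends to compact orientable manifolds with boundary (and a pointer to Lemma~2.6 of \cite{mijatovic}). So there is no in-paper argument to compare you against; the expected ``proof'' here is a citation. That said, your sketch has the right architecture for how a bound of shape $2^{O(t^2)}$ arises: at most $O(t)$ pairwise non-parallel disjoint normal spheres can coexist in $\mathcal{T}$ (Kneser--Haken, which the paper itself invokes in the form ``more than $10t$ disjoint normal surfaces forces a parallelism''), each new essential sphere is found as a vertex normal surface of singly exponential weight in a cut-open manifold with $O(t)$ tetrahedra, and the weights compound multiplicatively over the $O(t)$ iterations.

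The genuine gap is the step you yourself flag as ``the main obstacle'': the claim that a vertex normal sphere $\Sigma$ in the retriangulated manifold $X_j$ transfers back to a normal sphere in the original $\mathcal{T}$, disjoint from $S_1,\cdots,S_j$, at a multiplicative cost of only $2^{O(t)}$ times $B_j$. This is not bookkeeping; it is the entire content of the lemma. The triangulation $\mathcal{T}'$ produced by Theorem~\ref{Thm:DecomposeAlongSpheres} is obtained by collapsing the parallelity bundle of $(X \cut S, S')$, so there is no tautological relation between weight in $\mathcal{T}'$ and weight in $\mathcal{T}$: a normal disc of $\Sigma$ that runs horizontally across the parallelity bundle corresponds, back in $\mathcal{T}$, to a sheet travelling parallel to $S_1 \cup \cdots \cup S_j$ through a number of tetrahedra comparable to $w(S_1\cup\cdots\cup S_j)$, so the inflation factor of order $B_j$ must be proved, not asserted; one must also verify that normalising the transferred sphere in $\mathcal{T}$ rel the barrier $S_1\cup\cdots\cup S_j$ preserves essentiality and non-parallelism and does not increase weight, and only then does the recurrence $B_{j+1} \leq 2^{O(t)}B_j$ close up to give the exponent $185t^2$. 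Finally, your proposed ``cleaner alternative''---arranging for every sphere of a complete collection to be a vertex or fundamental surface of the original $\mathcal{T}$---is precisely the Jaco--Tollefson route (Theorem~5.2 of \cite{JacoTollefson}), which the paper explicitly notes is only available for closed $X$ and whose extension to manifolds with boundary ``does not seem so straightforward''; it cannot be substituted here without genuinely new work.
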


It might be possible to improve this estimate. It was shown by Jaco and Tollefson (Theorem 5.2 in \cite{JacoTollefson}) that, when $X$ is closed, it contains a complete collection of essential spheres, each of which is a vertex normal surface. (See Section \ref{Sec:Normal} for the definition of a vertex normal surface.) By Lemma 3.2 in \cite{HassLagarias} a vertex normal surface has weight at most $28t2^{7t-1}$. However, the generalisation of Jaco and Tollefson's argument to manifolds with non-empty boundary does not seem so straightforward. In any case, Theorem \ref{king} is sufficient for our purposes.

Jaco and Tollefson also proved the following result dealing with compression discs for the boundary (Theorem 6.2 in \cite{JacoTollefson}). It refers to a \emph{complete} collection of compressing discs, which means that the manifold obtained by compressing along these discs has incompressible boundary.

\begin{thm}
\label{Thm:JacoTollefsonDiscs}
Let $\mathcal{T}$ be a triangulation of a compact orientable irreducible 3-manifold $X$. Then $X$ has a complete collection of disjoint compressing discs, each of which is a vertex normal surface. Hence, each such disc has weight at most $28t2^{7t-1}$, and their total weight is at most $280t^2 2^{7t-1}$.
\end{thm}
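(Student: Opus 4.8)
The plan is to assemble the collection one disc at a time, keeping it normal and of least weight, and then to deduce the size estimate from the Hass--Lagarias bound on vertex surfaces. For existence: if $\partial X$ is incompressible, take the empty collection; otherwise pick a compressing disc $D_1$ for some boundary component and pass to $X \cut D_1$. This remains irreducible, since a $2$-sphere in $X \cut D_1 \subset X$ bounds a ball $B$ in $X$, and $B$ is disjoint from $\partial X$ (any boundary component, being connected, closed, and disjoint from $\partial B$, lies entirely in $X \setminus B$) hence from the two copies of $D_1$, so $B$ is a ball in $X \cut D_1$. Recurse on each component. The recursion halts because the complexity $\Phi(\partial X) := \sum_{g_i \geq 1}(2g_i-1)$ strictly drops at each step: compressing in a boundary component of genus $g$ gives a component of genus $g-1$ (if $\partial D$ is non-separating) or two components of positive genera summing to $g$ (if $\partial D$ is separating -- it cannot bound a subdisc of the surface, since $D$ is essential), and a compressible torus becomes a sphere; in each case $\Phi$ decreases by $1$ or $2$, and $\Phi \geq 0$. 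When the recursion stops, every boundary component is incompressible, so the discs used form a complete collection, and they are disjoint since each appears in a freshly cut piece.

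\textbf{Making the discs vertex normal surfaces of $\mathcal{T}$.} This is the substantive step, which is Theorem 6.2 of Jaco and Tollefson \cite{JacoTollefson}; I indicate the mechanism. Among all complete collections of pairwise disjoint normal compressing discs in $\mathcal{T}$ (such collections exist, since a least-weight compressing disc in an irreducible manifold is normal), pick one, $\mathcal{D}$, of least total weight. Suppose some component $D$ of $\mathcal{D}$ is not a vertex surface. Then $k(D) = (E)+(F)$ for some $k \geq 1$ and nonempty normal surfaces $E,F$ carried by the same face of $\mathcal{P}(\mathcal{T})$ as $D$. Passing to the geometric Haken sum (so that $kD$, the Haken sum $E+F$, is recovered from $E \sqcup F$ by cut-and-paste along $E\cap F$) one has $\chi(E)+\chi(F)=k\chi(D)=k\geq 1$, so $E\cup F$ has a component of positive Euler characteristic. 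Sphere components bound balls by irreducibility and may be deleted from their summand harmlessly; a disc component either is inessential -- its boundary bounds a disc in $\partial X$, so it cobounds a ball and may be deleted -- or is an essential normal compressing disc, which an innermost/outermost argument renders disjoint from the remaining discs of $\mathcal{D}$. One then checks, by a careful weight count on the Haken sum, that trimming to a complete sub-collection produces total weight strictly below that of $\mathcal{D}$ -- a contradiction. Hence every disc of $\mathcal{D}$ is a vertex normal surface of $\mathcal{T}$.

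\textbf{The weight bounds.} By Lemma 3.2 of \cite{HassLagarias}, a vertex normal surface meets the $1$-skeleton of $\mathcal{T}$ in at most $28t\,2^{7t-1}$ points, which is its weight; hence each disc of $\mathcal{D}$ has weight at most $28t\,2^{7t-1}$. The triangulated surface $\partial X$ has at most $4t$ triangles, so $\chi(\partial X)\geq -2t$ and $\partial X$ has at most $2t$ components; therefore $\sum g_i \leq 3t$ and $\Phi(\partial X)\leq 6t$. Since each compression drops $\Phi$ by at least one, $\mathcal{D}$ contains at most $6t\leq 10t$ discs, and its total weight is at most $10t\cdot 28t\,2^{7t-1}=280\,t^2\,2^{7t-1}$, as claimed.

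\textbf{Main obstacle.} Everything except the weight count inside the Haken-sum step is routine. That step -- verifying that the essential disc components extracted from $k(D)=(E)+(F)$ assemble into a complete collection of strictly smaller total weight while the other discs of $\mathcal{D}$ remain untouched -- is the delicate bookkeeping that forms the core of Jaco and Tollefson's argument, and is where I would expect essentially all the effort to go.
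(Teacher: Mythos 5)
Your proposal is correct and, on the substantive point, identical to what the paper does: the existence of a complete collection of disjoint compressing discs each of which is a vertex normal surface is simply quoted from Theorem 6.2 of Jaco and Tollefson, and the per-disc bound $28t2^{7t-1}$ is the Hass--Lagarias bound on vertex surfaces. The one place you genuinely diverge is the count of the number of discs. The paper gets "at most $10t$ discs'' from Kneser's parallelity argument (any more than $10t$ disjoint normal surfaces must contain two parallel ones, and a weight-minimal complete collection has no parallel pair), whereas you bound the number of essential compressions by the boundary complexity $\Phi(\partial X)\leq 6t$ via an Euler characteristic count on the at most $4t$ boundary triangles. Both give the stated $280t^2 2^{7t-1}$; yours is marginally sharper and more self-contained, while the paper's is the count that it reuses elsewhere. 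One small point to make explicit in your version: the collection to which the weight bound must apply is Jaco--Tollefson's least-total-weight collection $\mathcal{D}$, not the collection built in your first paragraph, so you should note that a least-weight complete collection contains no disc whose compression fails to drop $\Phi$ (any such disc would be boundary-parallel after the other compressions, by irreducibility, and could be deleted, contradicting weight-minimality); with that observation your $6t$ count applies to $\mathcal{D}$ as needed.
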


The final estimate is a consequence of the well known result, essentially due to Kneser \cite{kneser1929geschlossene}, that in any collection of more than $10t$ disjoint normal surfaces, at least two of the surfaces are parallel.



\begin{proof}[Proof of Theorem \ref{thurston ball}]
We are given a triangulation $\mathcal{T}$ of the compact orientable 3-manifold $X$ and a collection of integral simplicial cocycles $\phi_1, \cdots, \phi_b$ that forms a basis for $H^1(X; \mathbb{R})$. Our goal is to compute the Thurston norm ball. Recall that the required output is:
\begin{enumerate}
\item A collection of elements that are integral linear combinations of $\phi_1, \cdots, \phi_b$. These will form a basis $\mathcal{B}$ for the subspace $W$ of $H^1(X;\mathbb{R})$ with Thurston norm zero.
\item A collection $V$ of rational linear combinations of $\phi_1, \cdots, \phi_b$ that project to the vertices of the norm ball in $H^1(X; \mathbb{R}) / W$.
\item A collection $\mathcal{F}$ of subsets of $V$ that form the faces.
\end{enumerate}
These will all be part of our certificate. In addition, the following will also form our certificate:
\begin{enumerate}
\item A normal surface $S$ in $\mathcal{T}$, given via its vector $(S)$, that is in fact a complete collection of disjoint essential spheres. It has weight at most $2^{185t^2}$ where $t = |\mathcal{T}|$.
\item A triangulation $\mathcal{T}'$ for the manifold $X'$ obtained by cutting along $S$ and then attaching a 3-ball to each spherical boundary component.
\item A collection of simplicial 1-cocycles $\phi'_1, \cdots, \phi'_b$ that are the images of $\phi_1, \cdots, \phi_b$ in $H^1(X')$ under the map $H^1(X) \rightarrow H^1(X \cut S) \cong H^1(X')$.
\item A normal surface $D$ in $\mathcal{T}'$, given via its vector $(D)$, that is in fact a complete collection of disjoint compression discs for $\partial X'$. It has weight at most $280 |\mathcal{T}'|^2 2^{7|\mathcal{T}'|-1}$.
\item A triangulation $\mathcal{T}''$ for $X'' = X' \cut D$.
\item A collection of simplicial 1-cocycles $\phi''_1, \cdots, \phi''_b$ that are the images of $\phi'_1, \cdots, \phi'_b$ in $H^1(X'')$.
\item A certificate for the decision problem \textsc{Thurton norm ball for irreducible boundary-irreducible 3-manifolds with $b_1 \leq B$}, which provides the data for the Thurston norm ball of $H^1(X'')$. This data is a basis for the subspace $W''$ of $H^1(X''; \mathbb{R})$ with Thurston norm zero, together with the vertices $V''$ and faces $\mathcal{F}''$ for the norm ball in $H^1(X''; \mathbb{R}) / W''$.
\end{enumerate}

The certificate is verified as follows:
\begin{enumerate}
\item Verification that $S$ is a collection of spheres using the algorithm in \cite{agol2006computational}.
\item Verification that $\mathcal{T}'$ is a triangulation of $X'$ and that $\phi'_1, \cdots, \phi'_b$ are the images of $\phi_1, \cdots, \phi_b$ in $H^1(X')$, using Theorem \ref{Thm:DecomposeAlongSpheres}.
\item Verification that $D$ is a collection of discs using \cite{agol2006computational}.
\item Verification that $\mathcal{T}''$ is a triangulation of $X''$ and that $\phi''_1, \cdots, \phi''_b$ are the images of $\phi'_1, \cdots, \phi'_b$ in $H^1(X'')$ using Theorem \ref{Thm:DecomposeAlongSpheres}.
\item Verification that each component of $X''$ either is irreducible and boundary-irreducible or is a rational homology 3-sphere, using Corollary \ref{Cor:IrredIncompNP}.
\item Verification of the certificate for \textsc{Thurton norm ball for irreducible boundary-irreducible 3-manifolds with $b_1 \leq B$} for the manifold $X''$. Any component of 
$X''$ that is a (possibly reducible) rational homology 3-sphere has trivial Thurston norm, and hence plays no role here. 
\item Verification that we may write $\mathcal{B} = \mathcal{B}_1 \cup \mathcal{B}_2 \cup \mathcal{B}_3$ such that \\
i) the elements of $\mathcal{B}_1$ form a basis for the kernel of the map 
\[H^1(X) \rightarrow H^1(X - N^\circ(S)) \cong H^1(X'),\] 
where $N(S)$ is a tubular neighbourhood of $S$;\\
ii) the elements of $\mathcal{B}_2$ project to a basis for the kernel of the map
\[H^1(X') \rightarrow H^1(X' \cut D) = H^1(X''),\] 
and this projection is one-to-one;\\
iii) the elements of $\mathcal{B}_3$ project to a basis of $W''$ and this projection is one-to-one.
\item Verification that the map $H^1(X) \rightarrow H^1(X'')$ sets up a bijection $V \rightarrow V''$ and a bijection $\mathcal{F} \rightarrow \mathcal{F}''$.
\end{enumerate}

The input to \textsc{Thurton norm ball for irreducible boundary-irreducible 3-manifolds with $b_1 \leq B$} requires a collection of integral cocycles that forms a basis for $H^1(X''; \mathbb{R})$. Although $\phi''_1, \cdots, \phi''_b$ might not form a basis, they do form a spanning set, and therefore some subset of them (which can easily be found) forms a basis.

The output provides integral cocycles that form a basis for the subspace $W''$ of norm zero. It also consists of a set of points $V''$ in $H^1(X''; \mathbb{Q})$ that give the vertices of the norm ball and a collection $\mathcal{F}''$ of subsets of $V''$ that give the faces. Looking at the long exact sequence of the pair $(X, X - N^\circ(S))$ we have
\begin{eqnarray*}
H^1(X, X- N^\circ(S)) \rightarrow H^1(X) \rightarrow H^1(X - N ^\circ(S)) \rightarrow H^2(X, X - N^\circ(S)) \rightarrow \cdots
\end{eqnarray*}
By excision and the Poincar\'{e} duality we have
\[ H^1(X , X - N^\circ(S)) \cong H^1(N(S), \partial N(S)) \cong H_2(N(S)) \cong H_2(S). \]
Similarly $H^2(X , X - N^\circ(S)) \cong H_1(S) \cong 0$. Therefore, the above long exact sequence takes the form
\[ H_2(S) \rightarrow H^1(X) \xrightarrow{p}  H^1(X - N ^\circ(S)) \rightarrow 0. \]
Thus, $p$ is surjective. It is Thurston norm-preserving and its kernel is generated by spheres in $S$. Similarly, the map $H^1(X') \rightarrow H^1(X'')$ is surjective, norm-preserving and its kernel is generated by discs in $D$. Thus, we let $\mathcal{B}_1$ be a basis for the kernel of $p$. We let $\mathcal{B}_2$ be a collection of elements that are sent by $p$ to a basis for the kernel of $H^1(X') \rightarrow H^1(X'')$. Finally, assume that $\mathcal{B}_3$ is a subset of $H^1(X)$ that projects to a basis for the subspace $W''$ of $H^1(X''; \mathbb{R})$ with Thurston norm zero. Then $\mathcal{B} = \mathcal{B}_1 \cup \mathcal{B}_2 \cup \mathcal{B}_3$ is a basis for the subspace $W$ of $H^1(X;\mathbb{R})$ with Thurston norm zero. 
Now, there is an induced isomorphism from $H^1(X; \mathbb{R}) /W$ to $H^1(X'' ;\mathbb{R}) / W''$ which is norm-preserving. Thus, we may obtain the points $V$ in $H^1(X ; \mathbb{Q})$ by running through each element of $V''$ in $H^1(X''; \mathbb{Q})$ and picking a point in its inverse image. A set of points in $V$ spans a face if and only if the corresponding points in $V''$ do. Thus, we obtain the required output for \textsc{Thurton norm ball for 3-manifolds with $b_1 \leq B$}.

We need to show that the certificate exists and can be verified in polynomial time.


By Theorem \ref{king}, there is a complete collection of disjoint essential normal spheres, $S$, in $\mathcal{T}$ with weight at most $2^{185t^2}$ where $t = |\mathcal{T}|$, the number of tetrahedra in $\mathcal{T}$.  The normal coordinates of elements of $S$ are part of the certificate, and are given to us non-deterministically. Now we may decompose the manifold along $S$ and then attach balls to any resulting spherical boundary components. Let $X'$ be the resulting irreducible 3-manifold. Theorem \ref{Thm:DecomposeAlongSpheres} guarantees that we may build a triangulation $\mathcal{T}'$ of $X'$ with no more than $O(|\mathcal{T}|)$ tetrahedra, and simplicial 1-cocycles $\phi_j' \in H^1(X' ; \mathbb{Z})$ such that the cohomology classes $i^*([\phi_j])$ and $[\phi_j']$ are equal and $C_{\mathrm{una}}(\phi_j ')$ is bounded above by a polynomial function of $|\mathcal{T}|$ and $C_{\mathrm{una}}(\phi_j)$. Moreover, this procedure can be done in time that is a polynomial function of $bt ( \log w(S) )(\log(C_{\mathrm{una}}(\phi_j))+1))$, which is bounded above by a polynomial function of $|\mathcal{T}|$ and $C_{\mathrm{una}}(\Phi)$ by our assumption on the weight of $S$ and the complexity of the homology basis. 

  

By Theorem \ref{Thm:JacoTollefsonDiscs}, there is a complete collection of compression discs for $X'$ that are normal in $\mathcal{T}'$ and with weight at most $280 |\mathcal{T}'|^2 2^{7|\mathcal{T}'|-1}$. Applying Theorem \ref{Thm:SpheresAndDiscs}, we may cut along these discs, forming a 3-manifold $X''$ and obtain a triangulation $\mathcal{T}''$ and cocycles $\phi''_1, \cdots, \phi''_b$. As above, the number of tetrahedra is $O(|\mathcal{T}'|)$ and therefore $O(|\mathcal{T}|)$. The cocycles $\phi_j''$ have $C_{\mathrm{una}}$ that is bounded above by a polynomial function of $|\mathcal{T}|$ and $C_{\mathrm{una}}(\Phi)$. The procedure may be completed in polynomial time.

Finally, the certificate for \textsc{Thurton norm ball for irreducible boundary-irreducible 3-manifolds with $b_1 \leq B$} is verified in polynomial time.
\end{proof}

\section{Other representations of the manifold and knot}

In the decision problem \textsc{Knot genus in the fixed 3-manifold $M$}, the manifold $M$ is given to us by means of a diagram $D$ for $\Gamma \cup L$, where $\Gamma$ is a link in $S^3$ and $L$ is a framed link, and $K$ is specified by giving a diagram for $K \cup \Gamma \cup L$ that contains $D$ as a subdiagram. This method of representing $M$ and $K$ is a natural one. However, it also played a critical role in the proof of Theorem \ref{main:boundary}, as the construction of an efficient basis for $H_2(M - N^\circ(K), \partial M \cup \partial N(K))$ relied on this presentation of $M$ and $K$. So it is reasonable to consider other methods for representing $M$ and $K$, and to ask whether the resulting decision problems still lie in \textbf{co-NP}. 

For simplicity, we will focus on closed orientable 3-manifolds $M$, although much of our discussion does generalise to the case of non-empty toroidal boundary.

One way of specifying a closed orientable 3-manifold is by giving a Heegaard splitting for it. Here, we are given a closed orientable surface $S$, a union $\alpha$ of disjoint simple closed curves $\alpha_1, \cdots, \alpha_g$ in $S$ and another collection $\beta$ of disjoint simple closed curves $\beta_1, \cdots, \beta_g$ in $S$, with the property that $S - N^\circ(\alpha)$ and $S - N^\circ (\beta)$ are both planar and connected. We also assume that each component of $S - N^\circ(\alpha \cup \beta)$ is a disc. We suppose that $M$ is obtained by attaching two handlebodies to $S$ so that the curves $\alpha$ bound discs in one handlebody and the curves $\beta$ bound discs in the other handlebody. We think of this presentation of $M$ as fixed and given to us in some way, for example by specifying a triangulation of $S$ in which the curves $\alpha$ and $\beta$ are all simplicial.

We now wish to add $K$ to the picture. We do this by specifying a diagram for $K$ in $S$, in other words an immersed curve with generic double points at which under/over crossing information is specified. We also assume that this immersed curve intersects the $\alpha$ and $\beta$ curves transversely. We call this a \emph{diagram} for $K$. This specifies an embedding of $K$ into $S \times [-1,1]$ and hence into $M$, once we have agreed on the convention that the handlebody with discs attached to the $\alpha$ curves lies on the $S \times \{ - 1\}$ side. We say that the \emph{total crossing number} of $K$ is the sum of the number of crossings of $K$ with itself and its number of intersections with the $\alpha$ and $\beta$ curves. This is our measure of complexity for $K$.

Note that every knot $K$ in $M$ is specified by such a diagram, as follows. Each handlebody is a regular neighbourhood of a graph. We can isotope $K$ off a small open regular neighbourhood of these two graphs. It then lies in the complement of this open neighbourhood, which is a copy of $S \times [-1,1]$. The projection $S \times [-1,1] \rightarrow S$ onto the first factor specifies the diagrammatic projection map. After a small isotopy, the image of $K$ has only generic double point singularities, which form the crossings of $K$ with itself.

Thus, we can phrase the following decision problem. We fix a Heegaard diagram for $M$ in a closed orientable surface $S$, as above.

\medskip 
\noindent \textbf{Problem}: \textsc{Knot genus in the fixed closed orientable 3-manifold $M$ via a Heegaard diagram}.\\
\emph{Input}: A diagram of $K$ in $S$, as above, and an integer $g \geq 0$ in binary.\\
\emph{Input size}: The total crossing number of $K$ plus the number of digits of $g$ in binary.\\ 
\emph{Question}: Is the genus of $K$ less than or equal to $g$?\\

\begin{thm}
	\label{Thm:HeegaardNP}
	\textsc{Knot genus in the fixed closed orientable 3-manifold $M$ via a Heegaard diagram} lies in \textbf{co-NP}.
\end{thm}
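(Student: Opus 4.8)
The plan is to reduce this decision problem, by a polynomial-time many-one reduction, to \textsc{Determining knot genus in the fixed closed 3-manifold $M$}, which lies in \textbf{NP} by Theorem \ref{main}. Concretely, I would convert the fixed Heegaard diagram of $M$ into a fixed surgery diagram for $M$ in $S^3$, and then translate the diagram of $K$ in $S$ into a diagram of $K$ together with the surgery link, increasing the complexity by at most a constant factor.

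The first step is the classical passage from a Heegaard diagram to a surgery presentation. Let $H_\alpha$ be the handlebody of the splitting in which the curves $\alpha$ bound discs, and let $W$ be the double of $H_\alpha$ along $S$; then $W$ is a connected sum of copies of $S^1 \times S^2$, one summand for each handle of $H_\alpha$, and $W$ is obtained from $S^3$ by $0$-framed surgery on a fixed unlink $U$. Moreover $S$ sits inside $W$ as a standardly embedded Heegaard surface, disjoint from $U$, splitting $W$ as $H_\alpha \cup_S H_\alpha'$ where the curves $\alpha$ bound discs on both sides. Pushing $\beta$ slightly off $S$ into $H_\alpha'$ and equipping each component with its surface framing, the manifold obtained from $W$ by integral surgery along $\beta$ is exactly $M$: surface-framed surgery along a component of $\beta$ has the effect of attaching a $2$-handle along it, and since $\beta$ is a complete curve system this turns $H_\alpha'$ into $H_\beta$, so that $W$ surgered along $\beta$ is $H_\alpha \cup_S H_\beta = M$. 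All of this data is determined by the fixed Heegaard diagram, so it yields a fixed diagram $D'$ in $S^3$ for the framed link $U \cup \beta$ realising $M$ as integral surgery, in which moreover a product neighbourhood $S \times [-1,1]$ of $S$ in $M$ appears in a standard way, disjoint from $U \cup \beta$.

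Given a diagram of $K$ in $S$ with total crossing number $c$, I would then produce a diagram of $K \cup U \cup \beta$ in $S^3$ that contains $D'$ as a sub-diagram. This amounts to flattening the standardly embedded surface $S$ into the plane of $D'$ by cutting along a fixed system of arcs and curves: each self-crossing of the diagram of $K$ in $S$ contributes a bounded number of crossings to the planar picture, and the number of strands of $K$ running through each handle of $S$ is bounded by the number of intersection points of $K$ with a fixed curve system on $S$, hence is $O(c)$. Consequently the total crossing number of $K$ in the produced diagram of $K \cup U \cup \beta$ is $O(c)$, and the diagram is produced in time polynomial in $c$. Feeding this diagram, together with the same target integer $g$, into the problem of Theorem \ref{main}, whose input then has size $O(c) + O(\log g)$ and hence is polynomial in the size of the original instance, shows that \textsc{Determining knot genus in the fixed closed orientable 3-manifold $M$ via a Heegaard diagram} lies in \textbf{NP}.

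I expect the only points needing care to be the verification that the surface-framed surgery description of $M$ extracted from the Heegaard diagram is correct, and the bookkeeping showing that the translation of the $K$-diagram blows up the crossing number by only a constant factor; both are standard but should be written out. An alternative, self-contained route avoids the surgery reduction entirely and instead reruns the proof of Theorem \ref{basis} in this setting: triangulate $S$ with $O(c)$ triangles so that the diagram of $K$ and all the curves $\alpha, \beta$ are simplicial, form a product triangulation of $S \times [-1,1]$ glued to fixed triangulations of the two handlebodies, drill $K$, and build an integral basis of $H^1(M - N^\circ(K); \mathbb{Z})$ of complexity $O(c^2)$ out of cocycles detecting algebraic intersection with the fixed $\beta$-discs, for the $H^1(M)$ summand, together with one further cocycle dual to a bounded-complexity $2$-chain bounded by $K$, for the meridional summand; Steps 2--5 of the proof of Theorem \ref{main:boundary} then apply verbatim.
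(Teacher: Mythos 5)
Your overall strategy --- reduce to \textsc{Determining knot genus in the fixed closed 3-manifold $M$} by converting the fixed Heegaard diagram into a fixed surgery diagram and translating the diagram of $K$ --- is exactly the paper's strategy (Theorem \ref{Thm:HeegaardToPlanarDiagram}). But the first step of your reduction, the surgery presentation itself, is wrong. You claim that $M$ is obtained from $W = DH_\alpha \cong \#_g\, S^1 \times S^2$ by surgery on the curves $\beta$, pushed into $H_\alpha'$ and given their surface framing. This is not the classical construction and it fails: surface-framed surgery on a pushed-in copy of $\gamma \subset S$ is not the same as attaching a $2$-handle to $H_\alpha'$ along $\gamma$ (for one thing, it also produces a compressing disc for $\gamma$ on the $H_\alpha$ side, and for a $\gamma$ that already bounds in $H_\alpha'$ it creates an extra $S^1 \times S^2$ summand). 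A concrete counterexample satisfying all the standing hypotheses: take the genus-one diagram of $L(p,q)$ with $\alpha$ a meridian of the $0$-framed unknot $U$ and $\beta$ a $(q,p)$-curve on the Heegaard torus $\partial N(U)$. Your recipe produces the two-component surgery diagram with linking matrix $\left(\begin{smallmatrix} 0 & \ell \\ \ell & pq \end{smallmatrix}\right)$, whose boundary has $|H_1| = \ell^2$ with $\ell \in \{p,q\}$, not $p$. The correct passage from a Heegaard diagram to a surgery diagram is Lickorish's: factor the comparison of gluing maps into Dehn twists along curves $c_1, \dots, c_n$ (which are in general unrelated to $\beta$) and realise each by a $\mp 1$-framed surgery (relative to surface framing) on $c_i$ pushed to its own level of a collar. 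This is precisely why the paper needs Lemma \ref{Lem:DehnTwist} and Remark \ref{Rem:ExtraFingerMoves}: one must track, twist by twist, how the diagram of $K$ and the curves $\beta$ change and keep each change linear in the total crossing number.

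A second, smaller error: your claim that flattening the diagram of $K$ on $S$ into the plane costs only $O(c)$ crossings is false. Strands running over the front and back of a handle cross each other pairwise in the planar projection; equivalently, removing the intersections of $K$ with $\alpha_i$ by sliding across the disc bounded by $\beta_i$ creates about $|K \cap \alpha_i|\cdot|K \cap \beta_i|$ new crossings. A $(p,q)$-torus knot drawn without self-crossings on the standard genus-one surface has total crossing number $p+q$ there but planar crossing number $\min\{p(q-1), q(p-1)\}$, which is genuinely quadratic. The correct bound is $O(c^2)$ (Lemma \ref{Lem:StandardHeegaardToPlanarDiagram}), which is still polynomial and so does not affect the conclusion --- but the surgery-presentation error above is fatal to the reduction as you have written it. Your alternative sketch (rerunning the proof of Theorem \ref{basis} directly on a triangulation built from $S \times [-1,1]$ and the two handlebodies) is a plausible different route, but as stated it omits the substantive points: why the proposed cocycles form an integral basis and why their $C_{\mathrm{nat}}$ is $O(c^2)$.
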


\begin{remark}
	\label{Rem:NonDisc}
	We briefly discuss the above requirement that each component of $S - N^\circ(\alpha \cup \beta)$ is a disc. This almost always happens automatically anyway. Indeed, if some component of $S - N^\circ(\alpha \cup \beta)$ is not a disc, then it contains an essential simple closed curve that bounds a disc in both handlebodies. The Heegaard splitting is then reducible. However, we can always ensure that each component of $S - N^\circ(\alpha \cup \beta)$ is a disc, by performing an isotopy to $\beta$. For if $S - N^\circ(\alpha \cup \beta)$ is not a union of discs, then we can pick a properly embedded essential arc in some component joining the $\beta$ curves to the $\alpha$ curves, and then isotope the relevant $\beta$ curve along it, to introduce two new intersection points between the $\alpha$ curves and the $\beta$ curves. We call this a \emph{finger move}. Repeating this process if necessary, we end with the required collection of $\alpha$ and $\beta$ curves.
	
	The reason for making this requirement is that it avoids the following scenario. Suppose that some component $P$ of $S - N^\circ(\alpha \cup \beta)$ is not a disc. Then we could choose a diagram of some knot $K$ to wind many times around $P$, plus possibly intersect $\partial P$. In this way, we would get infinitely many distinct diagrams, all with the same total crossing number. Thus, in this case, the total crossing number would not become a reasonable measure for the complexity of the diagram.
\end{remark}

We will prove Theorem \ref{Thm:HeegaardNP} by reducing \textsc{Knot genus in the fixed closed orientable 3-manifold $M$ via a Heegaard diagram} to \textsc{Knot genus in the fixed closed orientable 3-manifold $M$}. In order to this, we need an algorithm to translate a diagram for a knot $K$ in a Heegaard surface to a planar diagram for $K$ lying in the complement of some surgery curves. This is provided by the following result.

\begin{thm}
	\label{Thm:HeegaardToPlanarDiagram}
	Let $S$ be a closed orientable surface with curves $\alpha = \alpha_1 \cup \cdots \cup \alpha_g$ and $\beta = \beta_1 \cup \cdots \cup \beta_g$ specifying a Heegaard splitting of $M$. Suppose that $S - N^\circ(\alpha \cup \beta)$ is a union of discs. Then there is a diagram $D$ of a framed link $L$ in $S^3$ that specifies a surgery description of $M$ and that has the following property. Let $K$ be a knot in $M$ given via a diagram of $K$ in $S$ with total crossing number $c$. Then there is a diagram of a knot in the complement of $L$ that is isotopic to $K$, that contains $D$ as a subdiagram and that has total crossing number $O(c^2)$. This may be constructed in polynomial time as a function of $c$. Here, the implied constant depends only on $M$ and the Heegaard splitting, and not on $K$.
\end{thm}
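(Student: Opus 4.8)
The plan is to realise the fixed Heegaard data concretely in $\mathbb{R}^3$ and then simply project everything vertically to a plane, while bookkeeping the number of crossings that this creates. First I would fix, once and for all, a concrete model for the $\alpha$-handlebody: let $G_0\subset\mathbb{R}^2$ be a connected planar graph with $b_1(G_0)=g$, and let $H_\alpha$ be a regular neighbourhood of $G_0$ inside a thin slab $\mathbb{R}^2\times[-\epsilon,\epsilon]$, so that $H_\alpha$ is a standardly embedded genus-$g$ handlebody, $S=\partial H_\alpha$, and the $\alpha_i$ are the meridian curves of the $g$ ``tubes'' of $H_\alpha$. I may assume the given Heegaard splitting is presented this way, so that the $\beta_i$, together with the cell structure coming from $S-N^\circ(\alpha\cup\beta)$ (each piece a disc), are \emph{fixed} curves and arcs drawn on this fixed surface $S$. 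It is a classical fact \cite{lickorish1962representation,wallace1960modifications} that $M$ admits a surgery description by a framed link $L$ which, after an isotopy, lies in an arbitrarily small product neighbourhood $S\times[-1,1]$ of $S$; one standard such $L$ is built from the $\beta_i$ (or their surface-framed push-offs) together with a fixed auxiliary sublink with integral framings, obtained if necessary by replacing rational surgery coefficients by chains of unknots. This link depends only on $M$ and the splitting. Let $D$ be the diagram of $L$ obtained by the vertical projection $\mathbb{R}^2\times[-\epsilon,\epsilon]\to\mathbb{R}^2$, with framings recorded as blackboard framing plus a fixed correction; $D$ depends only on $M$ and the splitting.

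Next, given a diagram of $K$ in $S$ with total crossing number $c$, I would realise $K$ as a curve lying on a push-off $S\times\{t\}$ (with its self-crossings resolved into the given over/under data and with $t$ chosen so that $K$ is disjoint from $L$); the resulting knot is isotopic in $M$ to the given $K$, and it contains $D$ as a subdiagram since $L$ is not perturbed. Now project $K\cup L$ by the same vertical projection. Away from the tubes and vertices of $G_0$ this projection is two-to-one on $S$ (the ``top'' and ``bottom'' sheet of each tube), so the crossings of the resulting diagram are of the following types, which I would enumerate and bound. Write $a_i$ for the number of strands of $K$ running through the $i$-th tube; since each such strand meets $\alpha_i$, we have $\sum_i a_i=O(c)$. (i) Two strands of $K$ on opposite sheets of the same tube: at most $O(a_i^2)$ per tube, and the routing of the $O(c)$ strands of $K$ incident to a fixed vertex of $G_0$ likewise contributes $O(c^2)$; summing, $\sum_i a_i^2\le(\sum_i a_i)^2=O(c^2)$. (ii) A strand of $K$ meeting a strand of $L$ near a tube or vertex: since $L$ is fixed, this is $O(a_i)$ per tube, hence $O(c)$ total. (iii) Crossings inherited from the diagram of $K$ in $S$ itself (its self-crossings and its intersections with $\beta$) can be placed in the planar part of the neighbourhood of $G_0$ and project faithfully, contributing $O(c)$. (iv) Crossings among strands of $L$, and of $K$ with the fixed auxiliary sublink, are $O(1)$ and $O(c)$ respectively. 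Thus the total crossing number of the new diagram is $O(c^2)$, and each step — computing the cyclic orders of strands at each tube and vertex from the combinatorial description of the diagram of $K$ in $S$, then drawing the connecting arcs — is performed in time polynomial in $c$.

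I expect the main obstacle to be the crossing count rather than the existence of $D$: one must comb the $a_i$ strands through each tube, and the $O(c)$ strands through each vertex region, so that any two of them cross a bounded number of times, in order that the contribution really be $\sum_i a_i^2\le(\sum_i a_i)^2=O(c^2)$ and not worse. One must also check that the blackboard framing of each component of $L$ — hence the framing data of $D$ — is unaffected by the presence of $K$, which holds because adding over/under crossings of another strand does not change a component's self-writhe. The remaining point requiring care is the classical translation from the Heegaard diagram to a surgery diagram, including the replacement of rational coefficients by integral ones; but this is standard and, crucially, is carried out once using only the fixed data $M,\alpha,\beta$, so it affects only the constants implicit in $D$ and in the $O(c^2)$ bound, which is exactly what the statement allows.
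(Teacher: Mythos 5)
Your route is genuinely different from the paper's: the paper never projects the Heegaard surface to the plane, but instead writes the gluing map as a product of Dehn twists, realises each twist by a $\mp 1$ surface-framed surgery curve pushed into a collar of $S$ (Lemma \ref{Lem:DehnTwist}), reduces to the standard splitting by bigon moves (Lemma \ref{Lem:Isotopy}), and only then obtains a planar diagram by first pushing $K$ off the $\alpha$ curves across the $\beta$ discs at a cost of $\sum_i |K\cap\alpha_i|\,|K\cap\beta_i|\le c^2$ crossings and compressing along $\alpha$ (Lemma \ref{Lem:StandardHeegaardToPlanarDiagram}). Your ``embed and project vertically'' strategy is a reasonable alternative, but as written it has two genuine gaps.

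First, your description of $L$ is wrong. Surface-framed surgery on push-offs of the $\beta_i$ does not produce $M$: for the standard genus-one splitting of $S^3$ itself, the surface framing of $\beta$ is the $0$-framing of an unknot, and $0$-surgery yields $S^1\times S^2$, not $S^3$. The correct fixed link is the collection of Dehn-twist curves occurring in a factorisation of the gluing map, placed at nested levels of the collar with $\mp 1$ surface framings; this still depends only on $M$ and the splitting and lies in $S\times[-1,1]$, so your construction survives the repair, but the diagram $D$ you propose is not a surgery diagram for $M$. (One must also then place $K$ on the $H_\alpha$ side of \emph{all} of these levels, not merely ``disjoint from $L$'', or the surgered picture produces the wrong knot. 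Finally, the problem requires the surgery slopes to \emph{equal} the diagrammatic framing, not to differ from it by a correction; this is fixed by adding a bounded number of kinks to $D$.)

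Second, the crossing count --- which you correctly identify as the main obstacle --- is asserted rather than proved, and both halves of it require work. (a) The claim that only $O(c)$ strand-passages occur through each tube and each vertex region does not follow from ``each strand meets $\alpha_i$'': a strand can enter a region and leave again, or wind within the planar part of $N(G_0)$, without meeting $\alpha\cup\beta$ at all and hence at zero cost to $c$. The bound holds only because $S-N^\circ(\alpha\cup\beta)$ is a union of discs, so that each of the $O(c)$ arcs of $K$ between consecutive points of $K\cap(\alpha\cup\beta)$ lies in one such disc and can be normalised to meet each fixed region of $S$ a bounded number of times; this is precisely why the hypothesis is imposed (compare Remark \ref{Rem:NonDisc}), and you never invoke it. (b) The ``combing'' must be an ambient isotopy of $S$ carrying the entire tangle in each complementary disc at once --- you cannot straighten arcs independently through one another without changing the knot type, since removing an incoherent bigon is not a valid move. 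The standard fix is to isotope the tangle in each complementary disc into a small box lying in a single sheet of the projection, with straight legs running to $\partial P$; one must then still bound the crossings between the contents of a box and the $O(c)$ strands from the opposite sheet whose projections pass over it. None of this is unfixable, but it is the substance of the theorem, and the proposal does not contain it.
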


We start with the case of the \emph{standard} Heegaard splitting for $S^3$. This has curves $\alpha_1, \cdots, \alpha_g$ and $\beta_1, \cdots, \beta_g$ satisfying $|\alpha_i \cap \beta_j| = \delta_{ij}$.

\begin{lem}
	\label{Lem:StandardHeegaardToPlanarDiagram}
	Let $S$ be a closed orientable surface with genus $g$, equipped with curves that give the standard genus $g$ Heegaard splitting for the 3-sphere. Let $K$ be a knot given by a diagram in $S$ with total crossing number $c$. Then there is a diagram for $K$ in the plane with crossing number at most $c^2$. This may be constructed in polynomial time as a function of $c$. This remains true if $K$ is a link with several components. Furthermore, some of its components may be framed via surface framing in $S$, in which case we can also require that the resulting planar diagram specifies the same framing on these components.
\end{lem}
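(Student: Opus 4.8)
The plan is to realise the standard genus $g$ Heegaard surface of $S^3$ concretely and then push $K$ bodily into the plane. Fix the standard picture of the genus $g$ handlebody $W \subset \mathbb{R}^3 \subset S^3$, namely a ball with $g$ unknotted, unlinked $1$-handles attached, arranged so that $S := \partial W$ is the union of a planar surface $S'$ (a $2g$-holed sphere) lying in the plane $\mathbb{R}^2$, together with $g$ disjoint tubes $T_1, \dots, T_g$ hanging below $S'$, where $T_i$ joins a pair $\alpha_i^+, \alpha_i^-$ of the $2g$ boundary circles of $S'$ and these pairs are non-crossing along $\partial S'$ (this non-crossing property is exactly what makes the handlebody trivially embedded). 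In this model the meridians of the handles are the curves $\alpha_i$, so $S' = S - N^\circ(\alpha)$, and the over-the-handle loops are the $\beta_i$, whence $|\alpha_i \cap \beta_j| = \delta_{ij}$. The knot $K \subset S \times [-1,1]$ lies in a bicollar of $S$, which here is a bicollar of $S'$ in $\mathbb{R}^2 \times [-1,1]$ together with bicollars of the tubes.

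First I would cut the given diagram along $\alpha$. Since the diagram of $K$ in $S$ is transverse to $\alpha$ and meets it in $k \le c$ points, where $k = \sum_i k_i$ and $k_i = |K \cap \alpha_i|$, cutting turns $K$ into $k$ arcs properly embedded in $S' \times [-1,1]$ (or, when $k = 0$, into closed curves, which is the easy case), carrying a diagram on $S'$ whose only crossings are the $s \le c$ self-crossings of the original diagram. Because $S'$ is planar, laying it into $\mathbb{R}^2$ transports these arcs to arcs in the plane, pairwise disjoint apart from those $s$ crossings, having $k_i$ of their $2k$ endpoints on each of $\alpha_i^+$ and $\alpha_i^-$. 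The diagram of $K$ is recovered by reinstating the tubes below the plane: for each $i$, connect the $k_i$ endpoints on $\alpha_i^+$ to the matching endpoints on $\alpha_i^-$ --- matched by the cutting identification $\alpha_i^+ \to \alpha_i^-$ --- by $k_i$ parallel strands running through $T_i$. Projected to $\mathbb{R}^2$, each such strand lies entirely under the $S'$-part, and strands from different tubes are disjoint because the pairs of feet are non-crossing.

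The crossing estimate is the crux. The planar diagram just built has three kinds of crossings: the $s \le c$ self-crossings of the $S'$-part; for each $i$, at most $\binom{k_i}{2}$ crossings arising inside $T_i$ when the cyclic orders of endpoints on $\alpha_i^+$ and on $\alpha_i^-$ disagree; and crossings of tube strands with $S'$-arcs. For the last kind, the $S'$-diagram has $O(c)$ vertices and edges and so cuts $\mathbb{R}^2$ into $O(c)$ regions; hence the band realising $T_i$ may be routed as a taut embedded band meeting the $S'$-arcs $O(c)$ times, contributing $O(k_i c)$ crossings. Summing, the total is $s + \sum_i \binom{k_i}{2} + \sum_i O(k_i c) = O\bigl((\sum_i k_i)^2 + c^2\bigr) = O(c^2)$; with the flattening of $S'$ and the $g$ band routings chosen so that each tube strand meets each $S'$-arc at most once, this improves to at most $c^2$. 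All the steps --- finding $K \cap \alpha$, cutting, computing a planar embedding of $S'$ carrying the arcs, and routing each band by a shortest path in the region adjacency graph --- are carried out in time polynomial in $c$.

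The remaining assertions follow from the geometric nature of the construction. For a link with several components the same argument applies componentwise. If a component $K_j$ is framed by a pushoff inside $S$, that pushoff is cut, flattened and re-tubed in parallel with $K_j$ throughout; since flattening a surface into $\mathbb{R}^2$ and running parallel strands through a straight tube below the plane create no relative twisting in the projection, the blackboard framing of $K_j$ in the resulting planar diagram agrees with its surface framing. I expect the main obstacle to be exactly the crossing bookkeeping: one must pin down the planar model of $S'$ and the routings of the $g$ tubes precisely enough that the three contributions above total at most $c^2$, rather than settling for the coarser bound $O(c^2)$.
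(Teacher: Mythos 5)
Your construction is essentially the paper's: the paper also eliminates the intersections of $K$ with the $\alpha$ curves and then compresses $S$ along $\alpha_1,\cdots,\alpha_g$ to land in the plane; your ``cut along $\alpha$, flatten $S-N^\circ(\alpha)$, reconnect by strands passing under the plane'' is the same move in different clothing, and it does produce the correct knot in polynomial time. The gap is in the crossing count. Your rigorous bound is $O(c^2)$ (shortest dual path gives $O(c)$ under-crossings per handle strand), which would suffice for Theorem \ref{Thm:HeegaardToPlanarDiagram} but not for the lemma as stated; and the proposed sharpening --- that the bands can be routed so that ``each tube strand meets each $S'$-arc at most once'' --- is false in general. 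A single arc of the $S'$-diagram can spiral $n$ times around the foot $\alpha_i^+$, so that every path from $\alpha_i^+$ to $\alpha_i^-$ crosses that one arc about $n$ times; no routing avoids this. (Such a spiral is not cheap on $S$: it contributes about $n$ to $|K\cap\beta_i|$ and hence to $c$ --- which is exactly the quantity your accounting never uses.)

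The missing idea, and the one the paper uses, is to route each of the $|K\cap\alpha_i|$ handle strands parallel to $\beta_i\setminus T_i$. By transversality of the original diagram with $\beta$, such a path meets the $S'$-part of the diagram exactly $|K\cap\beta_i|$ times, strands from different handles stay disjoint since the $\beta_j$ are disjoint, and one gets the count $c_K+\sum_i|K\cap\alpha_i|\,(|K\cap\beta_i|-1)\leq c^2$. In the paper this is phrased intrinsically: first isotope the points of $K\cap\alpha_i$ into a small neighbourhood of $\alpha_i\cap\beta_i$ and slide $K$ across the disc bounded by $\beta_i$, which makes $K$ disjoint from $\alpha$ at the cost of $|K\cap\alpha_i|$ new crossings near each point of $K\cap\beta_i$; then compress along $\alpha$. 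If you replace your shortest-path routing by the $\beta_i$-routing, your argument closes. (One further small point: your identification of $S-N^\circ(\alpha)$ with the flat part $S'$ forces the tubes to be the annular neighbourhoods $N(\alpha_i)$, and then $K\cap T_i$ need not consist of parallel strands --- it can wind and carry crossings; you should either take $N(\alpha_i)$ thin, so that all crossings and winding live in the part you flatten, or first justify straightening $K$ inside each tube.)
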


\begin{proof}
	Let $c_K$ be the number of crossings in $S$ between $K$ and itself. Then the total crossing number $c$ of $K$ is
	\[ c_K + \sum_i |K \cap \alpha_i| + \sum_i |K \cap \beta_i|. \]
	We will modify the given diagram of $K$ in $S$ so that it becomes disjoint from the $\alpha$ curves. So consider a curve $\alpha_i$. We may isotope its intersection points with $K$ so that they all lie in a small neighbourhood of the point $\alpha_i \cap \beta_i$. We may then isotope $K$ across the disc bounded by $\beta_i$. This has the effect of removing these points of $\alpha_i \cap K$, but possibly introducing new crossings of $K$. Near each point of $K \cap \beta_i$, we get $|\alpha_i \cap K|$ new crossings of $K$. Thus, after these modifications, the number of crossings between $K$ and itself is 
	\[ c_K + \sum_i |K \cap \alpha_i| \cdot |K \cap \beta_i| \]
	which is clearly at most $c^2$.
	We now use this to create a diagram for $K$ in the plane. We compress $S$ along the curves $\alpha_1, \cdots, \alpha_g$. Since the diagram for $K$ is now disjoint from these curves, the result is a diagram for $K$ in the 2-sphere, and hence the plane.
\end{proof}

We now extend this to slightly more general Heegaard splittings for $S^3$.

\begin{lem}
	\label{Lem:Isotopy}
	Let $S$ be a closed orientable surface with genus $g$. Let $\alpha = \alpha_1 \cup \cdots \cup \alpha_g$ be a union of disjoint simple closed curves that cut $S$ to a planar connected surface. Let $\beta = \beta_1 \cup \cdots \cup \beta_g$ be another collection of disjoint simple closed curves with the same property. Suppose that there is an isotopy taking $\beta$ to curves that, with $\alpha$, form the standard Heegaard splitting for $S^3$. Let $K$ be a knot given by a diagram in $S$ with total crossing number $c$. Then there is a planar diagram for $K$ in $S^3$ with crossing number at most $c^2$. This diagram may be constructed in a polynomial time as a function of $c$. Here, the implied constants depend on the curves $\alpha$ and $\beta$ but not $K$. This remains true if $K$ is a link with several components, some of which may be framed.
\end{lem}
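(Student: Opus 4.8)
The plan is to reduce the statement to Lemma~\ref{Lem:StandardHeegaardToPlanarDiagram} by first replacing $\beta$ by an isotopic multicurve that is in \emph{standard} position with respect to $\alpha$, while controlling how this replacement affects the diagram of $K$.

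First I would record that the hypothesis forces $i(\alpha_i,\beta_j)=\delta_{ij}$, where $i(\cdot,\cdot)$ is the geometric intersection number. Indeed, if $\beta'$ is isotopic to $\beta$ with $|\alpha_i\cap\beta'_j|=\delta_{ij}$, then there are no bigons between any $\alpha_i$ and $\beta'_j$ (a bigon would have two corners, each a point of $\alpha_i\cap\beta'_j$), so $\beta'$ is in minimal position with $\alpha$ and the geometric intersection numbers are as stated. Consequently the number $P_0:=|\alpha\cap\beta|$ of intersection points of the \emph{given} curves is a constant depending only on $\alpha$ and $\beta$, and $\beta$ can be brought into minimal position with $\alpha$ by successively removing at most $P_0/2$ innermost $\alpha$--$\beta$ bigons. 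After these moves the resulting curves $\beta''$ are in minimal position with $\alpha$, hence $|\alpha_i\cap\beta''_j|=i(\alpha_i,\beta_j)=\delta_{ij}$; since $\beta''$ is isotopic to $\beta$, and hence to $\beta'$, the pair $(\alpha,\beta'')$ still defines a Heegaard splitting of $S^3$, so it is a standard genus $g$ Heegaard diagram of $S^3$ in the sense of Lemma~\ref{Lem:StandardHeegaardToPlanarDiagram}.

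Next I would carry the diagram of $K$ along during the bigon removals. A single move isotopes an arc $b\subset\beta$ across an innermost bigon disc $B$, bounded by $b$ and an arc $a\subset\alpha$ with $\mathrm{int}(B)$ disjoint from $\alpha\cup\beta$, pushing it to the $\alpha$-side of $B$; this is an isotopy of $\beta$ in $S$ that fixes $\alpha$ and pushes the part of the diagram of $K$ inside $B$ out of the way, creating new crossings of $K$ with $\beta$. Because the pushed-off arc runs parallel to $a$, a strand of $K$ meeting $B$ changes $|K\cap\beta|$ only through its intersections with $a$, so each move increases $|K\cap\beta|$ by at most a bounded multiple of $|K\cap a|\le|K\cap\alpha|$; as only $O(1)$ moves occur, the total crossing number of $K$ with respect to $\alpha\cup\beta''$ remains $O(c)$, with implied constant depending only on $\alpha$ and $\beta$. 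The self-crossings of $K$ and the intersections $|K\cap\alpha|$ do not change, none of these isotopies alters the isotopy class of $K$ in $S^3$, and, for a component of $K$ framed by its push-off in $S$, this surface framing is preserved since all the moves take place in $S$.

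Finally I would apply Lemma~\ref{Lem:StandardHeegaardToPlanarDiagram} to $K$ relative to the standard Heegaard diagram $(\alpha,\beta'')$: as $K$ has total crossing number $O(c)$ there, this yields a planar diagram for $K$ in $S^3$ with $O(c^2)$ crossings, realising the prescribed surface framings, and constructible in polynomial time. Locating an innermost bigon and updating the diagram of $K$ at each stage is polynomial in $c$ (the planar complex $\alpha\cup\beta\cup K$ has $O(P_0+c)$ cells), and there are only $O(1)$ stages, so the whole construction runs in polynomial time. I expect the main obstacle to be the bookkeeping of the third paragraph: one must arrange the bigon removals so that their number depends only on $\alpha$ and $\beta$, and bound, uniformly in $K$, the number of $K$--$\beta$ crossings created at each step linearly in $c$; this linear-in-$c$ control is exactly what produces the $O(c^2)$ bound and not a worse one.
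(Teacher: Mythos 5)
Your proof follows the same route as the paper: realise the isotopy from $\beta$ to standard position as a sequence of innermost-bigon moves, carry the diagram of $K$ through each move, and then invoke Lemma~\ref{Lem:StandardHeegaardToPlanarDiagram}. The one place you fall short of the stated conclusion is quantitative: your bookkeeping lets each bigon move add $O(|K\cap\alpha|)$ new crossings, so after the $O(1)$ moves you only have total crossing number $O(c)$ and hence a planar diagram with $O(c^2)$ crossings, whereas the lemma asserts the clean bound $c^2$; the paper achieves this by choosing, at each bigon $D$, whether to pull the portion of $K$ inside $D$ through $\alpha$ or through $\beta$ (whichever is cheaper), so that the number of crossings of $K$ with $\partial D$ after the move is $2\min\{c_\alpha,c_\beta\}\le c_\alpha+c_\beta$ and the total crossing number never increases. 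This makes no difference to the application (Theorem~\ref{Thm:HeegaardToPlanarDiagram} only needs $O(c^2)$), and your opening observation that the bigon criterion forces $i(\alpha_i,\beta_j)=\delta_{ij}$ --- so the isotopy really can be realised by a bounded number of bigon moves --- usefully justifies a step the paper leaves implicit.
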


\begin{proof}
	We are assuming that there is an isotopy taking $\beta_1, \cdots, \beta_g$ to curves $\beta'_1, \cdots, \beta'_g$ satisfying $|\alpha_i \cap \beta'_j| = \delta_{ij}$. This isotopy may be performed by performing a sequence of \emph{bigon} moves; see for example Proposition 1.7 of \cite{farb2011primer}. Here, one has a disc $D$ in $S$ with the interior of $D$ disjoint from $\alpha$ and $\beta$, and with $\partial D$ consisting of a sub-arc of an $\alpha$ curve and a sub-arc of a $\beta$ curve. The isotopy slides this $\beta$ arc across $D$. We shall show how to create a new diagram for $K$ in $S$ when such a move is performed. This will have the property that the total crossing number of the new diagram is at most the total crossing number of the old diagram. Hence, after these moves are performed, we may construct a diagram for $K$ in the plane with crossing number at most $c^2$, using Lemma \ref{Lem:StandardHeegaardToPlanarDiagram}.
	
	Within the disc $D$, there is a portion of the diagram for $K$. We will pull this portion of the diagram entirely through $\alpha$ or through $\beta$, so that after this, the arcs of $K$ within $D$ run directly from $\alpha$ to $\beta$ without any crossings. The choice of whether to slide this portion of the diagram through $\alpha$ or $\beta$ is made so that it does not increase the number of crossings. Thus, if there are $c_\alpha$ crossings between $K$ and $\alpha$ along $\partial D$, and $c_\beta$ crossings between $K$ and $\beta$ along $\partial D$, then after this operation, the number of crossings between $K$ and $\partial D$ is $2 \min \{ c_\alpha, c_\beta \}$. Thus, the total crossing number of $K$ has not gone up. After this, we may isotope $\beta$ across $D$ without changing the number of crossings.
\end{proof}

\begin{lem}
	\label{Lem:DehnTwist}
	Let $S$ be a closed orientable surface with genus $g$. Let $\alpha$ be disjoint simple closed curves that cut $S$ to a planar connected surface. Let $\beta$ be another collection of disjoint simple closed curves with the same property. Suppose that each component of $S - N^\circ(\alpha \cup \beta)$ is a disc. Let $C$ be an essential simple closed curve in $S$. Then there is a constant $\lambda \geq 1$ with the following property. Let $K$ be a link, some components of which may be framed, given by a diagram in $S$ with total crossing number $c$. Let $K'$ be obtained from $K$ by Dehn twisting about $C$, and let $\beta'$ also be  obtained from $\beta$ by Dehn twisting about $C$. Then the total crossing number of the diagram on $S$ given by $K' \cup C$ with respect to the curves $\alpha$ and $\beta'$ is at most $\lambda c + \lambda$. Moreover, this diagram may be constructed in polynomial time as a function of $c$.
\end{lem}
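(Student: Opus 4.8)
The plan is to exploit that the Dehn twist $\tau_C$ is a homeomorphism of $S$, supported in a thin annular neighbourhood $N(C) = C \times [-1,1]$, with $\tau_C(C) = C$ as a set, and that it extends to the homeomorphism $\hat\tau_C = \tau_C \times \mathrm{id}$ of $S \times [-1,1]$, which preserves the surface framing of any framed component. Recall $K' = \tau_C(K)$ and $\beta' = \tau_C(\beta)$. Before twisting, I would fix the position of $C$ once and for all so that it meets $G := \alpha \cup \beta$ minimally; then $\mu := |C \cap G|$, as well as $|C\cap\alpha|$ and $|C\cap\beta|$, are constants depending only on $S,\alpha,\beta,C$ and not on $K$. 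The total crossing number of the diagram $K' \cup C$ with respect to $\alpha$ and $\beta'$ is the sum of the self-crossings of $K'$, of $|K'\cap C|$, $|K'\cap\alpha|$, $|K'\cap\beta'|$, $|C\cap\alpha|$ and $|C\cap\beta'|$, so it suffices to bound each of these by $O(c)$ (the last two being already bounded by constants).

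Four of the six terms are immediate. Since $\tau_C$ is a homeomorphism carrying $K$ to $K'$, the immersed curve $K'$ has exactly as many self-crossings as $K$, hence at most $c$; and $|K'\cap\beta'| = |\tau_C(K)\cap\tau_C(\beta)| = |K\cap\beta| \le c$. Since also $\tau_C(C) = C$, we get $|K'\cap C| = |\tau_C(K)\cap\tau_C(C)| = |K\cap C|$ and $|C\cap\beta'| = |\tau_C(C)\cap\tau_C(\beta)| = |C\cap\beta|$. So the whole estimate reduces to bounding $|K\cap C|$ and $|K'\cap\alpha|$.

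To control $|K\cap C|$, I would isotope $K$ cell by cell: within each disc $P$ of $S - N^\circ(G)$, keeping $\partial P$ fixed, push $K$ into minimal position with respect to the arcs $C\cap P$. This leaves $|K\cap G| \le c$ unchanged, and — because $P$ is a disc, which is exactly where the hypothesis that $S - N^\circ(\alpha\cup\beta)$ is a union of discs is used — afterwards each arc of $K$ in $P$ meets each arc of $C$ in $P$ at most once; summing over $P$ and using that the total number of arcs of $K$ in the cells equals $|K\cap G|$ gives $|K\cap C| \le \mu\,|K\cap G| \le \mu c$. The delicate point here, and the step I expect to be the main obstacle, is that this isotopy must be realised without increasing the number of self-crossings of $K$: pushing an arc of $K$ across a $K$--$C$ bigon is innocuous only when the bigon's interior avoids the rest of $K$, so one has to argue — working in the ball $P\times[-1,1]$ and using the vertical direction to comb the strands of $K$ past one another, rather than across one another — that the reduction can always be effected by an isotopy of the knot whose projection never gains crossings.

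Finally, for $|K'\cap\alpha|$: outside $N(C)$ one has $K' = K$, so $K'$ meets $\alpha \setminus N^\circ(C)$ in at most $|K\cap\alpha| \le c$ points. Inside $N(C)$, after a one-time isotopy straightening $\alpha\cap N(C)$ to its $|\alpha\cap C|$ vertical fibres and (using the position from the previous step) the at most $|K\cap C|$ strands of $K\cap N(C)$ to fibres, the standard twist model $(\theta,t)\mapsto(\theta + 2\pi t,\,t)$ sends each strand of $K$ to one that winds once around $C$ and meets each vertical $\alpha$-fibre in exactly one point, distinct strands remaining disjoint. Thus $K'$ meets $\alpha\cap N(C)$ in at most $|K\cap C|\cdot|\alpha\cap C| \le \mu\,|\alpha\cap C|\,c$ points and acquires no new self-crossings there. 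Adding the six contributions yields a bound $\lambda c + \lambda$ with $\lambda$ depending only on $S,\alpha,\beta,C$. Each step — fixing the positions of $C$ and of $\alpha$ near $C$, the cellwise isotopy of $K$, and applying the fixed homeomorphism $\tau_C$ to a diagram with $O(c)$ crossings and $O(c)$ strands inside $N(C)$ — operates on data of size $O(c)$ over fixed ambient structures, so the new diagram is produced in time polynomial (indeed linear) in $c$. Framed components cause no trouble: $\hat\tau_C$ preserves surface framings and fixes any integer framing offset, as does the cellwise isotopy, so the output diagram records the correct framings.
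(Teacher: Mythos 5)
Your proposal is correct and follows essentially the same route as the paper: put $C$ in minimal position with respect to $\alpha\cup\beta$, isotope $K$ within the polygonal cells of $S - N^\circ(\alpha\cup\beta)$ so that $|K\cap C| = O(c)$, and then count crossings term by term, noting that the twist preserves self-crossings and $K$--$\beta$ crossings while adding $|C\cap\alpha|$ crossings with $\alpha$ per point of $K\cap C$. The one ``delicate point'' you flag is sidestepped in the paper by sliding all crossing content of $K\cap P$ into a corner of each polygon away from $C$ and leaving only straight connector arcs (so the crossing count is manifestly unchanged), rather than by a minimal-position/bigon argument, but this is the same idea in a slightly different dressing.
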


\begin{proof}
	By assumption, each component of $S - N^\circ(\alpha \cup \beta)$ is a disc. We realise this as a convex Euclidean polygon $P$ with straight sides, where each side is parallel to an arc of intersection with $\alpha$ or $\beta$. We may assume that $C$ intersects $\alpha \cup \beta$ minimally, and hence that its intersection with this disc consists of straight arcs. Pick a point $p$ in the interior of $P$ disjoint from $C$. Let $\epsilon P$ be the result of performing a dilation to $P$ based at $p$, with scale factor $\epsilon > 0$ small enough so that $\epsilon P$ is disjoint from $C$. We now isotope the diagram of $K$ within $P$, without changing the points of $K \cap \partial P$, as follows. We rescale the diagram using the dilation based at $p$ so that it lies within $\epsilon P$. Then in the annular region $P \cut \epsilon P$, we set the diagram to be a collection of disjoint straight arcs, each running from a point on $\partial P$ to a point on $\partial (\epsilon P)$ along a straight line that goes through $p$. Each intersection point between $K$ and $C$ lies in a straight arc of $K$, and this straight arc has an endpoint on $\alpha \cup \beta$. Thus, there is a constant $\lambda_1>0$, depending on $\alpha$, $\beta$ and $C$, such that the number of crossings between $K$ and $C$ is at most $\lambda_1 c$. We now perform the Dehn twist about $C$, giving the link $K'$ and the curves $\beta'$. The intersection points between $K'$ and $\beta'$ correspond to the intersection points between $K$ and $\beta$. The crossings of $K'$ with itself correspond to the crossings of $K$ with itself. Each crossing between $K$ and $C$ gives $|C \cap \alpha|$ extra crossings between $K'$ and $\alpha$. Thus, the total crossing number of $K$ goes up by a factor of at most $1+ \lambda_1 |C \cap \alpha|$. We also need to consider the crossings involving $C$. There are at most $\lambda_1 c$ of these with $K'$, and at most a constant number with $\alpha \cup \beta'$. The required bound then follows.
\end{proof}

\begin{remark}
	\label{Rem:ExtraFingerMoves}
	In the above lemma, we made the hypothesis that each component of $S - N^\circ(\alpha \cup \beta)$ is a disc. We would like to ensure that $\alpha$ and $\beta'$ have the same property, in other words that each component of $S - N^\circ(\alpha \cup \beta')$ is a disc. However, this might not be the case. Near $C$, there are various components of $S - N^\circ(\alpha \cup \beta)$. The components of $S - N^\circ(\alpha \cup \beta')$ are obtained by cutting along $C$ and then possibly gluing some of these together in a different way. An example is shown in Figure \ref{fig:dehn}, where a component of $S - N^\circ(\alpha \cup \beta')$ is obtained from two components of $S - N^\circ(\alpha \cup \beta \cup C)$ glued together. However, if this process does create some components of $S - N^\circ(\alpha \cup \beta')$ that are not discs, they may be cut into discs using finger moves, as in Remark \ref{Rem:NonDisc}. The number of finger moves that are needed is at most $|C \cap (\alpha \cup \beta)|$. This has the effect of increasing the total crossing number of $K' \cup C$ by at most $2|K \cap C|$, which is at most a constant times $c$.
\end{remark}

\begin{figure}[h]
\centering
\includegraphics[width=0.7\textwidth]{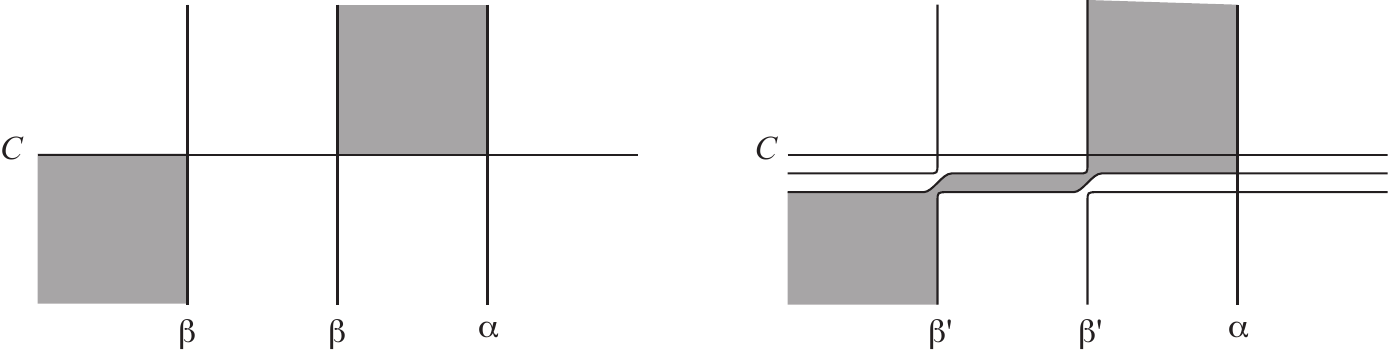}
\caption{Dehn twisting $\beta$ along $C$} \label{fig:dehn}
\end{figure}

\begin{proof}[Proof of Theorem \ref{Thm:HeegaardToPlanarDiagram}]
	We are given a closed orientable surface $S$ with curves $\alpha$ and $\beta$ specifying a fixed Heegaard splitting of $M$. We are also given a diagram in $S$ of a knot $K$ with total crossing number $c$. We will change the diagram and the Heegaard splitting in a sequence of modifications. There is an orientation-preserving homeomorphism of $S$ taking the curves $\beta = \beta_1 \cup \cdots \cup \beta_g$ to curves $\beta'' = \beta''_1 \cup \cdots \cup \beta''_g$ that satisfy $\alpha_i \cap \beta''_j = \delta_{ij}$. This homeomorphism is obtained by a product of Dehn twists about simple closed curves in $S$, followed by an isotopy. We can apply such a Dehn twist if we also add a surgery curve $C$ that undoes it. Thus, we can replace the knot $K$ and curves $ \beta$ by a knot $K'$ together with the framed surgery curve $C$, and the curves $\beta'$ obtained by Dehn twisting along $C$. By Lemma \ref{Lem:DehnTwist}, the new knot $K'$ and surgery curve $C'$ have total crossing number bounded above by a constant times $c$. (The additive constant in the lemma can be subsumed into the multiplicative constant since we can assume that the total crossing number is non-zero.) By Remark \ref{Rem:ExtraFingerMoves}, we can also ensure that each component of $S - N^\circ(\alpha \cup \beta')$ is a disc, at a cost of increasing the total crossing number of $K' \cup C$ by at most a constant factor. Repeating this for each Dehn twist in the sequence, we end with curves $\beta'$, a diagram for $K$ and the framed link $L$ specifying the surgery. This has total crossing number that is at most $O(c)$. The curves $\beta'$ are isotopic to $\beta''$, and so by Lemma \ref{Lem:Isotopy}, we obtain a planar diagram for $K \cup L$ with total crossing number that is at most $O(c^2)$.
\end{proof}

This completes the proof of Theorem \ref{Thm:HeegaardNP}.

\begin{remark} There is another possible way of representing $M$ and $K$ using triangulations. Fix a 3-manifold $M$ up to homeomorphism. We could be simply given a triangulation $\mathcal{T}$ and a knot $K$ as a subcomplex of $\mathcal{T}$, and we would be told that $\mathcal{T}$ was indeed a triangulation of $M$. But in the absence of an efficient method converting this triangulation to a \emph{fixed} triangulation of $M$, it is hard to see how this could be useful. 
\end{remark}

\bibliographystyle{plain}
\bibliography{knot-genus}

\end{document}